\theoremstyle{plain}
\newtheorem{theorem}{Theorem}[section]
\newtheorem{lemma}{Lemma}[section]
\theoremstyle{definition}
\theoremstyle{remark}
\newtheorem{remark}{Remark}[section]
\numberwithin{equation}{section}
\newcommand{\Div}{\text{div}\hspace{0.5mm}}
\author[H. Frid]{Hermano Frid}
 \address{Instituto de Matem\'atica Pura e Aplicada - IMPA\\ Estrada Dona Castorina, 110\\
Rio de Janeiro, RJ, 22460-320, Brazil}
\email{hermano@impa.br}
\thanks{H.~Frid gratefully acknowledges the support from CNPq, through grant proc.~303950/2009-9, and FAPERJ, through grant E-26/103.019/2011.}
\author[D.~Marroquin]{Daniel R. Marroquin}
 \address{Instituto de Matem\'atica Pura e Aplicada - IMPA\\ Estrada Dona Castorina, 110\\
Rio de Janeiro, RJ, 22460-320, Brazil}
\thanks{D.~Marroquin thankfully acknowledges the support from CNPq, through grants proc. 150118/2018-0 and poc. 140375/2014-7}
\email{danielrm@impa.br}
\author[R.H.~Pan]{Ronghua Pan}
\address{School of Mathematics,
Georgia Institute of Technology,
686 Cherry Street, Skiles Building
Atlanta, GA 30332-0160 }
\thanks{R.~Pan is partly supported by the National Science Foundation under grant  DMS-1516415, and by National Natural Science Foundation of China under grant 11628103.. \\ This work is an outcome from the Special Visiting Researcher program of the project Science Without Borders of the Brazilian government under the proc.~no.~401233/2012.}
 \email{panrh@math.gatech.edu }
\title[ Modeling Aurora Type Phenomena by SW-LW Interactions]{Modeling Aurora Type Phenomena by \\ Short Wave-Long Wave Interactions\\ in Multi-Dimensional Large MHD Flows}
\subjclass[2010]{35Q35, 76A02, 76N10}
\keywords{Compressible MHD system, nonlinear Schr\"{o}dinger equations, time decay rate}
\begin{document}

\begin{abstract}
We establish the convergence of an approximation scheme to a model for aurora type phenomena.  The latter, mathematically, means a system describing the short wave-long wave (SW-LW) interactions  for compressible  magnetohydrodynamic (MHD) flows, introduced in a previous work, which presents short waves, governed by a nonlinear Schr\"o\-dinger (NLS) equation based on the Lagrangian coordinates of the fluid, and  long waves, governed by the  compressible MHD system. The NLS equation and the compressible MHD system are also explicitly coupled by an interaction potential in the NLS equation and an interaction surface force in the momentum equation of the MHD system, both multiplied by a small coefficient.  Since the compressible MHD flow is assumed to have large amplitude data, possibly  forming vacuum, the coefficient of the interaction terms may be taken as zero, due to the large difference in scale between the two types of waves. In this case, the whole coupling lies in the Lagrangian coordinates of the compressible MHD fluid upon which the NLS equation is formulated. However, due to the possible occurrence of vacuum, these Lagrangian coordinates are not well defined, and herein lies the importance of the approximation scheme.  The latter consists of a system that formally approximates the  SW-LW interaction system, including non-zero vanishing interaction coefficients,  together with an artificial viscosity in the continuity equation, an artificial energy balance term, an artificial pressure in the momentum equation and approximate Lagrangian coordinates, which circumvent the possible occurrence of vacuum.   We prove the convergence 
of the solutions of the approximation scheme to a solution of a system consisting of a NLS equation based on the coordinate system induced by the scheme, and a compressible MHD system. 

\end{abstract}

\maketitle

\section{Introduction}

The aim of this paper is to prove the convergence of an approximation scheme for a system of equations modeling short wave-long wave (SW-LW) interactions,  between the magnetohydrodynamics (MHD) equations and a nonlinear Schr\"{o}dinger (NLS) equation. In the model, the NLS equation is coupled to the MHD system along particle paths, meaning that the former is stated in a different coordinate system, namely, the Lagrangian coordinates of the fluid. As such the short wave may be regarded as a small perturbation that propagates along the streamlines of the magnetohydrodynamic medium. This consideration motivates us to view both the system and the approximation scheme as a model to describe and simulate aurora type phenomena. In this connection, we find in the exposition about auroras in the  Wikipedia the following paragraph (see also, e.g., \cite{SSIMOO}): ``Auroras are produced when the magnetosphere is sufficiently disturbed by the solar wind that the trajectories of charged particles in both solar wind and magnetospheric plasma, mainly in the form of electrons and protons, precipitate them into the upper atmosphere (thermosphere/exosphere) due to Earth's magnetic field, where their energy is lost.''

\medskip
The system we are concerned with is the following
\begin{flalign}
&\rho_t+\text{div}(\rho\mathbf{u})= 0,\label{E3rho}\\
&(\rho \mathbf{u})_t + \text{div}(\rho\mathbf{u}\otimes\mathbf{u}) + \nabla p = \alpha \nabla(g'(1/\rho)h(|\psi\circ \mathbf{Y}|^2)\\
&\qquad \qquad\qquad \qquad\qquad\qquad\qquad+(\nabla\times\mathbf{H})\times\mathbf{H} + \Div \mathbb{S},\nonumber\\
&\mathbf{H}_t - \nabla \times(\mathbf{u}\times\mathbf{H})= -\nabla\times(\nu\nabla\times \mathbf{H} ),\\
&\text{div}\hspace{1mm}\mathbf{H}=0,\\
&i\psi_t+\Delta_\mathbf{y} \psi = |\psi|^2\psi + \alpha g(v)h'(|\psi|^2)\psi. \label{E3psi}&&
\end{flalign}

The NLS equation and the compressible MHD system are also explicitly coupled by an interaction potential in the NSL equation, namely, $g(v)h'(|\psi|^2)$, and an interaction surface force in the momentum equation of the MHD system, namely, $\nabla(g'(1/\rho)h(|\psi\circ \mathbf{Y}|^2)$, both multiplied by a small coefficient $\alpha$.  Since the compressible MHD flow is assumed to have large amplitude data, possibly  forming vacuum, we might take $\alpha=0$, due to the large difference in scale between the two types of waves. In this case, the whole coupling lies in the Lagrangian coordinates of the compressible MHD fluid upon which the NLS equation is formulated. However, due to the possible occurrence of vacuum, these Lagrangian coordinates are not well defined, and herein lies the importance of the approximation scheme (see \eqref{app2rho}--\eqref{app2H'}, \eqref{uN}).  The latter consists of a system that formally approximates the  SW-LW interaction system \eqref{E3rho}--\eqref{E3psi}, including the interaction terms with $\alpha>0$,  together with an artificial viscosity in the continuity equation, an artificial energy balance term and an artificial pressure in the momentum equation and approximate Lagrangian coordinates, which circumvent the possible occurrence of vacuum.   The artificial viscosity in the continuity equation together with the artificial energy balance term and the artificial pressure are borrowed from the approximation scheme introduced by Feireisl \cite{Fe} for the Navier-Stokes equations. 

We prove the convergence 
of the approximate solutions, as  $\varepsilon,\alpha \to0$ and $N\to\infty$, to a solution of a system consisting of a NLS equation based on the coordinate system induced by the approximation scheme, and a compressible MHD system, where $\varepsilon$ is the artificial viscosity, $\alpha$ is the interaction coefficient and $N$ is the ``number of harmonics''  in the approximate Lagrangian velocity, $\mathbf{u}^N$.      The convergence of the scheme legitimizes the consideration of the induced coordinates as  generalized Lagrangian coordinates of the fluid. Since we address a boundary value problem on a bounded domain and a cubic NLS, our analysis is carried out in the two-dimensional space. Nevertheless, the same procedure could be carried out in $\mathbb{R}^3$ as long as we truncate the nonlinearity in the NLS equation, that is, we replace $|\psi|^2$ by, say, $\min\{|\psi|^2, R\}$, for some $R>0$ as large as we wish. Accordingly, the lower bound for the adiabatic exponent $\gamma$ instead of 1, in the 2D case, becomes $3/2$ in the 3D case.

In order to explain the terms in appearing in the equations \eqref{E3rho}--\eqref{E3psi} it is worth reviewing briefly the deduction of the model.

Consider the MHD equations describing the dynamics of a compressible isentropic conductive fluid in the presence of a magnetic field
\begin{flalign}
&\rho_t+\text{div}(\rho\mathbf{u})= 0,\label{MHDrho}\\
&(\rho \mathbf{u})_t + \text{div}(\rho\mathbf{u}\otimes\mathbf{u}) + \nabla p = (\nabla\times\mathbf{H})\times\mathbf{H} + \Div \mathbb{S} + \mathbf{F}_{\text{ext}},\label{MHDu}\\
&\mathbf{H}_t - \nabla \times(\mathbf{u}\times\mathbf{H})= -\nabla\times(\nu\nabla\times \mathbf{H} ),\\
&\text{div}\hspace{1mm}\mathbf{H}=0.&&
\end{flalign}

Here, $\rho\geq 0$ and $\mathbf{u}\in\mathbb{R}^3$ denote the fluid's density and velocity field, respectively, and $\mathbf{H}\in\mathbb{R}^3$ the magnetic field; $p$ denotes the pressure, $\mathbf{F}_{\text{ext}}$ accounts for possible external forces and $\mathbb{S}$ is the viscous stress tensor given by
\[
\mathbb{S}=\lambda (\text{div}\hspace{0.5mm}\mathbf{u})\text{Id} + \mu (\nabla \mathbf{u}+(\nabla \mathbf{u})^{\top}).
\]

The viscosity coefficients $\lambda$ and $\mu$ satisfy $2\mu+\lambda>0$ and $\mu>0$; $\nu>0$ is the magnetic diffusivity. 

The pressure, in general, depends on the density through a constitutive relation of the form
\[
p=p(\rho).
\]

The MHD system above is stated in the Eulerian coordinates. The dependent variables are functions of $(\mathbf{x},t)$ where the spatial variable $\mathbf{x}$ belongs to $\mathbb{R}^3$ (or to some domain contained in $\mathbb{R}^3$ that is occupied by the fluid). In the Eulerian variables the motion is described from an outsider point of view. 

As mentioned above, in the model, the nonlinear Schr\"{o}dinger equation is stated in the Lagrangian coordinates of the fluid. The Lagrangian description follows the flow, as if the observer is on a boat following the stream lines. Accordingly the Lagrangian coordinates are characterized by being constant along the streamlines of the fluid and the change of variables can be defined through the flux $\Phi$ associated to the fluid's velocity field $\mathbf{u}$, given by 
\begin{equation}
 \begin{cases}
  \frac{d\Phi}{dt}(t;\mathbf{x})=\mathbf{u}(t,\Phi(t;\mathbf{x})),\\
  \Phi(0;\mathbf{x})=\mathbf{x},
 \end{cases}\label{defflux}
\end{equation}
and the Lagrangian transformation $\mathbf{Y}(t,\mathbf{x})=(\mathbf{y}(\mathbf{x},t),t)$ can be defined by the relation
\begin{equation}
\mathbf{y}(t,\Phi(t;\mathbf{x}))=\mathbf{y}_0(\mathbf{x}),\label{lag}
\end{equation}
where the function $\mathbf{y}_0$ is a diffeomorphism which may be chosen conveniently according to the problem. In particular, from \eqref{defflux} we have that
the Jacobian $J_\Phi(t;\mathbf{x})=\det \left(\frac{\partial\Phi}{\partial\mathbf{x}}(t;\mathbf{x})\right)$ of the transformation $x\mapsto \Phi(t;\mathbf{x})$ satisfies
\begin{equation}
 \frac{dJ_\Phi}{dt}(t;\mathbf{x})=\text{div}\hspace{0.5mm}\mathbf{u}(t,\Phi(t;\mathbf{x}))J_\Phi(t;\mathbf{x}),\label{fluxo}
\end{equation}
\[
 J_\Phi(0;\mathbf{x})=1.
\]
Then, choosing in $\mathbf{R}^d$ (for $d=2$ or $3$)
\begin{equation}
 \mathbf{y}_0(\mathbf{x}):=\big(\int_0^{x_1}\rho_0(s,x_{2},x_3)ds,x_2,\cdots, x_d\big),\label{y0lagrang}
\end{equation}
where, $\rho_0$ is the initial density $\rho_0(\mathbf{x})=\rho(0,\mathbf{x})$, a straightforward calculation shows that \eqref{MHDrho}, \eqref{lag} and \eqref{fluxo} imply that the Jacobian of the change of variables defined as $J_{\mathbf{y}}(t;\mathbf{x}):=\det \left(\frac{\partial\mathbf{y}}{\partial\mathbf{z}}(t,\Phi(t;\mathbf{x}))\right)$ satisfies
\[
\frac{d}{dt}\left( \frac{\rho(t,\Phi(t;\mathbf{x}))}{J_{\mathbf{y}}(t;\mathbf{x})} \right)=0.
\]
That is,
\begin{equation}
\det \left( \frac{\partial \mathbf{y}}{\partial \mathbf{z}} (t,\mathbf{z}) \right)=\rho(t,\mathbf{z}),\label{lagrho}
\end{equation}
for all $(t,\mathbf{z})\in [0,\infty)\times \mathbf{R}^d$.

Note, that this relation implies that the Lagrangian transformation becomes singular in the presence of vacuum or concentration, that is, when the density vanishes or becomes infinity.

Next, we consider the following nonlinear Schr\"{o}dinger equation stated in the Lagragian coordinates
\begin{equation}
i\psi_t+\Delta_\mathbf{y} \psi = |\psi|^2\psi + G\psi, \label{Schro}
\end{equation}
where $\psi=\psi(\mathbf{y},t)$ is the complex valued wave function, $G$ is a real valued function corresponding to a potential due to external forces and $\mathbf{y}$ is the Lagrangian coordinate as defined above.

Finally, the Short Wave-Long Wave interactions are modelled by choosing the external force term $\mathbf{F}_{\text{ext}}$ in \eqref{MHDu} and the potential term $G$ in \eqref{Schro} as
\begin{equation}
 \mathbf{F}_{\text{ext}}=\alpha \nabla(g'(1/\rho)h(|\psi\circ \mathbf{Y}|^2)),\hspace{10mm}G=\alpha g(v)h'(|\psi|^2),\label{force}
\end{equation}
where the interaction coefficient $\alpha$ is a positive constant, $\mathbf{Y}(t,\mathbf{x})=(t,\mathbf{y}(t,\mathbf{x}))$ is the Lagrangian transformation as before, $v(t,\mathbf{y})$ is the \textit{specific volume} given by the relation
\begin{equation}
 v(t,\mathbf{y}(t,\mathbf{x}))=\frac{1}{\rho(t,\mathbf{x})},\label{SV}
\end{equation}
and $g,h:[0,\infty)\to[0,\infty)$ are nonnegative smooth functions.

The most important feature of this coupling is that it is endowed with an energy identity, which can be stated in differential form as
\begin{flalign}
 &\Big\{ \big( \rho( \frac{1}{2}|\mathbf{u}|^2 + e )+\frac{1}{2}|\mathbf{H}|^2 \big)_t + \big(\mu |\nabla_\mathbf{x} \mathbf{u}|^2+(\lambda+\mu)(\text{div}_\mathbf{x}\mathbf{u})^2 + \nu |\nabla_\mathbf{x}\times \mathbf{H}|^2\big) \label{difE}\\
 &\hspace{15mm} + \text{div}_{\mathbf{x}}\big( \mathbf{u}(\rho(\frac{1}{2}|\mathbf{u}|^2 + e) + p+\alpha g'(1/\rho)h(|\psi\circ \mathbf{Y}|^2)) \big)\nonumber \\
 &\hspace{30mm}- \text{div}_{\mathbf{x}}\big(\mathbb{S}\cdot \mathbf{u} +  (\mathbf{u}\times\mathbf{H})\times \mathbf{H} +  \mathbf{H} \times \nu(\nabla_{\mathbf{x}}\times \mathbf{H} )\big)\Big\} d\mathbf{x}\nonumber \\
 &\hspace{6mm}= -\Big\{ \big( \frac{1}{2}|\nabla_{\mathbf{y}}\psi(t,\mathbf{y})|^2 + \frac{1}{4}|\psi(t,\mathbf{y})|^4 + \alpha g(v(t,\mathbf{y})) h(|\psi(t,\mathbf{y})|^2 \big)_t \nonumber\\
 &\hspace{65mm}- \text{div}_{\mathbf{y}}(\overline{\psi}_t\nabla_{\mathbf{y}}\psi + \psi_t\nabla_{\mathbf{y}}\overline{\psi})\Big\}d\mathbf{y},&& \nonumber
\end{flalign}
where, $e=e(\rho)$ is the internal energy given by
\[
e(\rho):=\int^\rho \frac{p(s)}{s^2}ds.
\]

Indeed, the usual energy identity for the isentropic MHD equations reads
\begin{flalign}
 & \big( \rho( \frac{1}{2}|\mathbf{u}|^2 + e )+\frac{1}{2}|\mathbf{H}|^2 \big)_t + \big(\mu |\nabla \mathbf{u}|^2+(\lambda+\mu)(\Div\mathbf{u})^2 + \nu |\nabla\times \mathbf{H}|^2\big)  \label{MHDE}\\
 & + \text{div}\big( \mathbf{u}(\rho(\frac{1}{2}|\mathbf{u}|^2 + e) + p) \big)- \text{div}\big(\mathbb{S}\cdot \mathbf{u} +  (\mathbf{u}\times\mathbf{H})\times \mathbf{H} +  \mathbf{H} \times \nu(\nabla\times \mathbf{H} )\big)\nonumber\\
 &\hspace{6mm}= \mathbf{F}_{\text{ext}}\cdot \mathbf{u}.&&\nonumber
\end{flalign}
In our particular situation $\mathbf{F}_\text{ext}$ is given by \eqref{force}, so that
\[
 \mathbf{F}_{\text{ext}}\cdot \mathbf{u}=\alpha \text{div}(g'(1/\rho)h(|\psi\circ \mathbf{Y}|^2)\mathbf{u})-\alpha g'(1/\rho)h(|\psi\circ \mathbf{Y}|^2)\text{div}\mathbf{u}.
\]
Multiplying (\ref{E3rho}) by $-(1/\rho)\alpha g'(1/\rho)h(|\psi\circ\mathbf{Y}|^2)$ we deduce that
\[
 -\alpha g'(1/\rho)h(|\psi\circ\mathbf{Y}|^2)\text{div}\hspace{0.5mm}\mathbf{u} = -\alpha(g(1/\rho)_t+\mathbf{u}\cdot \nabla_{\mathbf{x}} g(1/\rho))h(|\psi\circ\mathbf{Y}|^2)\rho.
\]

Observe that from the definition of $\mathbf{Y}$ we have the conversion formula between Eulerian and Lagrangian coordinates:
\[
 \beta(t,\mathbf{y})_t=(\beta\circ\mathbf{Y}(t,\mathbf{x}))_t+\mathbf{u}\cdot\nabla_{\mathbf{x}}(\beta\circ\mathbf{Y}(t,\mathbf{x})),
\]
or synthetically,
\[
 \beta(t,\mathbf{y})_t=\beta_t(t,\mathbf{x})+\mathbf{u}\cdot\nabla_{\mathbf{x}}\beta(t,\mathbf{x}).
\]

Keeping in mind the previously deduced formula \eqref{lagrho} for the Jacobian of the Lagrangian transformation synthesized by the identity $d\mathbf{y}=\rho(t,\mathbf{x})d\mathbf{x}$, we multiply equation (\ref{E3psi}) by $\overline{\psi}_t$ (the complex conjugate of $\psi_t$), take real part and incorporate the definition of $G$ to obtain
\begin{align*}
 &-\alpha(g(1/\rho)_t+\mathbf{u}\cdot \nabla_{\mathbf{x}} g(1/\rho))h(|\psi\circ\mathbf{Y}|^2)\rho\hspace{0.5mm}d\mathbf{x} \\
 &\hspace{5mm}= -\alpha g(v(t,\mathbf{y}))_t \hspace{0.5mm}h(|\psi(t,\mathbf{y})|^2)\hspace{0.5mm}d\mathbf{y}\\
 &\hspace{5mm}=-\alpha \left\{\Big(g(v(t,\mathbf{y})) h(|\psi(t,\mathbf{y})|^2)\Big)_t - g(v(t,\mathbf{y})) \hspace{0.5mm}h(|\psi(t,\mathbf{y})|^2)_t \right\}d\mathbf{y}\\
 &\hspace{5mm}=-\Big\{ \big( \frac{1}{2}|\nabla_{\mathbf{y}}\psi(t,\mathbf{y})|^2 + \frac{1}{4}|\psi(t,\mathbf{y})|^4 + \alpha g(v(t,\mathbf{y})) h(|\psi(t,\mathbf{y})|^2 \big)_t \nonumber\\
 &\hspace{65mm}- \text{div}_{\mathbf{y}}(\overline{\psi}_t\nabla_{\mathbf{y}}\psi + \psi_t\nabla_{\mathbf{y}}\overline{\psi})\Big\}d\mathbf{y}.
\end{align*}

Putting all of this information together and replacing it in the energy identity (\ref{MHDE}) we arrive at \eqref{difE}.

In particular, under suitable integrability conditions, this identity yields an integral form of the conservation of energy:
\begin{align*}
 &\frac{d}{dt}\int \left( \rho\left( \frac{1}{2}|\mathbf{u}|^2 + e \right)+\frac{1}{2}|\mathbf{H}|^2 \right)d\mathbf{x} \\
 &+ \int \big(\mu |\nabla_\mathbf{x} \mathbf{u}|^2+(\lambda+\mu)(\text{div}_\mathbf{x}\mathbf{u})^2 + \nu |\nabla_\mathbf{x}\times \mathbf{H}|^2\big) d\mathbf{x}\\
 &+\frac{d}{dt}\int \left( \frac{1}{2} |\nabla_{\mathbf{y}}\psi(t,\mathbf{y})|^2  + \frac{1}{4}|\psi(t,\mathbf{y})|^4 + \alpha g(v(t,\mathbf{y})) h(|\psi(t,\mathbf{y})|^2) \right)d\mathbf{y}=0.
\end{align*}

This kind of coupling was first studied in 2011 by Dias and Frid in \cite{DFr} where, inspired by the work of Benney on short wave-long wave interactions in [5], they proposed a similar model consisting
of a coupling between the Navier-Stokes equations for a compressible isentropic (non-heat conductive) fluid and a nonlinear Schr\"{o}dinger equation, studying existence and uniqueness of global solutions and the problem of vanishing viscosity and interaction coefficient limit in the one space dimensional context.

Later, in 2014, Frid, Pan and Zhang included the thermal description and addressed the problem of global existence of smooth solutions to the Cauchy problem, when the initial data are smooth small perturbations of an equilibrium state; this time in the full three dimensional case (see \cite{FrPZ}).

More recently, in 2016, Frid, Jia and Pan, included the magnetic description, obtaining the model above, and showed existence, uniqueness and decay rates of smooth solutions for small initial data, also in the three-dimensional context (\cite{FrJP}).

Our main goal here is to study the initial-boundary value problem for the multidimensional case with large data. The main difficulty that we face in this setting is the possible occurrence of vacuum. As the Lagrangian transformation becomes singular in the presence of vacuum an effective coupling of the fluid equations with the nonlinear Schr\"{o}dinger equation can not be made in a straightforward way. 

In order to overcome these difficulties, we define the interaction through a regularized system that provides a good definition for an approximate Lagrangian coordinate. Then, after showing existence of solutions, we show convergence of the sequence to a solution of the limit decoupled system as the regularizing parameters vanish together with the interaction coefficient at a specific rate, thus making sense of the Short Wave-Long wave interactions in the limit process. Although the limit Schr\"{o}dinger equation, when the interaction coefficient $\alpha$ is equal to zero, is apparently decoupled from the MHD system, it is stated in a coordinate system associated to the limit velocity field through the limit process.

For simplicity, we focus on the isentropic case, that is, the case of a non heat-conductive fluid. 

Let us remark that the results that we present here hold in a smooth bounded open spatial domain in $\mathbb{R}^2$. The only restriction that does not allow us to proceed in the full three dimensional one comes from the lack of global solvability of the {\em cubic} nonlinear Schr\"{o\-}dinger equation in a bounded domain of $\mathbb{R}^3$. However, should this be shown to hold our methods can be adapted to the three dimensional case. We also remark that the same procedure could be carried out in $\mathbb{R}^3$ as long as we truncate the nonlinearity in the NLS equation, that is, we replace $|\psi|^2$ by, say, $\min\{|\psi|^2, R\}$, for some $R>0$ as large as we wish. Accordingly, the lower bound for the adiabatic exponent $\gamma$ instead of 1, in the 2D case, becomes $3/2$ in the 3D case.

\subsection{Regularized problem}\label{reg}

In this subsection we introduce our regularized system that allows us to model the short wave-long wave interactions allowing for large initial data. Unfortunately, as already mentioned, a technical difficulty, related to the global solvability of the  cubic nonlinear Schr\"{o}dinger equation in a bounded domain of $\mathbb{R}^3$, prevents us to proceed in the full three dimensional setting. Thus our analysis is limited to the two dimensional case. 

Let us consider the two-dimensional system for an isentropic magnetohydrodynamic flow. The two-dimensional MHD equations are deduced from the full three-dimensional ones under the assumption that all the involved functions are independent of the third variable. Accordingly, we assume that our state variables $\rho\geq 0$, $\mathbf{u}\in \mathbb{R}^3$ and $\mathbf{H}\in\mathbb{R}^3$ are functions of $(\mathbf{x},t)\in\Omega\times [0,T]$, with $\Omega$ a smooth bounded domain of $\mathbb{R}^2$ and $T>0$ arbitrary.  Appealing to some abuse of notation and keeping in mind that the partial derivatives of the involved functions with respect to the third variable are zero, we can write the two-dimensional system exactly as the full three dimensional one as follows
\begin{flalign}
 &\rho_t+\text{div}(\rho\mathbf{u})=0,&\label{MHD2rho}\\
 &(\rho \mathbf{u})_t + \Div(\rho\mathbf{u}\otimes\mathbf{u}) + \nabla p = \Div\mathbb{S} + \left(\nabla\times\mathbf{H} \right)\times\mathbf{H} +\mathbf{F}_{\text{ext}},&\label{MHD2u}\\
&\mathbf{H}_t+\text{curl}\left(\nu\text{curl}\left( \mathbf{H} \right)\right)=\text{curl}(\mathbf{u}\times\mathbf{H}),&\label{MHD2B}\\
&\text{div}\hspace{1mm}\mathbf{H}=0,&\label{MHD2B'}
%& i\hspace{0.5mm}\psi_t+\Delta_{\mathbf{y}}\psi=|\psi|^2\psi+\tilde{\alpha}g(v)h'(|\psi|^2)\psi.&
\end{flalign}
where, as before,
\[
\mathbb{S}=\lambda (\text{div}\hspace{0.5mm}\mathbf{u})\text{Id} + \mu (\nabla \mathbf{u}+(\nabla \mathbf{u})^{\top}).
\]

Regarding the pressure $p$, we assume that it is given by a $\gamma$-law, that is,
\[
p(\rho)=a\rho^\gamma,
\]
for some $a>0$ and $\gamma>1$. 

Since we allow for large initial data, we work with weak solutions. As a result, the Lagrangian transformation as defined before may become singular due to the possible occurrence of vacuum in finite time. 

In order to work around the lack of regularity of the density, we first add an artificial viscosity to the continuity equation \eqref{MHD2rho}. Fix $\varepsilon >0$ and $\delta>0$ and consider the following regularized system
\begin{flalign}
&\rho_t+\text{div}(\rho\mathbf{u})= \varepsilon \Delta \rho,\label{app2rho}\\
&(\rho \mathbf{u})_t + \text{div}(\rho\mathbf{u}\otimes\mathbf{u}) + \nabla (a\rho^\gamma+\delta\rho^\beta) + \varepsilon \nabla\mathbf{u}\cdot\nabla\rho\label{app2u} \\
&\hspace{45mm}=\Div\mathbb{S}+(\nabla\times\mathbf{H})\times\mathbf{H}+\mathbf{F}_{\text{ext}},\nonumber\\
&\mathbf{H}_t - \nabla \times(\mathbf{u}\times\mathbf{H})= -\nabla\times(\nu\nabla\times \mathbf{H} ),\label{app2H}\\
&\text{div}\hspace{1mm}\mathbf{H}=0.\label{app2H'}&&
\end{flalign}

Note that besides the artificial viscosity added to the continuity equation, two new terms appeared in the momentum equation (\ref{MHD2u}). The term $\delta \rho^\beta$, where $\beta >1$, acts as an artificial pressure and is intended to provide better estimates on the density, whereas the term $\varepsilon \nabla\mathbf{u}\cdot\nabla\rho$ is set to equate the unbalance in the energy estimates of the MHD equations caused by the introduction of the artificial viscosity. This approximate system resembles the one employed by Hu and Wang in \cite{HW} where they study the existence of weak solutions to the three dimensional MHD equations. A similar approximation was introduced by Feireisl, et al. in \cite{FeNP} in the study of the Navier-Stokes equations, who, in turn, followed the pioneering ideas by P.-L. Lions in \cite{Li}. Recall that $\varepsilon$ and $\delta$ are small positive constants and the analysis that we intend to develop will provide insights that justify the accuracy to which this regularized model approximates the desired SW-LW interaction.

Now, as it turns out, even in this regularized setting the velocity field might not be smooth enough to provide a good enough definition of Lagrangian transformation that we can work with. More specifically, in the present situation there is no a priori bound available for the Jacobian of the Lagrangian transformation, as it depends on the $L^\infty$ norm of $\Div \mathbf{u}$. For this reason we replace the velocity by a suitable smooth approximation $\mathbf{u}^N$ (which tends to $\mathbf{u}$ as $N\to \infty$) in the definition of the Lagrangian transformation. Thus obtaining an approximate Lagrangian coordinate defined as before with $\mathbf{u}$ replaced by $\mathbf{u}^N$.

In order to define such an approximation of the velocity we consider the following subspaces of $L^2(\Omega)$. For each $n\in\mathbb{N}$ consider the space $X_n\subseteq L^2(\Omega;\mathbb{R}^3)$ defined as
\[
X_n:=E_n\times E_n \times E_n,
\]
where, $E_n= \text{span}\{ \eta_j:j=1,...,n\}$ and $\eta_1,\eta_2,\cdots$ is the complete collection of normalized eigenvectors of the Laplacian with zero boundary condition in $\Omega$;
with respective projection
\[
P_n:L^2(\Omega)\to X_n,
\]
With this notation, given $N\in\mathbb{N}$ we define $\mathbf{u}^N$ as
\begin{equation}
\mathbf{u}^N=P_N \mathbf{u}.\label{uN}
\end{equation}

Note that for any $\mathbf{u}(\mathbf{x},t)$ that satisfies $\mathbf{u}(\cdot,t)\in L^2(\Omega)$ for a.e. $t$, $\mathbf{u}^N$ thus defined is smooth and can be written as
\begin{equation}
\mathbf{u}^N(\mathbf{x},t) = \sum_{j=1}^N \mathbf{u}_j^N(t) \eta_j(\mathbf{x}),\label{UNcoef}
\end{equation}
for some vector valued coefficients $\mathbf{u}_j^N(t)$, $j=1,\cdots,N$; and satisfies,
\begin{equation}
||\mathbf{u}_N(t)||_{L^2(\Omega)}=\left(\sum_{j=1}^N |\mathbf{u}_j^N(t)|^2\right)^{1/2}.
\end{equation}
In fact, in light of \eqref{UNcoef} we have that
\begin{equation}
||\nabla \mathbf{u}^N||_{L^\infty(\Omega)}\leq C_N ||\mathbf{u}^N||_{L^2(\Omega)}\leq C_N ||\mathbf{u}||_{L^2(\Omega)},\label{nablauN}
\end{equation}
where 
\begin{equation}
C_N:=N \max_{j=1,\cdots,N}||\nabla \eta_j||_{L^\infty(\Omega)}.\label{CN}
\end{equation}

With this in mind, we define the Lagrangian transformation $Y(t,\mathbf{x})=Y(t,\mathbf{y}(t,\mathbf{x}))$ through (\ref{defflux}), (\ref{lag}) with the fluid's velocity $\mathbf{u}$ replaced by $\mathbf{u}^N$. Recall that we have a certain flexibility in the choice of the function $\mathbf{y}_0$. In the previous Section we chose it in terms of the initial density as it yielded a convenient expression for the Jacobian of the Lagrangian transformation, namely \eqref{y0lagrang}.

In the present situation, however, as we allow for vacuum, even in the initial data, we go another direction and choose 
\[
\mathbf{y}_0(\mathbf{x})=\mathbf{x}.
\]

With this choice for the initial diffemorphism, we see that for every $t\geq 0$ the coordinate change is a diffeomorphism from $\Omega$ into itself as well, and this holds for any $N$. This is due to the zero boundary conditions satisfied by each approximate velocity field $\mathbf{u}^N$.

With these modifications we now have a smoothed Lagrangian coordinate. Nonetheless, with the new definition we lose relation (\ref{lagrho}) and instead we have
\begin{equation}
J_\mathbf{y}(t)=\exp\left[-\int_0^t \text{div}\hspace{1mm}u^N(s,\Phi(s,x))ds\right].
\end{equation}
Note that, by Poincar\'{e}'s inequality, \eqref{nablauN} implies
\begin{align}
\exp \left[-C_N\left(t+\int_0^t||u(s)||_{H_0^1(\Omega)}^2ds\right)\right]&\leq J_\mathbf{y}(t)\nonumber\\
&\leq \exp \left[C_N\left(t+\int_0^t||u(s)||_{H_0^1(\Omega)}^2ds\right)\right], \label{JY}
\end{align}
provided that $\mathbf{u}\in L^2(0,T;H_0^1(\Omega))$, which is to be expected for the kind of solutions that we work with.

Now that we have a Lagrangian coordinate we can talk about the SW-LW interactions. To that end, we consider the following nonlinear Schr\"{o}dinger equation stated in the newly defined Lagrangian coordinates
\begin{equation}
i\psi_t+\Delta_\mathbf{y} \psi = |\psi|^2\psi + G\psi,
\end{equation} 
where $\psi$ is the complex valued wave function and $G$ is a real valued function corresponding to a potential. In order to complete our regularized model we have to define the coupling terms through the external force term $\mathbf{F}_{\text{ext}}$ in \eqref{MHD2u} and the potential $G$. As before we choose $G$ as
\begin{equation}
G=\alpha g(v)h'(|\psi|^2).
\end{equation}
Regarding $\mathbf{F}_{\text{ext}}$ we choose 
\begin{equation}
\mathbf{F}_{\text{ext}}=\alpha\nabla\left(\frac{J_\mathbf{y}}{\rho}g'(1/\rho)h(|\psi\circ \mathbf{Y}|^2)\right),
\end{equation}
where, as before, $g,h:[0,\infty)\to[0,\infty)$ are nonnegative smooth functions and we demand that 
\begin{equation}
\begin{cases}
 g(0)=h(0)=0,\\
 \text{supp}\hspace{.5mm} g' \text{ compact in }(0,\infty),\\
 \text{supp}\hspace{.5mm} h' \text{ compact in } [0,\infty).
\end{cases}\label{gh}
\end{equation}

Note that this coincides with our previous choice \eqref{force} once we realize that in our original model we had that $J_\mathbf{y} = \rho$. Also observe that, although vacuum is permitted in our new model, the fact that $g$ is compactly supported in $(0,\infty)$ clarifies any ambiguity in the definition of $\mathbf{F}_{\text{ext}}$.

As a result we end up with the following system of equations:
\begin{flalign}
&\rho_t+\text{div}(\rho\mathbf{u})= \varepsilon \Delta \rho,\label{appE2rho}\\
&(\rho \mathbf{u})_t + \text{div}(\rho\mathbf{u}\otimes\mathbf{u}) + \nabla (a\rho^\gamma+\delta\rho^\beta) + \varepsilon \nabla\mathbf{u}\cdot\nabla\rho\label{appE2u} \\
&\hspace{20mm}=\nabla(\alpha \frac{J_\mathbf{y}}{\rho} g'(1/\rho)h(|\psi\circ \mathbf{Y}|^2)) +(\nabla\times\mathbf{H})\times\mathbf{H} + \Div\mathbb{S},\nonumber\\
&\mathbf{H}_t - \nabla \times(\mathbf{u}\times\mathbf{H})= -\nabla\times(\nu\nabla\times \mathbf{H} ),\label{appE2H}\\
&\text{div}\hspace{1mm}\mathbf{H}=0.\label{appE2H'}\\
&i\psi_t+\Delta_\mathbf{y} \psi = |\psi|^2\psi + \alpha g(v)h'(|\psi|^2)\psi, &&\label{appE2Scho}
\end{flalign}

Regarding this new system, we prove the existence of solutions on a time interval $[0,T^N]$, where $T^N$ depends on $\varepsilon$, $\alpha$ and $N$. After this, we show the convergence of the approximate solutions when the artificial viscosity $\varepsilon$ together with the interaction coefficients $\alpha$ tend to zero and as $N$ tends to infinity at a specific rate at which $T^N$ tends to infinity.  Then, we make $\delta$ tend to zero and show convergence (on an arbitrary time interval $[0,T]$) to a solution of the system formed by the MHD equations together with the decoupled nonlinear Schr\"{o}dinger equation. In other words, we find a solution to the limit decoupled system, consisting of the MHD equations and a nonlinear Schr\"{o}dinger equation, as the limit of a sequence of solutions of the regularized SW-LW interactions system.

As emphasized before, the proposed approximation scheme has the purpose to legitimize the coordinates of the limiting Schr\"{o}dinger equation to be considered as the Lagrangian coordinates of the fluid in a generalized sense.

We consider the initial-boundary value problem for system \eqref{appE2rho}-\eqref{appE2Scho} with initial data
\begin{equation}
(\rho,\rho \mathbf{u},\mathbf{H})(\mathbf{x},0)=(\rho_0,\mathbf{m}_0,\mathbf{H}_0)(\mathbf{x}),\hspace{10mm}\psi(y,0)=\psi_0(\mathbf{y}),\label{appE20}
\end{equation}
where $\mathbf{m}_0$ is the initial momentum. Again, as vacuum is possible, it is better to regard the initial data in terms of the momentum instead of the velocity field.

With respect to the boundary conditions we demand that
\begin{equation}
(\nabla\rho\cdot \mathbf{n},\mathbf{u},\mathbf{H})|_{\partial\Omega}=0,\hspace{10mm}\psi|_{\partial \Omega_\mathbf{y}}=0.\label{appE2bound}
\end{equation}

Note that a Neumann boundary condition was added for the density as a result of the introduction of the artificial viscosity in the continuity equation.

With this notation we can state our first main result as follows.

\begin{theorem}\label{regsys}
Let $T>0$ be given and $N\in\mathbb{N}$ be fixed. Suppose that the initial data is smooth and that
\begin{equation}
M_0^{-1}\leq \rho_0 \leq M_1,
\end{equation}
for some positive constants $M_0$ and $M_1$. Assume, further, that $\beta$ is big enough. Then, if $\varepsilon$ and $\alpha$ are small and satisfy $\frac{\varepsilon^2}{\alpha}\gg 1$, there exists a solution $(\rho,\mathbf{u},\mathbf{H},\psi)$ of \eqref{appE2rho}-\eqref{appE2Scho} with initial and boundary conditions \eqref{appE20}, \eqref{appE2bound}. Moreover there is some $1<r<2$, independent of $N$, $\varepsilon$, $\alpha$ and $\delta$ such that
\begin{enumerate}
\item $\rho$ is nonnegative and
 \begin{equation}
 \rho\in L^r(0,T;W^{2,r}(\Omega))\cap L^{\beta+1}(\Omega\times(0,T)),\hspace{10mm} \rho_t\in L^r(\Omega\times(0,T));
 \end{equation}
\item $\mathbf{u},\mathbf{H}\in L^2(0,T;H_0^1(\Omega))$; 
\item $\psi\in L^\infty(0,T;H_0^1(\Omega))$
\item the initial and boundary conditions are satisfied in the sense of traces.
\end{enumerate}

Furthermore, we have that
\begin{equation}
E(t)+\varepsilon \int_0^t\int_\Omega(a\gamma\rho^{\gamma-2}+\delta\beta\rho^{\beta-2})|\nabla \rho|^2d\mathbf{x}\hspace{.5mm}ds \leq E(0)+\varepsilon^{1/2} R,\label{regnergy}
\end{equation}
for a.e. $t\in [0,T]$, where
\begin{flalign}
&E(t)=\int_\Omega\left( \frac{1}{2}\rho|\mathbf{u}|^2 + \frac{a}{\gamma-1}\rho^\gamma +\frac{\delta}{\beta-1}\rho^\beta+\frac{1}{2}|\mathbf{H}|^2\right)d\mathbf{x}&\nonumber\\
&\hspace{20mm}+\int_{\Omega_\mathbf{y}}\left( \frac{1}{2}|\nabla_\mathbf{y}\psi|^2 + \frac{1}{4}|\psi|^4 + \alpha g(v)h(|\psi|^2)\right)d\mathbf{y}&\nonumber\\
&\hspace{30mm}+\int_0^t\int_\Omega (\mu |\nabla \mathbf{u}|^2 + (\lambda+\mu)(\text{div}\hspace{.5mm}\mathbf{u})^2+\nu|\nabla\mathbf{H}|^2)d\mathbf{x}ds,\label{defregEt}
\end{flalign}
and
\[
R:=\varepsilon ||\rho_0||_{W^{2,r}(\Omega)} + ||\rho_0||_{H^1(\Omega)}^2 + E(0) + 1.
\]
\end{theorem}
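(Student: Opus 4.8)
My plan is to build the solution through a two-level approximation. At the inner level I fix a Galerkin dimension $n$ (with $n\ge N$) and seek the velocity in the finite-dimensional space $X_n$; at the outer level I pass $n\to\infty$. Throughout, $N$, $\varepsilon$, $\alpha$, $\delta$ remain frozen, since the theorem asserts existence for each fixed choice. The heart of the inner level is a Schauder fixed-point argument. Given a candidate velocity $\mathbf{u}\in C([0,T^*];X_n)$ (so that $\mathbf{u}^N=P_N\mathbf{u}$ is automatically smooth in $\mathbf{x}$ with the bounds \eqref{nablauN}), I would solve, in order: (i) the parabolic continuity equation \eqref{appE2rho} with the Neumann condition for $\rho=\rho[\mathbf{u}]$; (ii) the flow system \eqref{defflux} driven by $\mathbf{u}^N$ for the Lagrangian map $\mathbf{Y}$ and its Jacobian $J_\mathbf{y}$; (iii) the cubic Schr\"odinger equation \eqref{appE2Scho} for $\psi=\psi[\mathbf{u}]$ in the induced coordinates; (iv) the linear parabolic induction equation \eqref{appE2H}--\eqref{appE2H'} for $\mathbf{H}=\mathbf{H}[\mathbf{u}]$; and finally (v) the Galerkin projection of the momentum equation \eqref{appE2u}, whose right-hand side now depends on $\rho$, $\mathbf{H}$, $\mathbf{Y}$, $\psi$ through the interaction force, to produce a new velocity $\tilde{\mathbf{u}}$. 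The fixed points of $\mathbf{u}\mapsto\tilde{\mathbf{u}}$ are the Galerkin solutions.

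For each building block I would invoke standard theory. In step (i), since $\mathbf{u}\in X_n$ has $\mathrm{div}\,\mathbf{u}\in L^\infty$, linear parabolic theory gives $\rho\in L^r(0,T;W^{2,r})$ with $\rho_t\in L^r$, and the maximum principle propagates $M_0^{-1}\le\rho_0\le M_1$ into time-dependent two-sided bounds $\underline{m}(t)\le\rho\le\overline{m}(t)$ that stay strictly positive on $[0,T^*]$; this positivity is what legitimizes $g'(1/\rho)$ and the specific volume. In step (ii) the bounds \eqref{JY} hold by Poincar\'e's inequality once $\mathbf{u}\in L^2(0,T;H_0^1)$, and $t\mapsto\mathbf{Y}(t,\cdot)$ is a diffeomorphism of $\Omega$ by the zero boundary condition on $\mathbf{u}^N$. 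Step (iii) is the one place where two dimensions is essential: the defocusing cubic NLS on a bounded planar domain with Dirichlet data is globally well-posed in $H_0^1$, with conservation of mass and of the Schr\"odinger energy controlling $\|\psi\|_{L^\infty(0,T;H_0^1)}$, the extra potential $\alpha g(v)h'(|\psi|^2)$ being a bounded smooth perturbation by \eqref{gh}. Step (iv) is linear parabolic. The continuity, compactness, and self-map properties needed for Schauder follow from these estimates, and local existence on $[0,T^*]$ is upgraded to $[0,T]$ once the uniform energy bound below is in hand.

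The energy estimate \eqref{regnergy} I would derive as the regularized analogue of the differential identity \eqref{difE}: test \eqref{appE2u} by $\mathbf{u}$, test the induction equation by $\mathbf{H}$, multiply \eqref{appE2Scho} by $\overline{\psi}_t$ and take the real part (using $d\mathbf{y}=J_\mathbf{y}\,d\mathbf{x}$ to transfer the coupling as in the derivation of \eqref{difE}), and treat the pressure and kinetic terms by pairing the continuity equation with $\tfrac12|\mathbf{u}|^2$ and with the pressure potential $P(\rho)=\tfrac{a}{\gamma-1}\rho^\gamma+\tfrac{\delta}{\beta-1}\rho^\beta$. The artificial viscosity $\varepsilon\Delta\rho$, paired against $P$, yields the good dissipation $\varepsilon\int(a\gamma\rho^{\gamma-2}+\delta\beta\rho^{\beta-2})|\nabla\rho|^2$ appearing on the left of \eqref{regnergy}, while the correction term $\varepsilon\nabla\mathbf{u}\cdot\nabla\rho$ in \eqref{appE2u} is precisely what cancels the unbalanced kinetic contribution produced by the same viscosity, as in Feireisl's scheme. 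The terms surviving this cancellation, together with the interaction force $\alpha\nabla(J_\mathbf{y}\rho^{-1}g'(1/\rho)h(|\psi\circ\mathbf{Y}|^2))$ tested against $\mathbf{u}$, are absorbed by Young's inequality into the remainder $\varepsilon^{1/2}R$; here the hypotheses enter, the condition $\varepsilon^2/\alpha\gg1$ being exactly what lets the interaction contribution be absorbed into the $\varepsilon$-weighted density dissipation and the $\varepsilon^{1/2}R$ term, while ``$\beta$ big enough'' secures $\rho\in L^{\beta+1}$ and lets the artificial pressure dominate. Crucially, the resulting constants are independent of $N$, $\varepsilon$, $\alpha$, $\delta$, as claimed.

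Finally, for the passage $n\to\infty$, the energy bound gives $\mathbf{u},\mathbf{H}$ bounded in $L^2(0,T;H_0^1)$, $\psi$ in $L^\infty(0,T;H_0^1)$, and $\rho$ in $L^r(0,T;W^{2,r})\cap L^{\beta+1}$ with $\rho_t\in L^r$. Parabolic regularity of the continuity equation gives strong convergence of $\rho$ by Aubin--Lions, which is what is needed to pass to the limit in the pressure and in the convective and interaction nonlinearities; $\mathbf{H}$ converges strongly by its own parabolic smoothing, and $\psi$ by Aubin--Lions in the Lagrangian frame. I expect the main obstacle to be the continuity and compactness of the fixed-point map, because $\rho$, $\mathbf{Y}$ and $\psi$ all depend on the entire time history of $\mathbf{u}$ through the flow \eqref{defflux}; controlling how $\psi$, solved in a coordinate system that is itself a nonlinear functional of $\mathbf{u}$, depends continuously on $\mathbf{u}$, and transferring the interaction force back to Eulerian variables with the regularity the momentum equation requires, is the delicate point. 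The energy bookkeeping that yields the sharp $\varepsilon^{1/2}R$ remainder under $\varepsilon^2/\alpha\gg1$ is the second place demanding care.
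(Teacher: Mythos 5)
Your scaffolding (Faedo--Galerkin in $X_n$, Schauder fixed point for the velocity after solving $\rho[\mathbf{u}]$, $\mathbf{H}[\mathbf{u}]$, $\psi[\mathbf{u}]$ in terms of it, energy identity obtained by testing the momentum equation with $\mathbf{u}$ and the NLS equation with $\overline{\psi}_t$, then $n\to\infty$) coincides with the paper's. The gap is in the heart of the energy estimate. You claim the interaction force, tested against $\mathbf{u}$, is ``absorbed by Young's inequality'' into $\varepsilon^{1/2}R$ with constants independent of $N$. This cannot work as stated. After converting the interaction term via the continuity equation and the NLS identity (exactly as you propose), what survives is not a Young-absorbable remainder but the error term
\begin{equation*}
\int_\Omega \alpha\, g'(1/\rho_n)\, h(|\psi_n\circ\mathbf{Y}|^2)\, J_\mathbf{y}\left( \varepsilon\frac{\Delta\rho_n}{\rho_n^2} + (\mathbf{u}_n^N - \mathbf{u}_n)\cdot\frac{\nabla\rho_n}{\rho_n^2}\right) d\mathbf{x},
\end{equation*}
produced by the artificial viscosity and by the mismatch between the Eulerian velocity $\mathbf{u}_n$ and the velocity $\mathbf{u}_n^N$ that generates the coordinates (your sketch does not notice this mismatch term at all). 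The only available bound on the Jacobian is \eqref{JY}, which grows like $\exp\bigl(C_N(t+\mu^{-1}\int_0^t\|\mathbf{u}_n\|^2_{H_0^1})\bigr)$ with $C_N\to\infty$ as $N\to\infty$; no smallness of $\alpha$ fed through Young's inequality can beat this factor uniformly in time. The paper's mechanism is different: one restricts to $t\le T^N$ with $T^N$ defined by \eqref{TN}, makes the bootstrap assumption \eqref{bootstrap} on $\mu\int_0^t\|\nabla\mathbf{u}_n\|^2$, deduces $\alpha\sup_{[0,T^N]}|J_\mathbf{y}|\le\varepsilon^2$, and only then absorbs the $\varepsilon^2$-prefactored terms into the viscous and density dissipations, re-confirming the bootstrap a posteriori. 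The hypothesis $\varepsilon^2/\alpha\gg1$ enters exactly here, as it is what makes $T^N\ge T$; your proposal never introduces $T^N$ or the bootstrap, so your claim that ``local existence on $[0,T^*]$ is upgraded to $[0,T]$ once the uniform energy bound is in hand'' is unjustified, because the energy bound itself is only proved on $[0,T^N]$.

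There is a second, related gap. After the absorption one is left with $C\varepsilon^3\int_\Omega|\Delta\rho_n|\,d\mathbf{x}$ on the right-hand side, which the energy functional does not control. The paper closes this with the maximal $L^p$-$L^q$ parabolic estimates \eqref{LpLq} for the continuity equation, bounding $\mathrm{div}(\rho_n\mathbf{u}_n)$ in $L^r$ by energy quantities; this produces a self-consistent inequality of the form $X\le C\varepsilon^2\|\rho_0\|_{W^{2,r}(\Omega)}+C\varepsilon\|\rho_0\|^2_{H^1(\Omega)}+C\varepsilon\,(E(0)+X)^{\frac1\beta+\frac12+\frac1{2r}}$ for $X=\|\varepsilon^3\Delta\rho_n\|_{L^r(\Omega\times(0,T))}$, and it is here that both conditions on $\beta$ arise: $\beta>\tfrac{2r}{2-r}$ so that $\rho_n\,\mathrm{div}\,\mathbf{u}_n$ lands in $L^r$, and $\beta\ge\tfrac{2r}{1-r}$ so that the exponent is $\le 1$ and the inequality can be closed for small $\varepsilon$. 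This is also where the specific form of $R$ in \eqref{regnergy} comes from. Your stated role for ``$\beta$ big enough'' (securing $\rho\in L^{\beta+1}$ and letting the artificial pressure ``dominate'') is not the role $\beta$ plays in this proof, and without the maximal-regularity step your energy inequality is not closed.
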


Let us make some remarks on the statement of this theorem. First, the largeness assumed on $\beta$ is to be understood in the following sense. Theorem \ref{regsys} holds, as will be shown later, with $r\in (1,2)$ as long as $\beta > \max\{ \tfrac{2r}{2-r},\tfrac{2r}{r-1} \}$. Second, Theorem \ref{regsys} does not actually assert the existence of global solutions to the regularized SW-LW interactions. It affirms that given a prefixed $T>0$, there is a solution in the time interval $[0,T]$ satisfying \eqref{regnergy} as long as $\frac{\varepsilon^2}{\alpha}$ is big enough. Remember that $\varepsilon$ is an artificial small parameter we introduced in order to regularize the continuity equation. The reason for this hypothesis is to control uniformly in $N$ the Jacobian of the regularized Lagrangian transformation (which may explode as $N\to \infty$). More specifically, we are going to show that $\eqref{regnergy}$ holds as long as $T\leq T^N$, where $T^N=T^N(\alpha,\varepsilon)$ is defined in terms of $C^N$ from \eqref{nablauN} as
\begin{equation}
T^N:=\frac{1}{C_N}\log\left( \frac{\varepsilon^2}{\alpha} \right)-\frac{1}{\mu}(E(0)+\varepsilon^{1/2} R),\label{TN}
\end{equation}
whenever the right hand side is positive, which is the case, in particular, for $\alpha$ and $\varepsilon$ small enough satisfying $\frac{\varepsilon^2}{\alpha}\gg 1$.

We intend to analyze convergence of solutions to the regularized system as $(\varepsilon,\alpha,N)\to(0,0,\infty)$ and we do it based on the energy estimate \eqref{regnergy}. Thus, if we are looking for convergence to a {\bf global} solution of the limit problem we simply have to ensure that this $T^N$ covers any given bounded interval for big enough $N$ and small enough $\varepsilon$ and $\alpha$. This is the case if, for instance, we take the limit $(\varepsilon,\alpha,N)\to(0,0,\infty)$ at any rate that satisfies
\begin{equation}
\left( \frac{\varepsilon^2}{\alpha}\right)^{1/C_N}\to\infty.
\end{equation}

\bigskip
 Consider the limit problem
\begin{flalign}
&\rho_t+\text{div}(\rho\mathbf{u})= 0\label {deltaE2rho}\\
&(\rho \mathbf{u})_t + \text{div}(\rho\mathbf{u}\otimes\mathbf{u}) + \nabla (a\rho^\gamma+\delta\rho^\beta)\label{deltaE2u} \\
&\hspace{30mm}=(\nabla\times\mathbf{H})\times\mathbf{H} + \mu \Delta\mathbf{u} + (\lambda + \mu)\nabla(\text{div}\mathbf{u}),\nonumber\\
&\mathbf{H}_t - \nabla \times(\mathbf{u}\times\mathbf{H})= -\nabla\times(\nu\nabla\times \mathbf{H} ),\label{deltaE2H}\\
&\text{div}\hspace{1mm}\mathbf{H}=0.\label{deltaE2H'}\\
&i\psi_t+\Delta_\mathbf{y} \psi = |\psi|^2\psi, &&\label{deltaE2Scho}
\end{flalign}
with initial and boundary conditions \eqref{appE20} and 
\begin{equation}
(\mathbf{u},\mathbf{H})|_{\partial\Omega}=0,\hspace{10mm}\psi|_{\partial \Omega_{\mathbf{y}}}=0.\label{deltabound}
\end{equation}

Concerning the convergence of the approximate solutions provided by Theorem~\ref{regsys} we establish the following. 

\begin{theorem}\label{teoeps0}
Let $(\rho_\varepsilon,\mathbf{u}_\varepsilon,\mathbf{H}_\varepsilon,\psi_\varepsilon)$ be the solution of the regularized system \eqref{appE2rho}-\eqref{appE2Scho} provided by Theorem \ref{regsys}.  Then, there is a subsequence (not relabelled) that converges to a global weak solution $(\rho,\mathbf{u},\mathbf{H},\psi)$ of system \eqref{deltaE2rho}-\eqref{deltaE2Scho}, where the initial and boundary conditions are satisfied in the sense of distributions, as $(\varepsilon,\alpha,N)\to (0,0,\infty)$ provided that
\begin{equation}
\left( \frac{\varepsilon^2}{\alpha} \right)^{1/C_N}\to\infty,\label{epsalphN}
\end{equation}
where $C_N$ is given by \eqref{CN}.

Moreover, the density $\rho$ is nonnegative and satisfies equation \eqref{deltaE2rho} in the sense of renormalized solutions, meaning that 
\begin{equation}
B(\rho)_t + \text{div}(B(\rho)\mathbf{u})+b(\rho)\text{div}\mathbf{u} = 0,\label{renormalized}
\end{equation}
is satisfied also in the sense of distributions, for any functions 
\begin{equation}
B\in C[0,\infty)\cap C^1(0,\infty),\hspace{4mm}b\in C[0,\infty), \text{ bounded on }[0,\infty),\hspace{4mm}B(0)=b(0)=0,\label{renormB}
\end{equation}
satisfying
\begin{equation}
b(z)=B'(z)z-B(z).\label{renormBb}
\end{equation}

Furthermore, we have that 
\begin{align}
&\rho_\varepsilon\to \rho \text{ weakly-* in }L^\infty(0,T;L^\beta(\Omega))\label{rhoepstorho}\\
&\mathbf{u}_\varepsilon\to \mathbf{u} \text{ weakly in }L^2(0,T;H_0^1(\Omega))\label{uepstou}\\
&\mathbf{H}_\varepsilon\to \mathbf{H} \text{ strongly in }L^2(\Omega\times(0,T))\label{HepstoH}\\
&\hspace{30mm} \text{ and weakly-* in }L^2(0,T;H^1(\Omega))\cap L^\infty(0,T;L^2(\Omega))\nonumber\\
&\psi_\varepsilon \to \psi \text{ strongly in }C(0,T;L^2(\Omega)) \text{ and weakly-* in } L^\infty(0,T;H_0^1(\Omega)),\label{psiepstopsi}
\end{align}
\begin{equation}
\rho_\varepsilon\to\rho \text{ in }C([0,T];L^1(\Omega))\label{rhoepstorhoCL1}
\end{equation}
and
\begin{equation}
a\rho_\varepsilon^\gamma + \delta\rho_\varepsilon^\beta \to a\rho^\gamma + \delta\rho^\beta, \label{rhoepsstrongtorho}
\end{equation}
in the sense of distributions, are satisfied along with the energy inequality
\begin{equation}
E_\delta(t)\leq E_\delta(0),\label{Edeltaleq}
\end{equation}
for a.e. $t\in(0,T)$ where, 
\begin{flalign}
&E_\delta(t)=\int_\Omega\left( \frac{1}{2}\rho|\mathbf{u}|^2 + \frac{a}{\gamma-1}\rho^\gamma +\frac{\delta}{\beta-1}\rho^\beta+\frac{1}{2}|\mathbf{H}|^2\right)d\mathbf{x} \label{EtleqE0}\\
&\hspace{15mm}+\int_{\Omega_\mathbf{y}}\left( \frac{1}{2}|\nabla_\mathbf{y}\psi|^2 + \frac{1}{4}|\psi|^4 \right)d\mathbf{y}&\nonumber\\
&\hspace{25mm}+\int_0^t\int_\Omega (\mu |\nabla \mathbf{u}|^2 + (\lambda+\mu)(\text{div}\hspace{.5mm}\mathbf{u})^2+\nu|\nabla\mathbf{H}|^2)d\mathbf{x}ds.\nonumber
\end{flalign} 
\end{theorem}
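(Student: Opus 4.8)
The plan is to argue by compactness, following the Lions--Feireisl theory of weak solutions for the compressible Navier--Stokes equations (\cite{Li}, \cite{FeNP}, \cite{Fe}), adapted to the MHD coupling as in \cite{HW}, while simultaneously letting the interaction coefficient $\alpha$ tend to zero. First I would collect the uniform bounds furnished by the energy estimate \eqref{regnergy}--\eqref{defregEt}. Since its right-hand side is $E(0)+\varepsilon^{1/2}R$, which stays bounded along the prescribed limit, one obtains uniformly in $(\varepsilon,\alpha,N)$ that $\rho_\varepsilon$ is bounded in $L^\infty(0,T;L^\gamma(\Omega)\cap L^\beta(\Omega))$, that $\sqrt{\rho_\varepsilon}\,\mathbf{u}_\varepsilon$ is bounded in $L^\infty(0,T;L^2(\Omega))$, that $\mathbf{u}_\varepsilon,\mathbf{H}_\varepsilon$ are bounded in $L^2(0,T;H_0^1(\Omega))$, that $\mathbf{H}_\varepsilon$ is bounded in $L^\infty(0,T;L^2(\Omega))$, and that $\psi_\varepsilon$ is bounded in $L^\infty(0,T;H_0^1(\Omega))$; in addition the dissipative term satisfies $\varepsilon\int_0^t\!\int_\Omega(a\gamma\rho^{\gamma-2}+\delta\beta\rho^{\beta-2})|\nabla\rho|^2=O(\varepsilon^{1/2})\to0$. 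From Theorem~\ref{regsys}(1) one also keeps the higher integrability $\rho_\varepsilon\in L^{\beta+1}(\Omega\times(0,T))$ coming from the artificial pressure, which makes the pressure equi-integrable. Passing to subsequences yields the weak limits \eqref{rhoepstorho}, \eqref{uepstou} and the weak-$*$ parts of \eqref{HepstoH} and \eqref{psiepstopsi}.

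Next I would upgrade to the strong convergences of $\mathbf{H}_\varepsilon$ and $\psi_\varepsilon$. For the magnetic field, equation \eqref{appE2H} together with the $H^1$ bound gives a uniform bound on $\partial_t\mathbf{H}_\varepsilon$ in a negative Sobolev norm, so the Aubin--Lions lemma yields \eqref{HepstoH}; this strong convergence is exactly what is needed to identify the quadratic Lorentz force $(\nabla\times\mathbf{H}_\varepsilon)\times\mathbf{H}_\varepsilon$ and the induction term $\nabla\times(\mathbf{u}_\varepsilon\times\mathbf{H}_\varepsilon)$ in the limit. For the wave function, once $\alpha\to0$ the Schr\"odinger equation \eqref{appE2Scho} decouples; the $H_0^1$ bound and the equation control $\partial_t\psi_\varepsilon$ in $H^{-1}(\Omega_\mathbf{y})$, and Aubin--Lions gives \eqref{psiepstopsi}, whence $|\psi_\varepsilon|^2\psi_\varepsilon\to|\psi|^2\psi$ and \eqref{deltaE2Scho} is recovered. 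Here I would also dispose of the coupling: by \eqref{nablauN}, \eqref{JY} and the definition \eqref{TN} of $T^N$, on $[0,T^N]$ one has $\alpha J_\mathbf{y}\le\varepsilon^2$, while $g'$ is supported away from the origin and $h$ is bounded by \eqref{gh}; hence both interaction terms tend to zero, which is precisely the role of the rate condition \eqref{epsalphN} (it also forces $T^N\to\infty$, so any prescribed $[0,T]$ is eventually covered). The artificial viscosity terms $\varepsilon\Delta\rho_\varepsilon$ and $\varepsilon\nabla\mathbf{u}_\varepsilon\cdot\nabla\rho_\varepsilon$ vanish by the $O(\varepsilon^{1/2})$ bound above. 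Standard compactness of the momentum $\rho_\varepsilon\mathbf{u}_\varepsilon$ (bounded with $\partial_t(\rho_\varepsilon\mathbf{u}_\varepsilon)$ controlled through \eqref{appE2u}) combined with the weak convergence of $\mathbf{u}_\varepsilon$ then identifies the convective term $\text{div}(\rho_\varepsilon\mathbf{u}_\varepsilon\otimes\mathbf{u}_\varepsilon)$.

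The main obstacle is the passage to the limit in the nonlinear pressure $a\rho_\varepsilon^\gamma+\delta\rho_\varepsilon^\beta$, i.e. the identification \eqref{rhoepsstrongtorho}, since a priori $\rho_\varepsilon$ converges only weakly. I would resolve this by the \emph{effective viscous flux} method. At the level $\varepsilon>0$ the continuity equation \eqref{appE2rho} is parabolic, so $\rho_\varepsilon$ is regular enough for the chain rule, and it is a renormalized solution in the sense of \eqref{renormalized}. Testing the momentum equation \eqref{appE2u} against $\nabla\Delta^{-1}[T_k(\rho_\varepsilon)]$ (with cut-offs and truncations $T_k$) and invoking the div--curl lemma to control the commutators shows that the quantity $a\rho^\gamma+\delta\rho^\beta-(\lambda+2\mu)\,\text{div}\,\mathbf{u}$ is weakly continuous with respect to the truncated densities $T_k(\rho_\varepsilon)$. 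Feeding this identity into the renormalized equations, for the $\varepsilon>0$ problem and for the limit, one estimates the oscillation defect measure, which is finite thanks to the $L^{\beta+1}$ bound; this shows that the amplitude of the oscillations of $\rho_\varepsilon$ cannot be created, forcing $\rho_\varepsilon\to\rho$ in $L^1(\Omega\times(0,T))$ and hence \eqref{rhoepstorhoCL1} and \eqref{rhoepsstrongtorho}. The strong convergence then lets one pass to the limit in the renormalized equation \eqref{renormalized}--\eqref{renormBb} for the limit density and complete the identification of the convective term. I expect this density compactness to be the hard part of the proof.

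Finally, the energy inequality \eqref{Edeltaleq}--\eqref{EtleqE0} follows from \eqref{regnergy} by weak lower semicontinuity: the kinetic, internal, magnetic and Schr\"odinger energies are convex in the converging variables and the dissipation integrals are lower semicontinuous, the perturbation $\varepsilon^{1/2}R$ tends to $0$, and the interaction energy $\alpha\int_{\Omega_\mathbf{y}}g(v)h(|\psi|^2)$ disappears as $\alpha\to0$; the passage to a weak limit naturally produces the inequality rather than an equality.
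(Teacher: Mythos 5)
Your overall strategy---energy bounds, Aubin--Lions compactness for $\mathbf{H}_\varepsilon$ and $\psi_\varepsilon$, vanishing of the coupling via $\alpha|J_\mathbf{y}|\le\varepsilon^2$, and an effective-viscous-flux/renormalization route to strong convergence of the densities---is the same as the paper's, but there is one genuine gap: you take the \emph{uniform} higher integrability of the density for granted. Theorem \ref{regsys}(1) asserts $\rho_\varepsilon\in L^{\beta+1}(\Omega\times(0,T))$ for each fixed solution, but not uniformly in $(\varepsilon,\alpha,N)$; the quantitative bounds in that theorem degenerate as $\varepsilon\to0$ (they control $\varepsilon^{3}\Delta\rho_\varepsilon$, $\varepsilon^{2}\rho_{\varepsilon t}$, and so on). The energy estimate \eqref{regnergy} only gives $\rho_\varepsilon\in L^\infty(0,T;L^\beta(\Omega))$ uniformly, so $\delta\rho_\varepsilon^\beta$ is merely bounded in $L^\infty(0,T;L^1(\Omega))$, which is not enough for equi-integrability of the pressure nor for the finiteness of your oscillation defect measure. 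The missing step is precisely Lemma \ref{rhobeta+1} of the paper: the local bound $\delta\int_O\rho_\varepsilon^{\beta+1}\,d\mathbf{x}\,ds\le c(O)$ with $c(O)$ independent of $(\varepsilon,\alpha,N)$, proved by testing the momentum equation with $\zeta\eta\,\mathcal{A}[\xi[\rho_\varepsilon]_{\mathbf{x}}^{\omega}]$, $\mathcal{A}=\nabla\Delta^{-1}$, and estimating all the resulting commutator-type integrals (including the interaction term, where $\alpha|J_\mathbf{y}|\le\varepsilon^2$ enters again). A second, smaller inaccuracy: the dissipation term in \eqref{regnergy} is bounded by $E(0)+\varepsilon^{1/2}R$, not $O(\varepsilon^{1/2})$, and since for $\gamma>2$ the weight $a\gamma\rho^{\gamma-2}+\delta\beta\rho^{\beta-2}$ degenerates as $\rho\to0$, it does not by itself control $\varepsilon^{1/2}\nabla\rho_\varepsilon$ in $L^2$; the paper obtains that bound from the $L^2$-identity for the parabolic continuity equation (multiply \eqref{appE2rho} by $\rho_\varepsilon$ and integrate), and it is this bound that kills $\varepsilon\nabla\mathbf{u}_\varepsilon\cdot\nabla\rho_\varepsilon$ and $\varepsilon\Delta\rho_\varepsilon$.

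Beyond that gap, your density-compactness step takes a genuinely different (and heavier) route than the paper: you invoke truncations $T_k(\rho_\varepsilon)$ and oscillation defect measures, i.e.\ Feireisl's machinery designed for low-integrability densities. Because $\beta>2$ here, the paper avoids truncations altogether: the limit density lies in $L^2(\Omega\times(0,T))$, so Lemma \ref{lemmarenormrhoL2} applies directly and the limit pair $(\rho,\mathbf{u})$ is a renormalized solution; one then takes $B(z)=z\log z$, compares $\overline{\rho\log\rho}$ with $\rho\log\rho$ using the effective viscous flux identity of Lemma \ref{lemmaeffvisc} and the monotonicity of $z\mapsto z^\beta$ and $z\mapsto z^\gamma$, and concludes strong convergence from strict convexity (Lemma \ref{teo2.11}), whence \eqref{rhoepstorhoCL1} and \eqref{rhoepsstrongtorho}. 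Your defect-measure variant would also work once the uniform local $L^{\beta+1}$ bound is established---indeed it is the argument needed later, in the $\delta\to0$ limit of Theorem \ref{teofinal}, where the high integrability is lost---but at the present stage it is more machinery than the problem requires.
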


\bigskip
Consider also the limit problem as $\delta\to0$.
\begin{flalign}
&\rho_t+\text{div}(\rho\mathbf{u})= 0,\label{finalrho}\\
&(\rho \mathbf{u})_t + \text{div}(\rho\mathbf{u}\otimes\mathbf{u}) + \nabla (a\rho^\gamma)\label{finalu}\\
&\qquad\qquad =(\nabla\times\mathbf{H})\times\mathbf{H} + \text{div}\big( \lambda (\text{div}\mathbf{u})\text{Id} + \mu (\nabla\mathbf{u}+(\nabla\mathbf{u})^\top) \big),
\nonumber\\
&\mathbf{H}_t - \nabla \times(\mathbf{u}\times\mathbf{H})= -\nabla\times(\nu\nabla\times \mathbf{H} ),\label{finalH}\\
&\text{div}\hspace{1mm}\mathbf{H}=0.\label{finalH'}\\
&i\psi_t + \Delta_\mathbf{y}\psi = |\psi|^2\psi, &&\label{finalSch}
\end{flalign}
subject to initial and boundary conditions
\begin{equation}
(\rho,\rho \mathbf{u},\mathbf{H})(\mathbf{x},0)=(\rho_0,\mathbf{m}_0,\mathbf{H}_0)(x),\hspace{10mm} \psi(\mathbf{y},0)=\psi_0(\mathbf{y}),\label{final0}
\end{equation}
and
\begin{equation}
(\mathbf{u},\mathbf{H})|_{\partial\Omega}=0,\hspace{10mm}\psi|_{\partial\Omega_\mathbf{y}}=0,\label{finalbound}
\end{equation}
with initial data satisfying
\begin{equation}
\begin{cases}
&\rho_0\geq 0,\hspace{5mm}\rho_0\in L^\gamma(\Omega),\\
&\frac{|\mathbf{m}_0|}{\rho_0}\in L^1(\Omega),\\
&\mathbf{H}_0 \in L^2(\Omega),\\
&\psi_0\in H_0^1(\Omega).
\end{cases}\label{condsfinal0}
\end{equation}

Once  Theorem~\ref{teoeps0} has been proved,    by repeating line by line the arguments in \cite[Section 5]{HW} we  arrive at  our  final result.

\begin{theorem}\label{teofinal}
Let $(\rho_\delta,\mathbf{u}_\delta,\mathbf{H}_\delta,\psi_\delta)$ be the solution of the decoupled system \eqref{deltaE2rho}-\eqref{deltaE2Scho}, \eqref{deltabound} with initial data
\[
(\rho_\delta,\mathbf{u}_\delta,\mathbf{H}_\delta,\psi_\delta)|_{t=0}=(\rho_{0\delta},\mathbf{u}_{0\delta},\mathbf{H}_{0\delta},\psi_{0\delta})
\]
provided by Theorem \ref{teoeps0}. Then, as $\delta\to 0$ we have that 
\begin{align}
&\rho_\delta\to\rho, \text{ weakly-* in }L^\infty(0,T;L^\gamma(\Omega)) \text{ and strongly in }C([0,T];L_{weak}^\gamma(\Omega)),\label{rhodeltatorho}\\
&\mathbf{u}_\delta \to\mathbf{u} \text{ weakly in }L^2(0,T;H_0^1(\Omega)),\\
&\mathbf{H}_\delta\to\mathbf{H} \text{ weakly in }L^2(0,T;H_0^1(\Omega)) \text{ and strongly in }C([0,T];L_{weak}^2(\Omega)),\\
&\psi_\delta\to\psi \text{ strongly in }C([0,T];L^4(\Omega)) \text{ and weakly-* in }L^\infty(0,T;H_0^1(\Omega)),\label{psideltatopsi}
\end{align}
subject to a subsequence as the case may be, where $(\rho,\mathbf{u},\mathbf{H},\psi)$ is a global weak solution of \eqref{finalrho}-\eqref{finalSch} with initial data \eqref{final0} satisfying \eqref{condsfinal0} and boundary conditions \eqref{finalbound}, satisfied in the sense of distributions. In fact we have that
\begin{equation}
\rho_\delta\to\rho, \text{ in }C([0,T];L^1(\Omega))
\end{equation}

Moreover, $\rho$ solves \eqref{finalrho} in the sense of renormalized solutions, meaning that  \eqref{renormalized} is satisfied in the sense of distributions with $B$ and $b$ as in \eqref{renormB} and \eqref{renormBb}.

Furthermore, we have that
\begin{equation}
E(t)\leq E(0),\label{finiteenergy}
\end{equation}
for a.e. $t$ with
\begin{flalign}
&E(t)=\int_\Omega\left( \frac{1}{2}\rho|\mathbf{u}|^2 + \frac{a}{\gamma-1}\rho^\gamma +\frac{1}{2}|\mathbf{H}|^2\right)d\mathbf{x}&\label{Et}\\
&\hspace{25mm}+\int_0^t\int_\Omega (\mu |\nabla \mathbf{u}|^2 + (\lambda+\mu)(\text{div}\hspace{.5mm}\mathbf{u})^2+\nu|\nabla\mathbf{H}|^2)d\mathbf{x}ds,\nonumber
\end{flalign} 
and,
\begin{equation}
\int_{\Omega_\mathbf{y}}\left( \frac{1}{2}|\nabla_\mathbf{y}\psi|^2 + \frac{1}{4}|\psi|^4 \right)d\mathbf{y}=\int_{\Omega_\mathbf{y}}\left( \frac{1}{2}|\nabla_\mathbf{y}\psi_0|^2 + \frac{1}{4}|\psi_0|^4 \right)d\mathbf{y},\label{consenergypsi}
\end{equation}
also for a.e. $t$.
\end{theorem}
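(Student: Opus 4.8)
The plan is to treat this as a standard vanishing artificial-pressure limit of Lions--Feireisl type, adapted to the MHD setting exactly as in \cite[Section 5]{HW}, while noting that the Schr\"odinger equation \eqref{deltaE2Scho} has become completely decoupled and can be handled separately. The starting point is the $\delta$-independent energy bound \eqref{Edeltaleq}, which I would first exploit to extract the basic weak limits. From $E_\delta(t)\le E_\delta(0)$ one collects uniform bounds for $\rho_\delta$ in $L^\infty(0,T;L^\gamma(\Omega))$, for $\sqrt{\rho_\delta}\,\mathbf{u}_\delta$ in $L^\infty(0,T;L^2(\Omega))$, for $\mathbf{u}_\delta,\mathbf{H}_\delta$ in $L^2(0,T;H_0^1(\Omega))$, for $\mathbf{H}_\delta$ in $L^\infty(0,T;L^2(\Omega))$, and for $\psi_\delta$ in $L^\infty(0,T;H_0^1(\Omega))$, together with the crucial fact that $\delta\rho_\delta^\beta\to 0$ in $L^\infty(0,T;L^1(\Omega))$. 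These furnish the weak-$*$ convergences \eqref{rhodeltatorho}--\eqref{psideltatopsi} along a subsequence, while time-equicontinuity from the continuity equation \eqref{deltaE2rho} upgrades the density convergence to $C([0,T];L^\gamma_{weak}(\Omega))$.

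To pass to the limit in the convective and pressure terms one needs equi-integrability of the pressure, so the next step is to improve the density integrability: multiplying the momentum equation \eqref{deltaE2u} by a test function built from the Bogovskii (inverse-divergence) operator applied to $\rho_\delta^\theta$ yields a uniform bound for $\rho_\delta^{\gamma+\theta}$ in $L^1(\Omega\times(0,T))$ for some $\theta>0$, ruling out concentration of $a\rho_\delta^\gamma$. The Lorentz force $(\nabla\times\mathbf{H}_\delta)\times\mathbf{H}_\delta$ is handled by establishing strong convergence of the magnetic field: from the induction equation \eqref{deltaE2H} one obtains a bound on $\partial_t\mathbf{H}_\delta$ in a negative Sobolev space, so that Aubin--Lions gives $\mathbf{H}_\delta\to\mathbf{H}$ strongly in $L^2(\Omega\times(0,T))$; combined with $\nabla\times\mathbf{H}_\delta\rightharpoonup\nabla\times\mathbf{H}$ this passes the Lorentz force to the limit.

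The main obstacle is the strong convergence of the density, needed to identify the weak limit $\overline{a\rho^\gamma}$ of the pressure with $a\rho^\gamma$. This is the heart of the argument and I would follow Feireisl. First I would prove the \emph{effective viscous flux identity}, namely that the weak limit of $\big(a\rho_\delta^\gamma+\delta\rho_\delta^\beta-(\lambda+2\mu)\mathrm{div}\,\mathbf{u}_\delta\big)\rho_\delta$ equals $\big(\overline{a\rho^\gamma}-(\lambda+2\mu)\mathrm{div}\,\mathbf{u}\big)\rho$, obtained via the div--curl lemma applied to the momentum equation after testing against $\nabla\Delta^{-1}\rho_\delta$; here it is essential that the magnetic force term has already been shown to converge strongly, so that it produces no spurious commutator contribution. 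With the effective flux in hand, I would control the amplitude of the density oscillations through the oscillation defect measure $\mathrm{osc}_p[\rho_\delta\to\rho]$ and show it is bounded for some $p>2$; this is precisely the step that fixes the admissible range of $\gamma$ and, in two dimensions, permits $\gamma>1$. Establishing that $\rho$ is a renormalized solution of \eqref{deltaE2rho} then yields, via the standard renormalization-plus-convexity argument applied to $z\mapsto z\log z$, that $\overline{\rho\log\rho}=\rho\log\rho$, hence $\rho_\delta\to\rho$ strongly in $L^1(\Omega\times(0,T))$ and in $C([0,T];L^1(\Omega))$; this gives the renormalized identity \eqref{renormalized} and identifies the limit pressure.

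Finally, since the Schr\"odinger equation \eqref{deltaE2Scho} is decoupled and $\delta$-independent, the convergence \eqref{psideltatopsi} and the energy conservation \eqref{consenergypsi} follow from the well-posedness and the conservation laws of the defocusing cubic NLS on $\Omega_\mathbf{y}$ with Dirichlet data, using continuous dependence on the initial data $\psi_{0\delta}\to\psi_0$ in $H_0^1$. Passing to the limit in the remaining linear and already-compact terms, and invoking weak lower semicontinuity of the dissipation together with the strong density convergence, yields the energy inequality \eqref{finiteenergy} and completes the proof. I expect the effective-flux and oscillation-defect step to be the sole genuine difficulty; everything else is either inherited from \eqref{Edeltaleq} or is a routine compactness argument.
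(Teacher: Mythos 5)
Your proposal is correct and follows essentially the same route as the paper: the paper's entire proof of this theorem consists in invoking, line by line, the arguments of \cite[Section 5]{HW} --- precisely the energy bounds from \eqref{Edeltaleq}, the Bogovskii-type improvement of density integrability, the effective viscous flux identity, the oscillation-defect/renormalization argument with $z\mapsto z\log z$, and the resulting strong convergence of the densities that you outline --- together with an application of the Aubin--Lions lemma for the decoupled Schr\"odinger equation. The only minor divergence is in the NLS step: you identify the limit wave function and the energy identity \eqref{consenergypsi} via $H_0^1$ well-posedness and continuous dependence of the 2D cubic defocusing NLS on a bounded domain (which requires the known but nontrivial uniqueness theory in that class), whereas the paper simply extracts a convergent subsequence by Aubin--Lions compactness; both routes are workable.
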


The rest of this paper is organized as follows. In Section \ref{firstapp} we show existence of solutions to the regularized problem. Finally Sections \ref{limit1} and \ref{limit2} are devoted to the convergence of the sequence of solutions to the approximate problem to a solution of the limit problem.
As emphasized above, the proposed approximation scheme has the purpose to
legitimize the coordinates of the limiting Schr\"odinger equation to be considered as the
Lagrangian coordinates of the fluid in a generalized sense.

\section{Solutions to the regularized system}\label{firstapp}

The proof of  Theorem~\ref{regsys} consists in a Faedo-Galerkin method. We are going to apply Shauder's fixed point theorem in the finite-dimensional space $X_n$ in order to solve the momentum equation, having solved all the other equations in terms of the velocity. This provides a local approximate solution of the regularized system. Then, we deduce an energy estimate, corresponding to \eqref{regnergy}, that allows us to extend the local approximate solutions to the time interval $[0,T^N]$. As mentioned before, our analysis is based on the work by Hu and Wang in \cite{HW} in the study of the multidimensional MHD equations and also on the work by Feireisl, et al. in \cite{FeNP} and the work of P.-L. Lions in \cite{Li} in the study of the Navier-Stokes equations, although we had to develop new estimates in order to include the SW-LW interactions.

The rest of this Section is devoted to the proof of Theorem~\ref{regsys}.

\subsection{Approximate solutions, Faedo-Galerkin scheme}

Let us now fix $\varepsilon$, $\alpha$, $\delta$, $\beta$ and $N$ as in the statement of Theorem \ref{regsys}. For each $n\in\mathbb{N}$, we consider the space $X_n$ as defined before. We are going to apply Schauder's fixed point theorem in order to find a function $\mathbf{u}_n\in C(0,T;X_n)$ that satisfies equation \eqref{appE2u} in an approximate way. In order to achieve this, we must first show that given a function $\mathbf{u}\in X_n$ all the other equations \eqref{appE2rho}, \eqref{appE2H}, \eqref{appE2H'} and \eqref{appE2Scho} can be solved in terms of it.

Let us begin with the solvability of the continuity equation in terms of the velocity. Specifically, we consider the problem
\begin{equation}
\begin{cases}
\rho_t+\text{div}(\rho\mathbf{u})= \varepsilon \Delta \rho, &\text{ on } \Omega\times(0,T)\\
\nabla \rho\cdot \mathbf{n}=0, & \text{ on }\partial \Omega\\
\rho=\rho_0, &\text{ on }\Omega\times\{t=0\}.
\end{cases}\label{rhotermsofu}
\end{equation}

\begin{lemma}\label{lemmarhotermsu}
Let $\rho_0\in C^{2+\zeta}(\Omega)$, $\zeta>0$ and $u\in C([0,T];C_0^2(\overline{\Omega}))$ be given. Assume, further, that $\nabla \rho_0 \cdot \mathbf{n}=0$ on $\partial \Omega$.

Then, problem \eqref{rhotermsofu} has a unique classical solution $\rho$ such that
\begin{equation}
\partial_t \rho\in C([0,T];C^{\zeta}(\overline{\Omega})),\hspace{10mm}\rho\in C([0,T];C^{2+\zeta}(\Omega)).
\end{equation}
Moreover, suppose that the initial function $\rho_0$ is positive and let 
\begin{equation*}
\mathbf{u}\to\rho[\mathbf{u}]
\end{equation*}
be the solution mapping which assigns to any $\mathbf{u}\in C([0,T];C_0^2(\Omega))$ the unique solution $\rho$ of \eqref{rhotermsofu}.

Then, this mapping takes bounded sets in the space $C([0,T];C_0^2(\Omega))$ into bounded sets in the space
\begin{equation*}
V:=\{ \partial_t \rho\in C([0,T];C^{\zeta}(\overline{\Omega})), \rho\in C([0,T];C^{2+\zeta}(\Omega))\}
\end{equation*}
and
\begin{equation*}
\mathbf{u}\in C([0,T];C_0^2(\Omega))\to \rho[u]\in C^1([0,T]\times\overline{\Omega})
\end{equation*}
is continuous.
\end{lemma}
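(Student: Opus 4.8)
The plan is to recognize \eqref{rhotermsofu} as a linear, uniformly parabolic initial-boundary value problem and to read off existence, uniqueness, regularity and continuous dependence from the classical Schauder theory. Expanding the divergence, the equation becomes
\[
\partial_t\rho-\varepsilon\Delta\rho+\mathbf{u}\cdot\nabla\rho+(\text{div}\,\mathbf{u})\rho=0,
\]
a second-order linear parabolic equation in non-divergence form whose principal part $-\varepsilon\Delta$ is uniformly elliptic (since $\varepsilon>0$ is fixed) and whose lower-order coefficients $\mathbf{u}$ and $\text{div}\,\mathbf{u}$ lie in $C([0,T];C^1(\overline\Omega))\subset C^{\zeta,\zeta/2}(\overline{Q_T})$, where $Q_T:=\Omega\times(0,T)$. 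Together with the homogeneous Neumann condition and the datum $\rho_0\in C^{2+\zeta}(\overline\Omega)$, the hypothesis $\nabla\rho_0\cdot\mathbf{n}=0$ on $\partial\Omega$ supplies exactly the zeroth-order compatibility condition required at the parabolic corner $\partial\Omega\times\{t=0\}$.

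First I would invoke the standard existence and uniqueness theorem for linear parabolic problems with H\"older-continuous coefficients (as used by Feireisl et al.\ in \cite{FeNP}): under the stated regularity and compatibility there is a unique solution in the parabolic H\"older space $C^{2+\zeta,1+\zeta/2}(\overline{Q_T})$, which in particular yields $\rho\in C([0,T];C^{2+\zeta}(\overline\Omega))$ and $\partial_t\rho\in C([0,T];C^{\zeta}(\overline\Omega))$, i.e.\ the asserted membership in $V$. Uniqueness follows elementarily from an $L^2$ energy estimate (or the weak maximum principle) applied to the difference of two solutions, whose data and boundary values vanish.

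For the boundedness of the solution map on bounded sets, the key point is the Schauder a priori estimate
\[
\|\rho\|_{C^{2+\zeta,1+\zeta/2}(\overline{Q_T})}\le C\,\|\rho_0\|_{C^{2+\zeta}(\overline\Omega)},
\]
in which the constant $C$ depends only on $\varepsilon$, $T$, $\Omega$ and on an upper bound for the coefficient norms $\|\mathbf{u}\|_{C([0,T];C^1(\overline\Omega))}$. Hence a bounded set of velocities in $C([0,T];C_0^2(\overline\Omega))$ produces a uniformly bounded family of coefficients, and therefore a bounded image in $V$. The positivity of $\rho_0$, propagated by the maximum principle, also yields a strictly positive lower bound and an upper bound for $\rho$ on $[0,T]$ depending only on these same quantities; this is what later legitimizes the expressions $g'(1/\rho)$ and $J_\mathbf{y}/\rho$ in the momentum equation.

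Finally, for continuity of $\mathbf{u}\mapsto\rho[\mathbf{u}]$ into $C^1([0,T]\times\overline\Omega)$, I would take $\mathbf{u}_n\to\mathbf{u}$ in $C([0,T];C_0^2(\overline\Omega))$ and set $w_n:=\rho[\mathbf{u}_n]-\rho[\mathbf{u}]$. Subtracting the two equations, $w_n$ solves a linear parabolic problem with the same principal part, homogeneous Neumann and initial data, coefficients $\mathbf{u}_n$, $\text{div}\,\mathbf{u}_n$ (uniformly bounded), and right-hand side
\[
-(\mathbf{u}_n-\mathbf{u})\cdot\nabla\rho[\mathbf{u}]-\text{div}(\mathbf{u}_n-\mathbf{u})\,\rho[\mathbf{u}],
\]
which tends to zero in $C^{\zeta,\zeta/2}(\overline{Q_T})$ by the uniform $V$-bound on $\rho[\mathbf{u}]$ from the previous step. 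Applying the Schauder estimate to $w_n$ forces $\|w_n\|_{C^{2+\zeta,1+\zeta/2}}\to0$, hence $\|w_n\|_{C^1([0,T]\times\overline\Omega)}\to0$. I expect this last step---controlling the difference uniformly while the coefficients themselves vary with $n$---to be the only nonroutine point; one may either carry the varying coefficients directly through the Schauder estimate as above, or, alternatively, combine the uniform $V$-bound with the compact embedding $V\hookrightarrow\hookrightarrow C^1([0,T]\times\overline\Omega)$ and identify limits via the uniqueness already established.
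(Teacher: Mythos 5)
There is a genuine gap, and it sits at the very first (and load-bearing) step. You claim $C([0,T];C^1(\overline\Omega))\subset C^{\zeta,\zeta/2}(\overline{Q_T})$, $Q_T=\Omega\times(0,T)$, but this inclusion is false: a function of $t$ alone that is continuous but not H\"older at some point (e.g.\ $g(t)=1/\log(1/t)$ near $t=0$, extended continuously) belongs to $C([0,T];C^\infty(\overline\Omega))$ and to no parabolic H\"older class. The lemma's hypothesis gives $\mathbf{u}$ merely \emph{continuous} in time with values in $C^2_0(\overline\Omega)$, so the coefficients $\mathbf{u}$ and $\text{div}\,\mathbf{u}$ of your non-divergence form operator do not lie in the class required by the classical Ladyzhenskaya--Solonnikov--Uraltseva Schauder theory, and that theory cannot be invoked. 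Worse, the conclusion you would extract from it, $\rho\in C^{2+\zeta,1+\zeta/2}(\overline{Q_T})$, is generally \emph{false} in this setting (one cannot expect $\partial_t\rho$ to be H\"older in time when the coefficients are only continuous in time); this is precisely why the lemma asserts only the anisotropic regularity $\rho\in C([0,T];C^{2+\zeta})$, $\partial_t\rho\in C([0,T];C^{\zeta})$. The same defect propagates to your continuity argument: the source $-(\mathbf{u}_n-\mathbf{u})\cdot\nabla\rho[\mathbf{u}]-\text{div}(\mathbf{u}_n-\mathbf{u})\,\rho[\mathbf{u}]$ tends to zero in $C([0,T];C^{\zeta}(\overline\Omega))$ but not, in general, in $C^{\zeta,\zeta/2}(\overline{Q_T})$, so the Schauder estimate for $w_n$ is again unavailable.

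What is needed is a linear theory tailored to coefficients H\"older in space but only continuous (or measurable) in time: either maximal regularity in interpolation spaces for the analytic semigroup generated by $\varepsilon\Delta$ with Neumann conditions (Da Prato--Grisvard/Lunardi), which yields exactly $\rho\in C([0,T];C^{2+\zeta})$, $\partial_t\rho\in C([0,T];C^{\zeta})$ for sources in $C([0,T];C^{\zeta})$ and then a fixed-point argument absorbing the lower-order terms, or Krylov-type Schauder estimates for equations with time-measurable, space-H\"older coefficients. This is why the paper does not argue as you do but instead cites \cite[Proposition~7.1]{Fe} (cf.\ \cite[Lemma~2.2]{FeNP}), where this exact situation is treated. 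The rest of your architecture is sound and would survive once the correct linear result is substituted: uniqueness by the energy method or maximum principle, $\nabla\rho_0\cdot\mathbf{n}=0$ as the compatibility condition, the maximum-principle bounds on $\rho$, and your second route to continuity (uniform bound in $V$, compactness, identification of limits by uniqueness) --- with the caveat that $V$ does not embed compactly into $C^1([0,T]\times\overline\Omega)$ in the $\partial_t$ component, since a bound on $\partial_t\rho_n$ in $C([0,T];C^{\zeta})$ gives no equicontinuity in time; one first extracts convergence $\rho_n\to\rho$ in $C([0,T];C^2(\overline\Omega))$ by interpolation and Arzel\`a--Ascoli, and then recovers uniform convergence of $\partial_t\rho_n$ from the equation itself.
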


For the proof of this Lemma, we refer to \cite[Proposition~7.1]{Fe} (cf. \cite[Lemma~2.2]{FeNP}). Let us point out that solutions of the parabolic problem \eqref{rhotermsofu} obey the maximum principle which implies that
\begin{align}
&(\inf_{\mathbf{x}\in\Omega}\rho_0(\mathbf{x},0))\exp \left( -\int_0^t||\text{div}\hspace{.5mm}\mathbf{u}||_{L^\infty(\Omega)}ds\right)\leq \rho (\mathbf{x},t) \nonumber\\
&\hspace{40mm}\leq (\sup_{\mathbf{x}\in\Omega}\rho_0(\mathbf{x},0))\exp \left( \int_0^t||\text{div}\hspace{.5mm}\mathbf{u}||_{L^\infty(\Omega)}ds\right),\label{maxprinc}
\end{align}
for all $t\in [0,T]$ and all $\mathbf{x}\in\Omega$.

We also have to consider the following problem for the magnetic field
\begin{equation}
\begin{cases}
\mathbf{H}_t - \nabla \times(\mathbf{u}\times\mathbf{H})= -\nabla\times(\nu\nabla\times \mathbf{H} ), &\text{ on } \Omega\times(0,T)\\
\text{div}\hspace{1mm}\mathbf{H}=0,&\text{ on } \Omega\times(0,T)\\
\mathbf{H}=0, &\text{ on }\partial \Omega\\
\mathbf{H}=\mathbf{H}_0, &\text{ on }\Omega\times\{ t=0\}.
\end{cases}\label{Htermsofu}
\end{equation}

Regarding this problem we have the following result as presented by Hu and Wang (see \cite[Lemma~3.2]{HW}):
\begin{lemma}\label{lemmaHtermsu}
Assume that $\mathbf{u}\in C([0,T];C_0^2(\overline{\Omega}))$ is given. Then, problem \eqref{Htermsofu} has a unique solution $\mathbf{H}$ that satisfies
\begin{equation}
\mathbf{H}\in L^2(0,T;H_0^1(\Omega))\cap L^\infty(0,T;L^2(\Omega)),
\end{equation}
which solves \eqref{Htermsofu} in the weak sense and satisfies the initial and boundary conditions in the sense of traces. Moreover, let
\[
\mathbf{u} \to \mathbf{H}[\mathbf{u}]
\]
be the solution operator which assigns to $\mathbf{u}\in C([0,T];C^2(\overline{\Omega}))$ the unique solution $\mathbf{H}$ of \eqref{Htermsofu}. Then, this mapping maps bounded sets in $C([0,T];C_0^2(\overline{\Omega}))$ into bounded subsets of
\[
Y:=L^2(0,T;H_0^1(\Omega))\cap L^\infty(0,T;L^2(\Omega)),
\]
and 
\[
\mathbf{u}\in C([0,T];C^2(\overline{\Omega})) \to \mathbf{H}\in Y
\]
is continuous.
\end{lemma}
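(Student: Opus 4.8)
The plan is to treat \eqref{Htermsofu} as a \emph{linear} parabolic system for $\mathbf{H}$ in which the given smooth field $\mathbf{u}$ plays the role of a coefficient, and to construct the solution by a Galerkin scheme based on the eigenbasis $\{\eta_j\}$ followed by energy estimates. First I would record the weak formulation: testing against $\mathbf{w}\in H_0^1(\Omega;\mathbb{R}^3)$ and integrating by parts (boundary terms vanishing since $\mathbf{w}=0$ on $\partial\Omega$) gives
\[
\frac{d}{dt}\int_\Omega \mathbf{H}\cdot\mathbf{w}\,d\mathbf{x}+\nu\int_\Omega(\nabla\times\mathbf{H})\cdot(\nabla\times\mathbf{w})\,d\mathbf{x}=\int_\Omega(\mathbf{u}\times\mathbf{H})\cdot(\nabla\times\mathbf{w})\,d\mathbf{x}.
\]
Restricting $\mathbf{H}$ and $\mathbf{w}$ to the span of $\{\eta_j\}_{j=1}^m$ turns this into a linear system of ODEs for the coefficients, uniquely solvable on all of $[0,T]$ by elementary theory, producing approximate solutions $\mathbf{H}_m$.

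Second, the key a priori bound comes from the choice $\mathbf{w}=\mathbf{H}_m$, which yields
\[
\frac{1}{2}\frac{d}{dt}\|\mathbf{H}_m\|_{L^2}^2+\nu\|\nabla\times\mathbf{H}_m\|_{L^2}^2\le\frac{\nu}{2}\|\nabla\times\mathbf{H}_m\|_{L^2}^2+\frac{1}{2\nu}\|\mathbf{u}\|_{L^\infty}^2\|\mathbf{H}_m\|_{L^2}^2.
\]
Gronwall's inequality then controls $\|\mathbf{H}_m(t)\|_{L^2}$ by $\|\mathbf{H}_0\|_{L^2}$ and $\|\mathbf{u}\|_{L^\infty(0,T;L^\infty)}$, and integrating in time bounds $\|\nabla\times\mathbf{H}_m\|_{L^2(\Omega\times(0,T))}$. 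To upgrade the curl bound to a full $H_0^1$ bound I would invoke the identity $\|\nabla\mathbf{H}\|_{L^2}^2=\|\nabla\times\mathbf{H}\|_{L^2}^2+\|\text{div}\,\mathbf{H}\|_{L^2}^2$, valid for fields in $H_0^1$, together with the divergence-free condition. That condition is preserved automatically at the level of the PDE: applying $\text{div}$ to the equation and using $\text{div}(\nabla\times\cdot)\equiv 0$ gives $\partial_t(\text{div}\,\mathbf{H})=0$, so $\text{div}\,\mathbf{H}=\text{div}\,\mathbf{H}_0=0$ for all $t$.

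These uniform estimates let me pass to the limit $m\to\infty$: weak-$*$ convergence in $L^\infty(0,T;L^2)$ and weak convergence in $L^2(0,T;H_0^1)$ suffice, since the problem is linear in $\mathbf{H}$, and the limit is a weak solution lying in $Y$ satisfying the initial and boundary data in the sense of traces. Uniqueness is immediate from linearity: the difference of two solutions with identical data solves the homogeneous problem with zero initial datum, and the energy estimate forces it to vanish. The boundedness assertion is then a direct consequence, because the Gronwall constant depends only on $T$, $\nu$, and $\|\mathbf{u}\|_{L^\infty(0,T;L^\infty)}\le\|\mathbf{u}\|_{C([0,T];C_0^2(\overline{\Omega}))}$.

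Finally, for continuity of $\mathbf{u}\mapsto\mathbf{H}[\mathbf{u}]$ I would set $\mathbf{D}:=\mathbf{H}[\mathbf{u}_1]-\mathbf{H}[\mathbf{u}_2]$. Writing $\mathbf{u}_1\times\mathbf{H}_1-\mathbf{u}_2\times\mathbf{H}_2=\mathbf{u}_1\times\mathbf{D}+(\mathbf{u}_1-\mathbf{u}_2)\times\mathbf{H}_2$, one sees that $\mathbf{D}$ solves the same homogeneous operator driven by $\mathbf{u}_1$ with zero initial data and source $\nabla\times\big((\mathbf{u}_1-\mathbf{u}_2)\times\mathbf{H}[\mathbf{u}_2]\big)$. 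Running the same energy computation gives $\|\mathbf{D}\|_Y\le C\,\|\mathbf{u}_1-\mathbf{u}_2\|_{L^\infty(0,T;L^\infty)}\,\|\mathbf{H}[\mathbf{u}_2]\|_{L^2(\Omega\times(0,T))}$, and since the last factor is controlled by the boundedness step, the right-hand side vanishes as $\mathbf{u}_1\to\mathbf{u}_2$ in $C([0,T];C^2(\overline{\Omega}))$. I expect the only genuinely delicate point to be the curl--curl bookkeeping: making the identity $\|\nabla\times\mathbf{H}\|_{L^2}=\|\nabla\mathbf{H}\|_{L^2}$ precise under the Dirichlet condition in the mixed two-dimensional domain with three-component fields, and checking that the scalar-Laplacian eigenbasis $\{\eta_j\}$ is compatible with recovering $\text{div}\,\mathbf{H}=0$ in the limit; everything else is routine linear parabolic theory.
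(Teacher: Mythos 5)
First, a point of comparison: the paper does not actually prove this lemma --- it is quoted verbatim from Hu and Wang (\cite[Lemma~3.2]{HW}), so there is no detailed argument in the paper to measure yours against. Your outline (componentwise Galerkin on the Dirichlet eigenbasis, energy estimate with test function $\mathbf{H}_m$, Gronwall, linearity for uniqueness and for continuity of $\mathbf{u}\mapsto\mathbf{H}[\mathbf{u}]$) is the natural route and matches in spirit what the cited reference does; the weak formulation, the Gronwall step, the uniqueness argument and the structure of the continuity estimate for $\mathbf{D}$ are all fine.

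The genuine gap is precisely the point you set aside at the end as ``curl--curl bookkeeping'': the upgrade from the curl bound to the $H_0^1$ bound is the crux of the lemma, and your argument for it is circular. The identity $\|\nabla\mathbf{H}\|_{L^2}^2=\|\nabla\times\mathbf{H}\|_{L^2}^2+\|\text{div}\,\mathbf{H}\|_{L^2}^2$ requires \emph{both} that $\mathbf{H}\in H_0^1(\Omega)$ and that $\text{div}\,\mathbf{H}$ be known, and neither object in your proof satisfies both hypotheses. The Galerkin approximations $\mathbf{H}_m$ are in $H_0^1$ by construction, but they are \emph{not} divergence-free: projecting componentwise onto the span of scalar Dirichlet eigenfunctions does not commute with $\text{div}$, your observation $\partial_t(\text{div}\,\mathbf{H})=0$ applies to the exact equation and not to its Galerkin projection, and the energy estimate gives no control on $\text{div}\,\mathbf{H}_m$; hence there is no uniform $H^1$ bound at the approximate level. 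Conversely, the weak limit $\mathbf{H}$ \emph{is} divergence-free (test the limiting weak formulation with $\mathbf{w}=\nabla\phi$, $\phi\in C_0^\infty(\Omega)$), but it is produced only from bounds on $\|\mathbf{H}_m\|_{L^2}$ and $\|\nabla\times\mathbf{H}_m\|_{L^2}$, so a priori it lies in the weak closure of $H_0^1$ in the graph norm of the curl, i.e.\ in $H_0(\mathrm{curl};\Omega)$ (vanishing tangential trace); invoking the $H_0^1$ identity for it assumes what is to be proved. The most one can extract, via the Gaffney-type embedding on a smooth domain, is $\mathbf{H}\in L^2(0,T;H^1(\Omega))$ with zero tangential trace --- the normal trace, hence the full boundary condition $\mathbf{H}|_{\partial\Omega}=0$ claimed in the statement, is not recovered.

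Nor can this be repaired by more careful bookkeeping within your scheme, because the combination ``curl--curl equation $+$ $\text{div}\,\mathbf{H}=0$ $+$ $\mathbf{H}\in L^2(0,T;H_0^1)$'' is formally over-determined: if $\text{div}\,\mathbf{H}=0$ then $-\nabla\times(\nu\nabla\times\mathbf{H})=\nu\Delta\mathbf{H}$ distributionally, so for $\mathbf{u}\equiv 0$ any such solution would have to coincide, by parabolic uniqueness in $H_0^1$, with the componentwise Dirichlet heat flow of $\mathbf{H}_0$ --- and the Dirichlet semigroup does not preserve the divergence-free constraint for generic divergence-free $\mathbf{H}_0\in H_0^1$. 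A workable proof must therefore either run the Galerkin scheme in the divergence-free subspace $V=\{\mathbf{w}\in H_0^1(\Omega;\mathbb{R}^3):\text{div}\,\mathbf{w}=0\}$, where $\|\nabla\times\mathbf{w}\|_{L^2}=\|\nabla\mathbf{w}\|_{L^2}$ does hold and the energy estimate closes in $H_0^1$ (at the price that the limit equation is recovered only against divergence-free test fields, i.e.\ modulo a gradient, by de Rham's theorem), or work from the start with the form $\nu\Delta\mathbf{H}$ as in \eqref{Htermsofu'} together with a boundary condition compatible with preservation of $\text{div}\,\mathbf{H}=0$. Either way an additional idea is needed beyond what you wrote; the paper itself sidesteps the entire issue by citing \cite[Lemma~3.2]{HW}.
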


Finally, we move on to the solvability of the nonlinear Schr\"{o}dinger equation in terms of $\mathbf{u}$. It is this issue that poses a restriction on the dimension of $\Omega$. To our knowledge, the global solvability of the nonlinear Schr\"{o}dinger equation on a bounded domain of $\mathbb{R}^d$ with large initial data is an open problem for $d>2$. In the two-dimensional case, however, we have the result by Brezis and Gallouet at hand (see \cite{BrGa}) whose proof we can addapt to our present situation.

Consider the following problem
\begin{equation}
\begin{cases}
i\psi_t+\Delta_\mathbf{y} \psi = |\psi|^2\psi + \alpha g(v)h'(|\psi|^2)\psi, &\text{ on } \Omega_\mathbf{y}\times(0,T)\\
\psi=0, &\text{ on }\partial \Omega_\mathbf{y}\\
\psi=\psi_0, &\text{ on }\Omega_\mathbf{y}\times\{ t=0\},
\end{cases}\label{Schotermsofu}
\end{equation}
where, $v=v[\mathbf{u}]$ is given by
\[
v(t,\mathbf{y}(t,\mathbf{x}))=\frac{1}{\rho[\mathbf{u}](t,\mathbf{x})},
\]
$\rho[u]$ is as in Lemma \ref{lemmarhotermsu} and $\mathbf{y}$ is the approximate Lagrangian coordinate associated to the approximate velocity field $\mathbf{u}^N$ . Then, we can prove the following.
\begin{lemma}\label{lemmapsitermsu}
Assume that $\psi_0\in H^2(\Omega_{\mathbf{y}})\cap H_0^1(\Omega_{\mathbf{y}})$ and $\mathbf{u}\in C([0,T];C_0^2(\overline{\Omega}))$ are given. Then, problem \eqref{Schotermsofu} has a unique solution $\psi$ that satisfies
\begin{equation}
\psi\in C([0,T];H^2(\Omega_{\mathbf{y}})\cap H_0^1(\Omega_{\mathbf{y}}))\cap C^1([0,T];L^2(\Omega_{\mathbf{y}})).
\end{equation}
Moreover, let
\[
\mathbf{u} \to \psi[\mathbf{u}]
\]
be the solution operator which assigns to $\mathbf{u}\in C([0,T];C^2(\overline{\Omega}))$ the unique solution $\psi$ of \eqref{Schotermsofu}. Then, this mapping maps bounded sets in $C([0,T];C_0^2(\overline{\Omega}))$ into bounded subsets of
\[
Z:= C(0,T;H_0^1(\Omega_\mathbf{y})\cap L^2(\Omega)) 
\]
and
\[
\mathbf{u}\in C([0,T];C^2(\overline{\Omega})) \to \psi\in Z
\]
is continuous.
\end{lemma}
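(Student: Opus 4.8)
The plan is to adapt the Brezis--Gallouet global existence argument for the two-dimensional cubic NLS so as to absorb the extra potential term $\alpha g(v)h'(|\psi|^2)\psi$, whose coefficient is, by Lemma~\ref{lemmarhotermsu} and the maximum principle \eqref{maxprinc}, a smooth bounded function of $(t,\mathbf{y})$. I would first settle local-in-time existence and uniqueness. Since $\Omega_\mathbf{y}\subset\mathbb{R}^2$, the Dirichlet Laplacian $-\Delta_\mathbf{y}$ is self-adjoint with domain $D(\Delta_\mathbf{y})=H^2(\Omega_\mathbf{y})\cap H_0^1(\Omega_\mathbf{y})$, the group $e^{it\Delta_\mathbf{y}}$ is unitary and preserves this domain, and $H^2(\Omega_\mathbf{y})$ is a Banach algebra embedded in $L^\infty$. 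Consequently $\psi\mapsto|\psi|^2\psi$ and, for the fixed regular coefficient $g(v)$, $\psi\mapsto\alpha g(v)h'(|\psi|^2)\psi$ are locally Lipschitz from $D(\Delta_\mathbf{y})$ into itself. Writing the equation in Duhamel form and running a standard contraction on $C([0,\tau];H^2\cap H_0^1)$ produces a unique maximal solution; that $\psi\in C^1([0,\tau];L^2)$ then follows directly from the equation once $\psi\in C([0,\tau];H^2)$, and uniqueness is obtained by an $L^2$ energy estimate for the difference of two solutions, the nonlinearity being Lipschitz on bounded sets of $L^\infty$.

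Next, and this is the heart of the matter, I would derive a priori estimates forcing the maximal time to be at least $T$. Multiplying by $\overline\psi$ and taking imaginary parts gives conservation of $\|\psi(t)\|_{L^2}$, the potential being real. Testing with $\overline{\psi}_t$ and taking real parts produces the energy
\[
\mathcal{E}(t)=\int_{\Omega_\mathbf{y}}\Big(\tfrac12|\nabla_\mathbf{y}\psi|^2+\tfrac14|\psi|^4+\tfrac{\alpha}{2}\,g(v)h(|\psi|^2)\Big)\,d\mathbf{y},
\]
whose time derivative reduces to an integral involving $\partial_t[g(v)]\,h(|\psi|^2)$. Because $g$ is smooth with $g'$ compactly supported in $(0,\infty)$ by \eqref{gh}, and $v=1/\rho[\mathbf{u}]$ is bounded while $\rho[\mathbf{u}]$ is bounded away from zero by \eqref{maxprinc} (no vacuum, since $\mathbf{u}^N$ is smooth), the quantity $\partial_t[g(v)]$ is bounded in $L^\infty$; Gronwall then yields a bound on $\|\psi(t)\|_{H^1}$ throughout $[0,T]$.

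The remaining step is the global $H^2$ bound. From the equation and elliptic regularity,
\[
\|\psi\|_{H^2}\leq C\big(\|\psi_t\|_{L^2}+\||\psi|^2\psi\|_{L^2}+\alpha\|g(v)h'(|\psi|^2)\psi\|_{L^2}\big),
\]
and the last two terms are controlled by the $H^1$ bound via $H^1\hookrightarrow L^6$ and boundedness of $g,h'$, so it suffices to bound $\|\psi_t\|_{L^2}$ (finite at $t=0$ since $\psi_0\in H^2$). Differentiating in time, setting $w=\psi_t$, testing with $\overline w$ and taking imaginary parts, the cubic term and the $h''$-contribution give terms bounded by $\|\psi\|_{L^\infty}^2\|w\|_{L^2}^2$, while the $\partial_t[g(v)]$ term is bounded by $C(1+\|w\|_{L^2}^2)$. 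Inserting the Brezis--Gallouet inequality
\[
\|\psi\|_{L^\infty}\leq C\|\psi\|_{H^1}\Big(1+\sqrt{\log\big(1+\|\psi\|_{H^2}/\|\psi\|_{H^1}\big)}\Big)
\]
together with $\|\psi\|_{H^2}\lesssim\|w\|_{L^2}+1$ and the uniform $H^1$ bound turns this into a logarithmic Gronwall inequality of the form $\tfrac{d}{dt}\|w\|_{L^2}^2\leq C\big(1+\log(1+\|w\|_{L^2}^2)\big)\|w\|_{L^2}^2+C$, whose solutions cannot blow up in finite time. This bounds $\|\psi_t(t)\|_{L^2}$, hence $\|\psi(t)\|_{H^2}$, on all of $[0,T]$, and the blow-up alternative extends the local solution to $[0,T]$.

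Finally, for the mapping properties I would observe that every constant above depends on $\mathbf{u}$ only through the bounds on $\rho[\mathbf{u}]$ furnished by Lemma~\ref{lemmarhotermsu} and \eqref{maxprinc} and through the Jacobian bounds \eqref{JY}; thus $\mathbf{u}$ ranging over a bounded set of $C([0,T];C_0^2(\overline\Omega))$ yields $\psi[\mathbf{u}]$ bounded in $Z$. For continuity, if $\mathbf{u}_k\to\mathbf{u}$ then $\rho[\mathbf{u}_k]\to\rho[\mathbf{u}]$ in $C^1$ by Lemma~\ref{lemmarhotermsu}, the approximate Lagrangian flows converge, and hence $g(v[\mathbf{u}_k])\to g(v[\mathbf{u}])$; an $L^2$ energy estimate for $\psi[\mathbf{u}_k]-\psi[\mathbf{u}]$, whose forcing is the vanishing difference of potential coefficients times $\psi$, gives $L^2$ convergence, upgraded to $Z$ by interpolation against the uniform $H^2$ bound. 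I expect the main obstacle to be exactly this global $H^2$ estimate in the borderline two-dimensional cubic setting, which hinges on the logarithmic Brezis--Gallouet inequality, coupled with the uniform control of the time-dependent coefficient $g(v)$ (in particular $\partial_t[g(v)]$), which relies on the no-vacuum lower bound for $\rho[\mathbf{u}]$ and the compact-support hypotheses \eqref{gh}.
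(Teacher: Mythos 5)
Your proposal is correct, and it shares the paper's overall architecture: local well-posedness in $D(A)=H^2(\Omega_\mathbf{y})\cap H_0^1(\Omega_\mathbf{y})$, conservation of $\|\psi\|_{L^2}$, an $H^1$ energy bound in which the interaction term is tamed by \eqref{gh}, the no-vacuum bounds of Lemma \ref{lemmarhotermsu}/\eqref{maxprinc} and the boundedness of the Jacobian, and then a Brezis--Gallouet logarithmic inequality closing a log-Gronwall argument that prevents $H^2$ blow-up. The differences are in implementation. For local existence the paper invokes Segal's theorem (Lemma \ref{Segal}) with $A=\tfrac1i\Delta_\mathbf{y}$, which is your contraction argument packaged as a citation, so no real divergence there. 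The genuine divergence is the global $H^2$ bound: the paper writes $\psi$ in Duhamel form, uses that the group generated by $A$ is an isometry, and estimates $\|A[|\psi|^2\psi]\|_{L^2}$ directly, which forces it to control $|D^2(|\psi|^2\psi)|\lesssim|\psi|^2|D^2\psi|+|\psi|\,|\nabla\psi|^2$ via the Gagliardo--Nirenberg bound $\|\psi\|_{W^{1,4}}\lesssim\|\psi\|_{L^\infty}^{1/2}\|\psi\|_{H^2}^{1/2}$; you instead differentiate the equation in time, run an $L^2$ estimate on $w=\psi_t$ (with the cubic and $h''$ terms costing $\|\psi\|_{L^\infty}^2\|w\|_{L^2}^2$ and the $\partial_t[g(v)]$ term costing $C(1+\|w\|_{L^2}^2)$), and recover $\|\psi\|_{H^2}\lesssim 1+\|w\|_{L^2}$ from the equation and elliptic regularity. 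Both close the same logarithmic loop; the paper's route avoids differentiating in time --- note that your estimate for $w$ strictly requires a difference-quotient or regularization justification, since a priori $\psi_t\in C([0,T];L^2)$ only, so $\mathrm{Im}\int\Delta w\,\overline w=0$ is formal --- while your route avoids the second-derivative chain-rule bookkeeping and propagates exactly the quantity needed for the $C^1([0,T];L^2)$ assertion. Finally, for continuity of $\mathbf{u}\mapsto\psi[\mathbf{u}]$ the paper uses Aubin--Lions compactness plus uniqueness of the limit, whereas your quantitative $L^2$ stability estimate (forced by $\|g(v_k)-g(v_\infty)\|_{L^\infty}$) interpolated against the uniform $H^2$ bound is more direct and even yields a rate; both rest on the same ingredient, namely $v_k\to v_\infty$ in $C^1$ coming from Lemma \ref{lemmarhotermsu} and the smooth dependence of the approximate Lagrangian flow on $\mathbf{u}$. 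I see no genuine gap in your argument.
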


As this result is not explicitly covered by Brezis and Gallouet's one, we prove it next using an adaptation of their proof. For this we need the following two preliminary results.

The first one is due to Brezis and Gallouet and reads as

\begin{lemma}\label{lemmaBG}
There is a constant $C>0$ depending only on $\Omega$ such that
\[
||\psi||_{L^\infty(\Omega)}\leq C\big(1+\sqrt{\log[1+||\psi||_{H^2(\Omega)}]}\big),
\]
for every $\psi\in H^2(\Omega)$ with $||\psi||_{H^1(\Omega)}\leq 1$.
\end{lemma}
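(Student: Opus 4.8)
The plan is to reduce the inequality to the whole plane $\mathbb{R}^2$, where it can be proved by a Fourier splitting argument, and then transfer the result back to $\Omega$ by means of a bounded extension operator. Since $\Omega$ is smooth and bounded, there is a universal (Stein) extension operator $E\colon H^2(\Omega)\to H^2(\mathbb{R}^2)$ with $\|E\psi\|_{H^k(\mathbb{R}^2)}\leq C_\Omega\|\psi\|_{H^k(\Omega)}$ for $k=0,1,2$, where $C_\Omega$ depends only on $\Omega$. Because $\|\psi\|_{L^\infty(\Omega)}\leq\|E\psi\|_{L^\infty(\mathbb{R}^2)}$ and $E$ controls the $H^1$ and $H^2$ norms simultaneously, it suffices to prove the corresponding estimate for functions on $\mathbb{R}^2$ and apply it to $E\psi$; the normalization $\|\psi\|_{H^1(\Omega)}\leq 1$ then becomes $\|E\psi\|_{H^1(\mathbb{R}^2)}\leq C_\Omega$, which only affects the final constant.

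First I would record, for $w\in H^2(\mathbb{R}^2)$, the pointwise bound $|w(x)|\leq(2\pi)^{-1}\int_{\mathbb{R}^2}|\widehat{w}(\xi)|\,d\xi$ coming from Fourier inversion, and split the integral at a radius $R>0$ to be chosen. On the low-frequency region $|\xi|\leq R$, Cauchy--Schwarz against the weight $(1+|\xi|^2)^{-1}$ produces a factor $\big(\int_{|\xi|\leq R}(1+|\xi|^2)^{-1}\,d\xi\big)^{1/2}\leq C(\log(1+R^2))^{1/2}$ times $\|w\|_{H^1(\mathbb{R}^2)}$; this is exactly where the two-dimensionality generates the logarithm, since the radial integral $\int_0^R r(1+r^2)^{-1}\,dr$ diverges only logarithmically. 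On the high-frequency region $|\xi|>R$, Cauchy--Schwarz against the weight $(1+|\xi|^2)^{-2}$ produces a factor $\big(\int_{|\xi|>R}(1+|\xi|^2)^{-2}\,d\xi\big)^{1/2}\leq C(1+R^2)^{-1/2}$ times $\|w\|_{H^2(\mathbb{R}^2)}$.

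I would then optimize by choosing $R$ so that $1+R^2\simeq 1+\|w\|_{H^2(\mathbb{R}^2)}^2$, which forces the high-frequency contribution down to $O(1)$ and turns the low-frequency logarithm into $(\log(1+\|w\|_{H^2}^2))^{1/2}$. Collecting the two pieces yields
\[
\|w\|_{L^\infty(\mathbb{R}^2)}\leq C\Big(\|w\|_{H^1(\mathbb{R}^2)}\big(1+\log(1+\|w\|_{H^2(\mathbb{R}^2)})\big)^{1/2}+1\Big).
\]
Applying this with $w=E\psi$, using $\|E\psi\|_{H^1}\leq C_\Omega$ and $\|E\psi\|_{H^2}\leq C_\Omega\|\psi\|_{H^2(\Omega)}$, and absorbing all constants into a single $C$ depending only on $\Omega$, gives the claimed bound $\|\psi\|_{L^\infty(\Omega)}\leq C\big(1+\sqrt{\log[1+\|\psi\|_{H^2(\Omega)}]}\big)$.

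The routine parts are the two radial integral estimates and the optimization in $R$. The point demanding care, and the main obstacle, is the passage from $\mathbb{R}^2$ to the bounded domain: one must invoke an extension operator that is bounded on both $H^1$ and $H^2$ with a constant depending only on $\Omega$ and not on $\psi$, so that the normalization $\|\psi\|_{H^1(\Omega)}\leq 1$ is preserved up to a fixed multiplicative constant and does not secretly reintroduce a dependence on $\|\psi\|_{H^2}$. A boundary-intrinsic alternative avoids the extension by expanding $\psi$ in Dirichlet eigenfunctions of $-\Delta$ and splitting the spectrum at a level $K$; there the same logarithm arises from controlling $\sum_{\mu_k\leq K}|\phi_k(x)|^2$, but this requires the local Weyl asymptotics (equivalently a heat-kernel bound $\sum_k e^{-t\mu_k}|\phi_k(x)|^2\leq C/t$ in dimension two) for the on-diagonal spectral function, which is more delicate to quote cleanly; hence I would favor the extension argument.
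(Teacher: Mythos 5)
Your proposal is correct and follows essentially the same route as the proof the paper relies on: the paper does not prove this lemma itself but refers to \cite{BrGa}, and the original Brezis--Gallouet argument is exactly your combination of a (Stein) extension operator bounded simultaneously on $H^1$ and $H^2$ with a Fourier splitting at $|\xi|=R$, $1+R^2\simeq 1+\|w\|_{H^2}^2$, where the two-dimensional low-frequency integral produces the logarithm. The radial integral estimates, the optimization in $R$, and the reduction of the normalization $\|\psi\|_{H^1(\Omega)}\leq 1$ to $\|E\psi\|_{H^1(\mathbb{R}^2)}\leq C_\Omega$ are all handled correctly, so there is no gap.
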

We refer to \cite{BrGa} for the proof. 

The second preliminary result is a particular case of a well known theorem  due to Segal  in \cite{Se},  whose statement below is found in \cite{BrGa}. 

\begin{lemma}\label{Segal}
Assume $H$ is a Hilbert Space and $A:D(A)\subseteq H\to H$ is an $m$-accretive linear operator. Assume $F$ is a mapping from $D(A)$ into itself which is Lipschitz on every bounded subset of $D(A)$, the latter endowed with the graph norm $\|\psi\|_A:=\|\psi\|+\|A\psi\|$. 

Then, for every $\psi_0\in D(A)$ there exists a unique solution $\psi$ of the equation
\[
\begin{cases}
\frac{d\psi}{dt}+A\psi = F(\psi),\\
\psi(0)=\psi_0,
\end{cases}
\] 
defined for $t\in[0,T_{max})$ such that 
\[
\psi\in C^1([0,T_{max});H)\cap C([0,T_{max});D(A)),
\]
with the additional property that 
\[
\begin{cases}
\text{either }T_{max}=\infty,\\
\text{or }T_{max}<\infty \text{ and }\lim_{t\nearrow T_{max}} ||\psi||_A=\infty.
\end{cases}
\]
\end{lemma}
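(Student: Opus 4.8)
The plan is to recast the Cauchy problem in its Duhamel (mild) form, solve it locally by a contraction mapping in the graph-norm space, upgrade the mild solution to a classical one, and finally continue it to a maximal interval with the stated blow-up dichotomy.

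First I would record the semigroup structure. Since $A$ is $m$-accretive, the Lumer--Phillips theorem shows that $-A$ generates a strongly continuous contraction semigroup $\{T(t)\}_{t\ge 0}=\{e^{-tA}\}_{t\ge 0}$ on $H$. As $A$ commutes with its resolvent, $T(t)$ leaves $D(A)$ invariant and $AT(t)\psi=T(t)A\psi$ for $\psi\in D(A)$; hence, on the Banach space $Y:=(D(A),\|\cdot\|_A)$, one has $\|T(t)\psi\|_A=\|T(t)\psi\|+\|T(t)A\psi\|\le\|\psi\|_A$ and $\|T(t)\psi-\psi\|_A\to 0$ as $t\downarrow 0$, so that $T(t)$ restricts to a contraction $C_0$-semigroup on $Y$. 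For continuous $Y$-valued functions the equation is then equivalent to the fixed-point problem
\[
\psi(t)=T(t)\psi_0+\int_0^t T(t-s)F(\psi(s))\,ds.
\]

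Second, local existence. Fix $\psi_0\in D(A)$, set $R:=\|\psi_0\|_A+1$, and let $L=L(R)$ and $M=M(R)$ be the Lipschitz constant and the supremum of $F$ on the ball $B_R=\{\psi\in Y:\|\psi\|_A\le R\}$, both finite by the local Lipschitz hypothesis. On $\mathcal B=\{\psi\in C([0,\tau];Y):\psi(0)=\psi_0,\ \sup_t\|\psi(t)\|_A\le R\}$ define $\Phi$ by the right-hand side above. Using the contractivity of $T(t)$ on $Y$ one obtains $\|\Phi\psi(t)\|_A\le\|\psi_0\|_A+\tau M$ and, in the sup-in-$t$ graph norm, $\|\Phi\psi_1-\Phi\psi_2\|\le\tau L\,\|\psi_1-\psi_2\|$; choosing $\tau<\min\{1/M,1/L\}$ makes $\Phi$ a contraction of $\mathcal B$ into itself, and Banach's theorem yields a unique mild solution $\psi\in C([0,\tau];Y)$. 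The essential bookkeeping is that $\tau$ depends on $\psi_0$ only through the size of $\|\psi_0\|_A$.

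Third, regularity and uniqueness. Because $\psi\in C([0,\tau];Y)$ and $F$ maps $Y$ into $Y$ and is locally Lipschitz there, $t\mapsto F(\psi(t))$ lies in $C([0,\tau];Y)$, i.e. $F(\psi(t))\in D(A)$ with $AF(\psi(\cdot))$ continuous; together with $\psi_0\in D(A)$, the standard regularity theorem for inhomogeneous Cauchy problems in semigroup theory (Pazy) upgrades the mild solution to a classical one, so that $\psi(t)\in D(A)$, $\psi\in C^1([0,\tau];H)$, and $\psi'+A\psi=F(\psi)$ in $H$; the identity $A\psi=F(\psi)-\psi'$ then gives $\psi\in C([0,\tau];D(A))$. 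Uniqueness follows directly from the mild formulation: two solutions staying in a common ball $B_R$ satisfy $\|\psi_1(t)-\psi_2(t)\|_A\le L\int_0^t\|\psi_1-\psi_2\|_A\,ds$, so Gronwall forces coincidence.

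Finally, the maximal interval. Let $T_{max}$ be the supremum of existence times; by uniqueness the local solutions glue into a single solution on $[0,T_{max})$. Suppose $T_{max}<\infty$ but the conclusion $\lim_{t\nearrow T_{max}}\|\psi\|_A=\infty$ fails; then there are $K<\infty$ and $t_n\nearrow T_{max}$ with $\|\psi(t_n)\|_A\le K$. By the uniform-in-norm existence time from Step 2, solving from the datum $\psi(t_n)$ yields a solution on $[t_n,t_n+\tau_0)$ with $\tau_0=\tau_0(K)>0$ fixed; choosing $t_n>T_{max}-\tau_0/2$ and invoking uniqueness on the overlap extends $\psi$ strictly beyond $T_{max}$, contradicting maximality. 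Hence either $T_{max}=\infty$ or $\|\psi\|_A\to\infty$ as $t\nearrow T_{max}$. I expect the two genuinely delicate points to be verifying that $\{T(t)\}$ descends to a contraction $C_0$-semigroup on the graph-norm space $Y$ (so the fixed point lives in $C([0,\tau];D(A))$ and not merely in $H$) and invoking the correct inhomogeneous regularity result to pass from the mild to the classical solution; the contraction and continuation steps are then routine.
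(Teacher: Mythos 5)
Your proof is correct. The paper does not prove this lemma at all --- it is quoted as a special case of Segal's theorem, with citations to Segal \cite{Se} and to Brezis--Gallouet \cite{BrGa} --- and your argument (contraction mapping for the Duhamel formulation on the graph-norm space $Y=(D(A),\|\cdot\|_A)$, on which the contraction semigroup generated by $-A$ restricts because it commutes with $A$; the mild-to-classical upgrade via the standard inhomogeneous regularity theorem, legitimate since $F(\psi(\cdot))\in C([0,\tau];D(A))$; and continuation with existence time depending only on $\|\psi_0\|_A$, giving the blow-up dichotomy) is exactly the classical proof of that theorem, the only point left tacit being that $Y$ is complete, which holds because an $m$-accretive operator is closed.
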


Let us also state the very useful Aubin-Lions lemma (see \cite{JLi, Au}) for future reference. (A version of) Aubin-Lions lemma may be stated as (see \cite{Si})
\begin{lemma}[Aubin-Lions Lemma]\label{AubinLions}
Let $X_0$, $X$ and $X_1$ be Banach spaces such that
\[
X_0\subset X \subset X_1
\]
Supose that $X_0$ is compactly embedded in $X$ and that $X$ is continuously embedded in $X_1$. For $1\leq \alpha_0,\alpha_1 \leq \infty$, let
\[
W:=\{ v\in L^{\alpha_0}(0,T;X_0),\frac{dv}{dt}\in L^{\alpha_1}(0,T;X_1) \},
\]
under the norm
\[
||v||_W=||v||_{L^{\alpha_0}(0,T;X_0)}+\left\Vert \frac{dv}{dt} \right\Vert_{L^{\alpha_1}(0,T;X_1)}.
\]
Then,
\begin{enumerate}
\item[(i)] If $\alpha_0<\infty$, then the embedding of $W$ into $L^{\alpha_0}(0,T;X)$ is compact;
\item[(ii)] If $\alpha_0=\infty$ and $\alpha_1>1$, then the embedding of $W$ into $C([0,T];X)$ is compact.
\end{enumerate}
\end{lemma}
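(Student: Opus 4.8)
The plan is to reduce both parts to a single interpolation estimate of Ehrling type and then invoke Fréchet--Kolmogorov/Arzel\`a--Ascoli style criteria for compactness in Bochner spaces. First I would establish the following claim: because $X_0$ is compactly embedded in $X$ and $X$ is continuously (and injectively) embedded in $X_1$, for every $\eta>0$ there exists a constant $C_\eta>0$ such that
\[
\|w\|_X \leq \eta\,\|w\|_{X_0} + C_\eta\,\|w\|_{X_1}, \qquad \forall\,w\in X_0.
\]
I would prove this by contradiction: a sequence $\{w_k\}$ violating it, normalized so that $\|w_k\|_X=1$, is bounded in $X_0$ and has $\|w_k\|_{X_1}\to0$; by the compact embedding a subsequence converges in $X$ to some $w$ with $\|w\|_X=1$, while continuity of $X\hookrightarrow X_1$ forces $w=0$ in $X_1$, hence $w=0$ in $X$ by injectivity, a contradiction.

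For part (i), fix a bounded sequence $\{v_n\}$ in $W$ with $\alpha_0<\infty$; I want a subsequence converging in $L^{\alpha_0}(0,T;X)$. I would verify the two hypotheses of the vector-valued Riesz--Fréchet--Kolmogorov criterion (in Simon's form): (a) for each $0<t_1<t_2<T$ the integrals $\{\int_{t_1}^{t_2} v_n(s)\,ds\}$ are relatively compact in $X$, and (b) the time-translates satisfy $\|v_n(\cdot+h)-v_n\|_{L^{\alpha_0}(0,T-h;X)}\to0$ as $h\to0$, uniformly in $n$. Hypothesis (a) is immediate, since those integrals are bounded in $X_0$ (by H\"older from the $L^{\alpha_0}(0,T;X_0)$ bound) and hence precompact in $X$. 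For (b) I would apply the Ehrling estimate pointwise to $w=v_n(t+h)-v_n(t)$: the $X_0$-term integrates to at most $\eta$ times a uniform constant, while the $X_1$-term is controlled through
\[
\|v_n(t+h)-v_n(t)\|_{X_1} \leq \int_t^{t+h}\Big\|\tfrac{dv_n}{ds}\Big\|_{X_1}\,ds,
\]
whose $L^{\alpha_0}_t$-norm tends to zero as $h\to0$ uniformly in $n$ by the uniform bound on $dv_n/dt$ in $L^{\alpha_1}(0,T;X_1)$. Choosing $\eta$ small and then $h$ small yields (b), and the criterion delivers the desired compactness.

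For part (ii), with $\alpha_0=\infty$ and $\alpha_1>1$, I would instead prove relative compactness in $C([0,T];X)$ via the vector-valued Arzel\`a--Ascoli theorem. The bound on $dv_n/dt$ with $\alpha_1>1$ gives, by H\"older,
\[
\|v_n(t)-v_n(s)\|_{X_1} \leq |t-s|^{1-1/\alpha_1}\,\Big\|\tfrac{dv_n}{dt}\Big\|_{L^{\alpha_1}(0,T;X_1)},
\]
so $\{v_n\}$ is uniformly equicontinuous as $X_1$-valued maps; combining this with the Ehrling estimate applied to $w=v_n(t)-v_n(s)$ and the uniform $L^\infty(0,T;X_0)$ bound upgrades the equicontinuity to $X$. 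Since each value $v_n(t)$ is bounded in $X_0$ and hence lies in a fixed compact subset of $X$, Arzel\`a--Ascoli produces a subsequence converging in $C([0,T];X)$.

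The main obstacle I anticipate is technical rather than conceptual: passing from the a.e.-defined Bochner bounds to genuinely pointwise-in-time estimates. In part (ii) I must first produce, for each $n$, an absolutely continuous $X_1$-valued representative of $v_n$ (legitimate because $dv_n/dt\in L^{\alpha_1}(0,T;X_1)$ with $\alpha_1>1$), and then verify that each value $v_n(t)$---realized as the $X_1$-limit of the averages $\tfrac1h\int_t^{t+h}v_n$, which are bounded in $X_0$---inherits the $X_0$-bound and so lies in a fixed compact subset of $X$. The endpoint $\alpha_1=1$ in part (i) requires a small extra step: the translate functions $t\mapsto\int_t^{t+h}\|dv_n/ds\|_{X_1}\,ds$ obey a uniform $L^1$ bound of order $h$ together with a uniform $L^\infty$ bound, and interpolating the two restores the uniform-in-$n$ smallness of the $L^{\alpha_0}$-norm of translates as $h\to0$.
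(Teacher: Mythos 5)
The paper itself gives no proof of this lemma---it is quoted, ``(a version of) Aubin--Lions lemma may be stated as,'' directly from Simon \cite{Si}---and your proposal correctly reconstructs the standard proof from that cited source: the Ehrling interpolation inequality (proved by the usual compactness contradiction) combined with the vector-valued Riesz--Fr\'echet--Kolmogorov criterion for part (i) and Arzel\`a--Ascoli for part (ii), including the genuine endpoint subtleties you flag (the $\alpha_1=1$ case via $L^1$--$L^\infty$ interpolation of the translate bounds, and the pointwise realization of $v_n(t)$ through averages in part (ii)). One small imprecision worth noting: the $X_1$-limit of the $X_0$-bounded averages $\tfrac1h\int_t^{t+h}v_n$ need not itself ``inherit the $X_0$-bound'' in the sense of lying in $X_0$ (the $X_0$-ball need not be closed in $X_1$ without, e.g., reflexivity), but it does lie in the $X$-closure of that ball---a compact subset of $X$, since the ball is precompact there---which is exactly what your Arzel\`a--Ascoli application requires, so your stated conclusion that the values lie in a fixed compact subset of $X$ is correct as written.
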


With these tools at hand we can proceed with the proof of Lemma \ref{lemmapsitermsu}.

\begin{proof}[Proof of Lemma \ref{lemmapsitermsu}]
We want to solve the equation \eqref{Schotermsofu}. For this, following closely \cite{BrGa}, we apply Lemma \ref{Segal} with $H=L^2(\Omega_\mathbf{y})$, $A(\psi)=\tfrac{1}{i}\Delta_\mathbf{y}\psi$, $D(A)=H^2(\Omega_\mathbf{y})\cap H_0^1(\Omega_\mathbf{y})$ and 
\[
F(\psi)=\tfrac{1}{i}|\psi|^2\psi + \tfrac{\alpha}{i}g(v)h'(|\psi|^2)\psi.
\]

It is enough to show that $||\psi||_{H^2(\Omega_\mathbf{y})}$ remains bounded on every bounded interval. Fix $T>0$ and consider $\psi$ solving \eqref{Schotermsofu} on the time interval $[0,T)$.

First, Multiplying \eqref{Schotermsofu} by $\overline{\psi}$, taking imaginary part and integrating we have
\[
||\psi(t)||_{L^2(\Omega_\mathbf{y})}=||\psi_0||_{L^2(\Omega_\mathbf{y})}.
\]
Similarly, multiplying \eqref{Schotermsofu} by $\overline{\psi_t}$, taking real part and integrating we have
\begin{align}
&\frac{1}{2}\int_{\Omega_\mathbf{y}}|\nabla \psi|^2d\mathbf{y} + \frac{1}{4}\int_{\Omega_\mathbf{y}} |\psi|^4d\mathbf{y} \nonumber\\
&\hspace{5mm}= \frac{1}{2}\int_{\Omega_\mathbf{y}}|\nabla \psi_0|^2d\mathbf{y} + \frac{1}{4}\int_{\Omega_\mathbf{y}} |\psi_0|^4d\mathbf{y} + \int_0^t\int_{\Omega_\mathbf{y}}\alpha g(v)h(|\psi|^2)_t d\mathbf{y}ds. 
\end{align}

Now, 
\begin{align*}
\int_0^t\int_{\Omega_\mathbf{y}}\alpha g(v)h(|\psi|^2)_td\mathbf{y}ds=&\int_{\Omega_\mathbf{y}}\alpha g(v)h(|\psi|^2)d\mathbf{y}-\int_{\Omega_\mathbf{y}}\alpha g(v_0)h(|\psi_0|^2)d\mathbf{y}\\
&-\int_0^t\int_{\Omega_\mathbf{y}}\alpha g(v)_t h(|\psi|^2)d\mathbf{y}ds.
\end{align*}

Regarding the last term on the right hand side and using the definition of the Lagrangian transformation
\begin{align}
&\int_0^t\int_{\Omega_\mathbf{y}}\alpha g(v)_t h(|\psi|^2)d\mathbf{y}ds =\nonumber\\
&\hspace{30mm} \int_0^t\int_{\Omega_\mathbf{y}}\alpha g'(1/\rho)h(|\psi\circ \mathbf{Y}|^2)\left(\big(\frac{1}{\rho}\big)_t+\mathbf{u}^N\cdot\nabla\big(\frac{1}{\rho}\big)  \right)J_\mathbf{y} dx.\label{asd}
\end{align}

As $\mathbf{u}\in C([0,T];C_0^2(\overline{\Omega}))$, we have that $|J_\mathbf{y}|\leq C$ and using \eqref{gh} and Lemma \ref{lemmarhotermsu} we have that the right hand side of \eqref{asd} is bounded, that is
\[
\left| \int_0^t\int_{\Omega_\mathbf{y}}\alpha g(v)_t h(|\psi|^2)d\mathbf{y}ds \right|\leq C.
\]

This implies that
\begin{equation}
||\nabla \psi(t)||_{L^2(\Omega_\mathbf{y})}\leq C.\label{psiH1bound}
\end{equation}

Next, let $S(t)$ be the isometry group generated by $A$. Then,
\[
\psi(t)=S(t)\psi_0 + \tfrac{1}{i}\int_0^tS(t-s)\big( |\psi(s)|^2\psi(s) - \alpha g(v)h'(|\psi(s)|^2)\psi(s) \big)ds,
\]
and, so
\[
A\psi(t)=S(t)A\psi_0 + \tfrac{1}{i}\int_0^tS(t-s)A\left[\big( |\psi(s)|^2\psi(s) - \alpha g(v)h'(|\psi(s)|^2)\psi(s) \big)\right]ds.
\]

Consequently, 
\begin{align*}
||A\psi(t)||_{L^2(\Omega_\mathbf{y})}\leq &||A\psi_0||_{L^2(\Omega_\mathbf{y})} + \int_0^t||A[|\psi(s)|^2\psi(s)]|_{L^2(\Omega_\mathbf{y})}ds\\
 &+\alpha \int_0^t||A\big[g(v(s))h(|\psi(s)|^2)\psi(s) \big]||_{L^2(\Omega_\mathbf{y})}ds.
\end{align*}

Using \eqref{psiH1bound}, Lemma \ref{lemmaBG} can be used to show that
\begin{equation}
\int_0^t||A[|\psi(s)|^2\psi(s)]|_{L^2(\Omega_\mathbf{y})}ds\leq C\int_0^t||\psi(s)||_{H^2(\Omega_\mathbf{y})}\big( 1+\log[1+||\psi(s)||_{H^2(\Omega_\mathbf{y})}] \big)ds.\label{ineqBG}
\end{equation}
Indeed, observe that
\[
|D^2(|\psi|\psi)|\leq C(|\psi|^2|D^2\psi|+|\psi|\hspace{1mm}|\nabla\psi|^2),
\]
which implies
\[
||\hspace{1mm}|\psi|^2\psi||_{H^2(\Omega_\mathbf{y})}\leq C||\psi||_{L^\infty(\Omega_\mathbf{y})}^2||\psi||_{H^2(\Omega_\mathbf{y})} + C||\psi||_{L^\infty(\Omega_\mathbf{y})}||\psi||_{W^{1,4}(\Omega_\mathbf{y})}^2.
\]
But, Gagliardo-Nirenberg Inequality implies (recall that $\Omega\subseteq \mathbb{R}^2$)
\[
||\psi||_{W^{1,4}(\Omega_\mathbf{y})}\leq C||\psi||_{L^\infty(\Omega_\mathbf{y})}^{1/2}||\psi||_{H^2(\Omega_\mathbf{y})}^{1/2}.
\]
These two inequalities combined together with Lemma \ref{lemmaBG} imply \eqref{ineqBG}.

A similar argument shows that
\begin{align*}
&\int_0^t||A\big[g(v(s))h(|\psi(s)|^2)\psi(s) \big]||_{L^2(\Omega_\mathbf{y})}ds\\
&\hspace{30mm}\leq C+C\int_0^t||\psi(s)||_{H^2(\Omega_\mathbf{y})}\big( 1+\log[1+||\psi(s)||_{H^2(\Omega_\mathbf{y})}] \big)ds.
\end{align*}
Here we have used \eqref{psiH1bound} and Lemma \ref{lemmarhotermsu}.

Thus we conclude that
\begin{equation}
||\psi(t)||_{H^2(\Omega_\mathbf{y})}\leq C+C\int_0^t||\psi(s)||_{H^2(\Omega_\mathbf{y})}\big( 1+\log[1+||\psi(s)||_{H^2(\Omega_\mathbf{y})}] \big)ds.
\end{equation}
Denoting $G(t)$ the right hand side of this inequality we have that
\[
G'(t)\leq CG(t)(1+\log[1+G(t)]),
\]
which, implies that
\[
\frac{d}{dt}\log\big[ 1+\log[1+G(t)] \big]\leq C
\]

And hence we arrive at an estimate of the form
\[
||\psi(t)||_{H^2(\Omega_\mathbf{y})}\leq e^{b_1e^{b_2 t}},
\]
for some constants $b_1$ and $b_2$ and every $t\in [0,T)$. In particular
\[
||\psi(t)||_{H^2(\Omega_\mathbf{y})}\leq e^{b_1e^{b_2 T}}, \text{ for every }t\in[0,T).
\]
As this holds for every $T>0$ we conclude that $T_{max}=\infty$.

In order to conclude the proof we have to show the stated continuity of the map $\mathbf{u}\to \psi[\mathbf{u}]$. Let $\{\mathbf{u}_k \}_k$ be a sequence in $C([0,T];C^2(\overline{\Omega}))$ such that $\mathbf{u}_k\to \mathbf{u}_\infty\in C([0,T];C^2(\overline{\Omega}))$, and let $v_k=v[\mathbf{u}_k]$, $v_\infty=v[\mathbf{u}_\infty]$, $\psi_k=\psi[\mathbf{u}_k]$ and $\psi_\infty=\psi[\mathbf{u}_\infty]$. In light of Lemma \ref{lemmarhotermsu} and by the smoothness of the Lagrangian transformation we have that $v_k\to v_\infty$ in $C^1(\overline{\Omega}\times[0,T])$. Next, by Aubin-Lions lemma (Lemma \ref{AubinLions}) we have that there is a subsequence $\{\psi_{k_j}\}_j$ that converges in $C([0,T];H_0^1(\Omega))$ to a solution $\psi$ of the limit equation \eqref{Schotermsofu} with $v=v_\infty$. By uniqueness we have that $\psi=\psi[\mathbf{u}_\infty]$ and also that the whole sequence $\{\psi_k\}_k$ converges to $\psi$ in $C([0,T];H_0^1(\Omega))$, thus concluding the proof.
\end{proof}

Having these results we can apply the Faedo-Galerkin method in order to find solutions to the regularized system. First, for each $n\in \mathbb{N}$, we are going to look for a function $u_n$ that satisfies \eqref{appE2u} in an approximate way. Specifically, we demand that $u_n$ satisfies
\begin{flalign}
&\int_\Omega\rho_n \mathbf{u}_n \cdot\eta d\mathbf{x} - \int_\Omega \mathbf{m}_0 \cdot \eta d\mathbf{x}&\nonumber\\
&\hspace{2.mm}\qquad  + \int_0^t\int_\Omega \left(\text{div}(\rho_n\mathbf{u}_n\otimes\mathbf{u}_n) + \nabla (a\rho_n^\gamma+\delta\rho_n^\beta) + \varepsilon \nabla\mathbf{u}_n\cdot\nabla\rho_n\right)\cdot\eta d\mathbf{x}\hspace{.5mm}ds&\label{galerkin} \\
&\hspace{10.mm}\qquad= \int_0^t\int_\Omega \big(\nabla(\alpha \frac{J_\mathbf{y}}{\rho_n} g'(1/\rho_n)h(|\psi_n\circ \mathbf{Y}|^2)) +(\nabla\times\mathbf{H}_n)\times\mathbf{H}_n &\nonumber\\
&\hspace{20.mm}+ \mu \Delta\mathbf{u}_n + (\lambda + \mu)\nabla(\text{div}\mathbf{u}_n)\big)\cdot\eta d\mathbf{x}\hspace{.5mm}ds,&\nonumber
\end{flalign}
for any $t\in [0,T]$ and any $\eta\in X_n$, where $\rho_n=\rho[\mathbf{u}_n]$, $\mathbf{H}_n=\mathbf{H}[\mathbf{u}_n]$, $\psi_n=\psi[\mathbf{u}_n]$ and $\mathbf{Y}$ is Lagrangian transformation associated to the velocity field $u_n^N=P_N\hspace{.5mm}u_n$, with Jacobian $J_\mathbf{y}$. This formulation may be interpreted as a projection of equation \eqref{appE2u} onto the finite dimensional space $X_n$.

Let us rewrite this integral equation in a more suitable way. Given some function $\rho\in L^1(\Omega)$, consider the operator $\mathcal{M}[\rho]:X_n\to X_n^*$, where $X_n^*$ is the dual space of $X_n$, given by
\[
\langle \mathcal{M}[\rho]\mathbf{v},\mathbf{w}\rangle:=\int_\Omega \rho\mathbf{v}\cdot\mathbf{w}.
\]

Then, the operator $\mathcal{M}$ is invertible provided that $\rho$ is strictly positive on $\Omega$ and  the map $\rho\to\mathcal{M}^{-1}[\rho]$, mapping $L^1(\Omega)$ into $\mathcal{L}(X_n^*; X_n)$, satisfies
\begin{equation}
||\mathcal{M}[\rho]^{-1}||_{\mathcal{L}(X_n^*;X_n)}\leq \frac{1}{\inf_{\Omega}\rho}.\label{normM}
\end{equation}
Moreover, the identity
\[
\mathcal{M}[\rho^1]^{-1}-\mathcal{M}[\rho^2]^{-1}=\mathcal{M}[\rho^2]^{-1}\big( \mathcal{M}[\rho^2]-\mathcal{M}[\rho^1] \big) \mathcal{M}[\rho^1]^{-1},
\]
can be used to obtain
\begin{equation}
||\mathcal{M}[\rho^1]^{-1}-\mathcal{M}[\rho^2]^{-1}||_{\mathcal{L}(X_n^*;X_n)}\leq c(n,\underline{\rho})||\rho^1-\rho^2||_{L^1(\Omega)},\label{contM}
\end{equation}
for any $\rho^1$ and $\rho^2$ such that
\[
\inf_\Omega \rho^1, \inf_\Omega \rho^2 \geq \underline{\rho}.
\]

In connection with \eqref{galerkin} we also define the operator $\mathcal{N}:X_n\to X_n^*$ given by
\begin{flalign*}
&\langle \mathcal{N}[\mathbf{u}],\eta\rangle = -\int_\Omega \left(\text{div}(\rho\mathbf{u}\otimes\mathbf{u}) + \nabla (a\rho^\gamma+\delta\rho^\beta) + \varepsilon \nabla\mathbf{u}\cdot\nabla\rho\right)\cdot\eta d\mathbf{x}&\nonumber \\
&\hspace{30mm}+ \int_\Omega \big(\nabla(\alpha \frac{J_\mathbf{y}}{\rho} g'(1/\rho_n)h(|\psi|^2)) +(\nabla\times\mathbf{H})\times\mathbf{H} &\nonumber\\
&\hspace{65mm}+ \mu \Delta\mathbf{u} + (\lambda + \mu)\nabla(\text{div}\mathbf{u})\big)\cdot\eta d\mathbf{x},&
\end{flalign*}
with $\rho=\rho[\mathbf{u}]$, $\mathbf{H}=\mathbf{H}[\mathbf{u}]$ and $\psi=\psi[\mathbf{u}]$.

With this notation, identity \eqref{galerkin} can be rewritten as
\[
\mathbf{u}_n(t)=\mathcal{M}[\rho_n(t)]^{-1}\left(\mathbf{m}_0^* + \int_0^t \mathcal{N}[\mathbf{u}_n(s)] ds \right).
\]
This means that we are looking for a fixed point of the application $\mathcal{T}:C([0,T];X_n)\to C([0,T];X_n)$ given by
\[
\mathcal{T}[\mathbf{u}](t)=\mathcal{M}[\rho[\mathbf{u}](t)]^{-1}\left(\mathbf{m}_0^* + \int_0^t \mathcal{N}[\mathbf{u}(s)] ds \right).
\]

Using Lemmas \ref{lemmarhotermsu}, \ref{lemmaHtermsu} and \ref{lemmapsitermsu}, as well as \eqref{normM} and \eqref{contM} and Arzel\`{a}-Ascoli theorem it can be shown that $\mathcal{T}$ maps bounded sets in $C([0,T];X_n)$ into precompact sets in $C([0,T];X_n)$.

Moreover, define $\mathbf{u}_0\in X_n$ as being the only element in $X_n$ that satisfies
\[
\int_\Omega \rho_0\mathbf{u}_0\cdot\eta d\mathbf{x}=\int_\Omega \mathbf{m}_0\cdot\eta d\mathbf{x},\hspace{10mm}\text{for all }\eta\in X_n.
\]
Consider a ball $\mathcal{B}:=\{ \mathbf{v}\in C([0,T];X_n): \sup_{t\in[0,T]}||\mathbf{v}(t)-\mathbf{u}_0||_{X_n}\leq 1\}$. Then, $\mathcal{T}$ maps the ball $\mathcal{B}$ into itself, provided $T=T(n)$ is small enough. Consequently, Schauder's fixed point theorem guarantees the existence of at least one fixed point $\mathbf{u}_n$, $\mathbf{u}_n=\mathcal{T}[\mathbf{u}_n]$ which provides a solution to \eqref{galerkin}. 

Now, we want to find a solution to the regularized system as a limit of the sequence $\mathbf{u}_n$.  However, the approximate velocity field $\mathbf{u}_n$ is defined only on the time interval $[0,T(n)]$. Accordingly, we have to guarantee that this solution can be extended to a uniform over $n$ time interval $[0,T^*]$. In order to achieve this, we deduce next some a priori estimates on the fixed point $\mathbf{u}_n$ we found above that allow us to iterate the fixed point argument a finite number of times until we reach the whole time interval $[0,T^*]$.

In the case of the MHD system and in the case of the Navier Stokes system, the conservation of energy provides good enough global a priori estimates that guarantee boundedness of the fixed point globally in time. In our present situation, however, the short wave-long wave interaction turns the estimate more difficult as the energy of the system is not well balanced. As a consequence we do not obtain a global a priori estimate. Fortunately, we are able to bound from below the maximal time during which the estimates hold by some $T^N$ independent of $n$ that satisfies the properties stated in Theorem \ref{regsys}.

The a priori estimates are based on the usual energy estimates for the MHD equations, but rely on a bootstrap argument in order to accommodate the unbalance in the energy caused by the short wave-long wave interactions coupling terms. 

For convenience, we define $E_n(t)$ as in \eqref{defregEt} with $(\rho,\mathbf{u},\mathbf{H},\psi)$ replaced by $(\rho_n,\mathbf{u}_n,\mathbf{H}_n,\psi_n)$. That is
\begin{flalign}
&E_n(t)=\int_\Omega\left( \frac{1}{2}\rho_n|\mathbf{u}_n|^2 + \frac{a}{\gamma-1}\rho_n^\gamma +\frac{\delta}{\beta-1}\rho_n^\beta+\frac{1}{2}|\mathbf{H}_n|^2\right)d\mathbf{x}&\nonumber\\
&\hspace{20.mm}+\int_{\Omega_\mathbf{y}}\left( \frac{1}{2}|\nabla_\mathbf{y}\psi_n|^2 + \frac{1}{4}|\psi_n|^4 + \alpha g(v_n)h(|\psi_n|^2)\right)d\mathbf{y}&\nonumber\\
&\hspace{30.mm}+\int_0^t\int_\Omega (\mu |\nabla \mathbf{u}_n|^2 + (\lambda+\mu)(\text{div}\hspace{.5mm}\mathbf{u}_n)^2+\nu|\nabla\mathbf{H}_n|^2)d\mathbf{x}ds.
\end{flalign}

In the notation of Theorem \ref{regsys} we have the following key estimate.

\begin{lemma}\label{apriorigalerkin}
Let $T^N$ be given by \eqref{TN} and take $r\in(0,1)$. Assume that $\beta > \max\{ 2r/(2-r),2r/(1-r) \}$ and that $\varepsilon$ and $\alpha$ are small and satisfy $T^N>0$.  Then, for all $t \leq T^N$ we have
\begin{equation}
E_n(t)+\varepsilon\int_0^t\int_\Omega(a\gamma\rho_n^{\gamma-2} + \delta\beta\rho_n^{\beta-2})|\nabla\rho_n|^2d\mathbf{x}ds \leq E(0)+\varepsilon^{1/2} R.\label{ineqreg}
\end{equation}

Also,
\begin{equation}
||\varepsilon^{1/2}\nabla\rho_n||_{L^2(\Omega\times(0,T))} + ||\varepsilon^2\rho_{nt}||_{L^r(\Omega\times(0,T))} + ||\varepsilon^3 \Delta\rho_n||_{L^r(\Omega\times(0,T))}\leq C\label{rhotDeltarho}
\end{equation}
where $C$ is a universal constant independent of $\varepsilon$, $\alpha$, $n$ and $N$.
\end{lemma}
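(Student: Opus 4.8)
The plan is to derive the a priori estimate \eqref{ineqreg} by establishing the energy balance for the Faedo-Galerkin approximations and then carefully controlling the short wave-long wave interaction term, which is the source of the energy imbalance. I would proceed in four main stages.

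First, I would derive the energy identity for the regularized MHD part. Taking $\eta = \mathbf{u}_n(t)$ as the test function in \eqref{galerkin} (which is legitimate since $\mathbf{u}_n(t) \in X_n$) and differentiating in time, I recover the rate of change of the kinetic and internal energies together with the dissipation terms $\mu|\nabla\mathbf{u}_n|^2 + (\lambda+\mu)(\text{div}\,\mathbf{u}_n)^2$. Multiplying the continuity equation \eqref{appE2rho} by suitable functions of $\rho_n$ (namely $\frac{a\gamma}{\gamma-1}\rho_n^{\gamma-1} + \frac{\delta\beta}{\beta-1}\rho_n^{\beta-1}$) and integrating yields the pressure-potential energy evolution, and crucially produces the artificial-viscosity dissipation term $\varepsilon\int_\Omega(a\gamma\rho_n^{\gamma-2} + \delta\beta\rho_n^{\beta-2})|\nabla\rho_n|^2\,d\mathbf{x}$ appearing on the left of \eqref{ineqreg}. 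Here the role of the correction term $\varepsilon\nabla\mathbf{u}_n\cdot\nabla\rho_n$ in the momentum equation is precisely to cancel the spurious term that the artificial viscosity introduces into the kinetic energy balance, as explained in the text following \eqref{app2H'}. For the magnetic field I would multiply \eqref{appE2H} by $\mathbf{H}_n$, using $\text{div}\,\mathbf{H}_n = 0$ and the vector identities, to obtain the magnetic energy evolution with its dissipation $\nu|\nabla\times\mathbf{H}_n|^2$.

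Second, I would handle the Schr\"odinger contribution. Multiplying \eqref{appE2Scho} by $\overline{\psi_n}_t$, taking the real part and integrating over $\Omega_\mathbf{y}$ reproduces the computation already carried out in the derivation of \eqref{difE}: the time derivative of $\frac{1}{2}|\nabla_\mathbf{y}\psi_n|^2 + \frac{1}{4}|\psi_n|^4 + \alpha g(v_n)h(|\psi_n|^2)$ appears, and the interaction term $\mathbf{F}_\text{ext}\cdot\mathbf{u}_n$ from the momentum balance is \emph{exactly matched} by the term $-\alpha g(v_n)_t\, h(|\psi_n|^2)$ through the conversion formula between Eulerian and Lagrangian coordinates together with the Jacobian relation $d\mathbf{y} = J_\mathbf{y}\,d\mathbf{x}$. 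This algebraic cancellation, which is the content of the energy identity \eqref{difE}, is what allows all the interaction terms to be absorbed into the total energy $E_n(t)$, leaving no leftover coupling term in the balance. Summing the three contributions gives, at least formally, $\frac{d}{dt}E_n(t) + \varepsilon\int_\Omega(\cdots)|\nabla\rho_n|^2\,d\mathbf{x} = (\text{residual terms from the regularization})$.

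Third -- and this is where the imbalance the authors warn about enters -- the cancellation is no longer exact because the NLS equation is posed in the \emph{approximate} Lagrangian coordinate built from $\mathbf{u}_n^N = P_N\mathbf{u}_n$, whereas the force $\mathbf{F}_\text{ext}\cdot\mathbf{u}_n$ in the momentum equation is driven by the full velocity $\mathbf{u}_n$. The mismatch produces a residual term proportional to $\alpha$ involving $(\mathbf{u}_n - \mathbf{u}_n^N)$ and the Jacobian $J_\mathbf{y}$. The strategy to control it is a bootstrap: using \eqref{JY}, the Jacobian is bounded by $\exp[C_N(t + \int_0^t\|\mathbf{u}_n\|_{H_0^1}^2\,ds)]$, and the accumulated $H_0^1$-norm of $\mathbf{u}_n$ is itself controlled by $E_n$ through the dissipation integral. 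One sets up a continuation argument assuming $E_n(s) \le E(0) + \varepsilon^{1/2}R$ on $[0,t]$, uses this to bound $\int_0^t\|\mathbf{u}_n\|_{H_0^1}^2\,ds \le \frac{1}{\mu}(E(0)+\varepsilon^{1/2}R)$, and hence bounds $J_\mathbf{y}$ by $\exp[C_N(t + \frac{1}{\mu}(E(0)+\varepsilon^{1/2}R))]$. The residual interaction term is then of size $\alpha\exp(C_N t)\cdot(\text{const})$, and by the definition \eqref{TN} of $T^N$ this stays below $\varepsilon^2 \cdot(\text{const})$, hence below $\varepsilon^{1/2}$ for $\varepsilon$ small under the hypothesis $\varepsilon^2/\alpha \gg 1$. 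Closing the bootstrap on $[0,T^N]$ yields \eqref{ineqreg} with the slack $\varepsilon^{1/2}R$ on the right. I expect this balancing of the Jacobian growth $\exp(C_N t)$ against the smallness of $\alpha$, calibrated precisely by the definition of $T^N$, to be the main obstacle and the technical heart of the argument.

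Finally, for \eqref{rhotDeltarho} I would use \eqref{ineqreg} to extract $\varepsilon^{1/2}\|\nabla\rho_n\|_{L^2(\Omega\times(0,T))} \le C$ directly from the $\gamma=2$ (or interpolated) part of the dissipation integral, noting the density is bounded below away from vacuum at the Galerkin level by the maximum principle \eqref{maxprinc}. Then, viewing \eqref{appE2rho} as the parabolic equation $\rho_{nt} - \varepsilon\Delta\rho_n = -\text{div}(\rho_n\mathbf{u}_n)$ and applying the $L^r$ parabolic estimates of Lemma \ref{lemmarhotermsu}, together with the bounds on $\rho_n$ in $L^{\beta+1}$ and on $\mathbf{u}_n$ in $L^2 H_0^1$, gives the stated control on $\varepsilon^2\rho_{nt}$ and $\varepsilon^3\Delta\rho_n$ in $L^r$; the choice $\beta > \max\{2r/(2-r), 2r/(1-r)\}$ is exactly what is needed for the product $\rho_n\mathbf{u}_n$ and its divergence to land in $L^r$ by H\"older's inequality with the available exponents. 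The powers of $\varepsilon$ are arranged so that the constant $C$ is independent of all of $\varepsilon, \alpha, n, N$, which is what the passage to the limit in Theorem \ref{teoeps0} will require.
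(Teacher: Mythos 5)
Your overall architecture matches the paper's: the energy identity obtained by taking $\eta=\mathbf{u}_n$ in \eqref{galerkin}, the NLS computation producing the time derivative of the Schr\"odinger energy, and the bootstrap that calibrates the Jacobian growth $e^{C_N t}$ against $\alpha$ via the definition \eqref{TN} of $T^N$ so that $\alpha |J_\mathbf{y}|\le \varepsilon^2$. Your treatment of the $(\mathbf{u}_n-\mathbf{u}_n^N)$ mismatch is also correct. However, there is a genuine gap: the coupling defect has \emph{two} residual terms, not one. Since $\rho_n$ solves the continuity equation \emph{with artificial viscosity}, one has
\begin{equation*}
\frac{\text{div}\,\mathbf{u}_n}{\rho_n}=\Big(\frac{1}{\rho_n}\Big)_t+\mathbf{u}_n\cdot\nabla\Big(\frac{1}{\rho_n}\Big)+\varepsilon\frac{\Delta\rho_n}{\rho_n^2},
\end{equation*}
so besides the term you identified there is the residual $\alpha\int_\Omega g'(1/\rho_n)h(|\psi_n\circ\mathbf{Y}|^2)\,J_\mathbf{y}\,\varepsilon\Delta\rho_n/\rho_n^2\,d\mathbf{x}$. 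After the $T^N$ calibration, the $|\nabla\mathbf{u}_n|^2$ and $\rho_n^{\gamma-2}|\nabla\rho_n|^2$ contributions can be absorbed into the dissipation for small $\varepsilon$, but this one cannot: it leaves a source $C\varepsilon^3\int_\Omega|\Delta\rho_n|\,d\mathbf{x}$ on the right of the differential inequality for $E_n$, involving second derivatives of $\rho_n$ which the dissipation does not control.

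This missing term invalidates the order of your argument. You propose to close the energy bootstrap first and then deduce \eqref{rhotDeltarho} from \eqref{ineqreg}; in the paper the two are proved \emph{simultaneously}: the maximal-regularity bounds \eqref{LpLq}, \eqref{LpLqfrac} (not Lemma \ref{lemmarhotermsu}, which is a H\"older-space solvability statement) estimate $\|\varepsilon^3\Delta\rho_n\|_{L^r}$ in terms of $\|\rho_0\|_{W^{2,r}}$, $\|\rho_n\|_{L^\infty(0,T;L^\beta)}$, $\|\sqrt{\rho_n}\mathbf{u}_n\|_{L^\infty(0,T;L^2)}$ and $\|\mathbf{u}_n\|_{L^2(0,T;H_0^1)}$ --- that is, in terms of the energy itself --- yielding a self-referential inequality
\begin{equation*}
\|\varepsilon^3\Delta\rho_n\|_{L^r}\le C\varepsilon^2\|\rho_0\|_{W^{2,r}}+C\varepsilon\|\rho_0\|_{H^1}^2+C\varepsilon\big(E(0)+\|\varepsilon^3\Delta\rho_n\|_{L^r}\big)^{\frac1\beta+\frac12+\frac1{2r}},
\end{equation*}
which closes only under the exponent condition $\tfrac1\beta+\tfrac12+\tfrac1{2r}\le 1$ (this is the second constraint on $\beta$, which you instead attribute to H\"older for $\text{div}(\rho_n\mathbf{u}_n)\in L^r$; only the condition $\beta>2r/(2-r)$ plays that role) together with $\varepsilon$ small. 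It is exactly this loop that generates the slack $\varepsilon^{1/2}R$ in \eqref{ineqreg}. A further flaw: you derive $\|\varepsilon^{1/2}\nabla\rho_n\|_{L^2}\le C$ from the dissipation integral plus a lower bound on the density via the maximum principle \eqref{maxprinc}; but those pointwise bounds depend on $\|\text{div}\,\mathbf{u}_n\|_{L^\infty}$ and so are not uniform in $n$. The correct route is the $L^2$ parabolic energy estimate --- multiply \eqref{rhotermsofu} by $\rho_n$ and integrate --- whose right-hand side $\int_0^t\int_\Omega\rho_n^2\,\text{div}\,\mathbf{u}_n\,d\mathbf{x}\,ds$ is controlled by the energy bounds alone.
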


\begin{proof}
First, we deduce an energy identity in a similar way as we did when deducing \eqref{difE}. 

Taking $\eta=\mathbf{u}_n$ in \eqref{galerkin} and using equations \eqref{rhotermsofu}, \eqref{Htermsofu} we have
\begin{flalign}
&\frac{d}{dt}\int_\Omega\left( \frac{1}{2}\rho_n|\mathbf{u}_n|^2 + \frac{a}{\gamma-1}\rho_n^\gamma +\frac{\delta}{\beta-1}\rho_n^\beta+\frac{1}{2}|\mathbf{H}_n|^2\right)d\mathbf{x}&\nonumber\\
&\hspace{15mm}+\int_\Omega (\mu |\nabla \mathbf{u}_n|^2 + (\lambda+\mu)(\text{div}\hspace{.5mm}\mathbf{u}_n)^2+\nu|\nabla\mathbf{H}_n|^2)d\mathbf{x}\nonumber\\
&\hspace{30mm}+\varepsilon\int_\Omega(a\gamma\rho_n^{\gamma-2}+\delta\beta\rho_n^{\beta-2})|\nabla\rho_n|^2 d\mathbf{x}\nonumber\\
&\hspace{45mm}+\int_\Omega\alpha \frac{J_\mathbf{y}}{\rho_n}g'(1/\rho_n)h(|\psi_n\circ \mathbf{Y}|^2)\text{div}\hspace{.5mm}\mathbf{u}_n d\mathbf{x}=0.&&\label{En1}
\end{flalign}

As $\rho_n$ is a solution of equation \eqref{rhotermsofu} with $\mathbf{u}=\mathbf{u}_n$ we have that
\[
\frac{\text{div}\hspace{.5mm}\mathbf{u}_n}{\rho_n}=\left(\frac{1}{\rho_n}\right)_t + \mathbf{u}_n\cdot\nabla\left(\frac{1}{\rho_n}\right)+\varepsilon\frac{\Delta \rho_n}{\rho_n^2}
\]
Now, from the coordinate change and the definition of $v_n=v_n(\mathbf{y},t)$ we have
\[
v_{nt}=\left(\frac{1}{\rho_n}\right)_t + \mathbf{u}_n^N\cdot\nabla\left(\frac{1}{\rho_n}\right).
\]
Thus,
\begin{align*}
&\int_\Omega\alpha \frac{J_\mathbf{y}}{\rho_n}g'(1/\rho_n)h(|\psi_n\circ \mathbf{Y}|^2)\text{div}\hspace{.5mm}\mathbf{u}_n d\mathbf{x}=\int_{\Omega_\mathbf{y}}\alpha g(v_n)_t\hspace{.5mm} h(|\psi_n|^2) d\mathbf{y} \\
&\hspace{20mm}+ \int_\Omega\alpha g'(1/\rho_n)h(|\psi_n\circ\mathbf{Y}|^2)J_\mathbf{y}\left( \varepsilon \frac{\Delta\rho_n}{\rho_n^2}+(\mathbf{u}_n^N-\mathbf{u}_n)\cdot\frac{\nabla\rho_n}{\rho_n^2} \right)d\mathbf{x}
\end{align*}
Now, using equation \eqref{Schotermsofu} we have that
\begin{align*}
\int_{\Omega_\mathbf{y}}\alpha g(v_n)_t\hspace{.5mm} h(|\psi_n|^2) d\mathbf{y}=\frac{d}{dt}\int_{\Omega_\mathbf{y}}\left( \frac{1}{2}|\nabla_\mathbf{y}\psi_n|^2 + \frac{1}{4}|\psi_n|^4 + \alpha g(v_n)h(|\psi_n|^2)\right)d\mathbf{y}.
\end{align*}

Gathering this information in \eqref{En1} we have
\begin{flalign*}
&\frac{d}{dt}\int_\Omega\left( \frac{1}{2}\rho_n\mathbf{u}_n + \frac{a}{\gamma-1}\rho_n^\gamma +\frac{\delta}{\beta-1}\rho_n^\beta+\frac{1}{2}|\mathbf{H}_n|^2\right)d\mathbf{x}\\
&\hspace{3mm}+\frac{d}{dt}\int_{\Omega_\mathbf{y}}\left( \frac{1}{2}|\nabla_\mathbf{y}\psi|^2 + \frac{1}{4}|\psi|^4 + \alpha g(v)h(|\psi|^2)\right)d\mathbf{y}&\\
&\hspace{6mm}+\int_\Omega (\mu |\nabla \mathbf{u}_n|^2 + (\lambda+\mu)(\text{div}\hspace{.5mm}\mathbf{u}_n)^2+\nu|\nabla\mathbf{H}_n|^2)d\mathbf{x}\\
&\hspace{10mm}+\varepsilon\int_\Omega(a\gamma\rho_n^{\gamma-2}+\delta\beta\rho_n^{\beta-2})|\nabla\rho_n|^2 d\mathbf{x}&\\
&\hspace{13mm}=\int_\Omega\alpha g'(1/\rho_n)h(|\psi_n\circ\mathbf{Y}|^2)J_\mathbf{y}\left( \varepsilon \frac{\Delta\rho_n}{\rho_n^2}+(\mathbf{u}_n^N-\mathbf{u}_n)\cdot\frac{\nabla\rho_n}{\rho_n^2} \right)d\mathbf{x}.&
\end{flalign*}

In order to estimate the right hand side of this identity we use a bootstrap argument as follows. First, recalling \eqref{JY}, we have that
\begin{equation}
|J_\mathbf{y}(t)|\leq \exp \left[C_N\left(t+\int_0^t||u_n(s)||_{H_0^1(\Omega)}^2ds\right)\right].
\end{equation}

Next, we assume that 
\begin{equation}
\mu\int_0^t||u_n(s)||_{H_0^1(\Omega)}^2ds \leq E(0)+\varepsilon^{1/2} R\label{bootstrap}
\end{equation}
for all $t\leq T^N$. This is certainly the case for $t$ small enough. Accordingly, the following calculations hold as long as \eqref{bootstrap} is satisfied.

With this in mind, using \eqref{gh} and Poincar\'{e}'s inequality, we have that
\begin{flalign*}
&\frac{d}{dt}\int_\Omega\left( \frac{1}{2}\rho_n\mathbf{u}_n + \frac{a}{\gamma-1}\rho_n^\gamma +\frac{\delta}{\beta-1}\rho_n^\beta+\frac{1}{2}|\mathbf{H}_n|^2\right)d\mathbf{x}\\
&\hspace{7mm}+\frac{d}{dt}\int_{\Omega_\mathbf{y}}\left( \frac{1}{2}|\nabla_\mathbf{y}\psi|^2 + \frac{1}{4}|\psi|^4 + \alpha g(v)h(|\psi|^2)\right)d\mathbf{y}&\\
&\hspace{14mm}+\int_\Omega (\mu |\nabla \mathbf{u}_n|^2 + (\lambda+\mu)(\text{div}\hspace{.5mm}\mathbf{u}_n)^2+\nu|\nabla\mathbf{H}_n|^2)d\mathbf{x}\\
&\hspace{21mm}+\varepsilon\int_\Omega(a\gamma\rho_n^{\gamma-2}+\delta\beta\rho_n^{\beta-2})|\nabla\rho_n|^2 d\mathbf{x}&\\
&\leq \alpha C e^{C_N(T^N+\mu^{-1}(E(0)+\varepsilon^{1/2} R))}\int_\Omega\left( \varepsilon |\Delta\rho_n|+\mu|\nabla \mathbf{u}_n|^2 + a\gamma\rho_n^{\gamma-2}|\nabla\rho_n|^2\right)d\mathbf{x}.&
\end{flalign*}

Taking \eqref{TN} into consideration we see that
\begin{align*}
&\alpha C e^{C_N(T^N+\mu^{-1}(E(0)+\varepsilon^{1/2} R))}\int_\Omega \left( \varepsilon |\Delta\rho_n|+\mu|\nabla \mathbf{u}_n|^2 + a\gamma\rho_n^{\gamma-2}|\nabla\rho_n|^2\right)d\mathbf{x}\\
&\hspace{10mm}\leq C\varepsilon^3\int_\Omega|\Delta\rho_n| d\mathbf{x} + C\varepsilon^2\int_\Omega \mu|\nabla \mathbf{u}_n|^2d\mathbf{x} + C\varepsilon^2\int_\Omega a\gamma\rho_n^{\gamma-2}|\nabla\rho_n|^2 d\mathbf{x},
\end{align*}
and thus, if $\varepsilon\leq \min\{ (2C)^{-1}, (2C)^{-1/2} \}$ we have that
\begin{flalign}
&\frac{d}{dt}E_n(t)+\varepsilon\int_\Omega(a\gamma\rho_n^{\gamma-2}+\delta\beta\rho_n^{\beta-2})|\nabla\rho_n|^2 d\mathbf{x}\leq C\varepsilon^3\int_\Omega |\Delta\rho_n|d\mathbf{x},&\label{bootstrap2}
\end{flalign}
for all $t\leq T^N$, and some constant $C>0$ independent of $\alpha$, $\varepsilon$, $n$ and $N$. In particular given $r>1$ we have that
\begin{align}
&||\sqrt{\rho}\mathbf{u}_n||_{L^\infty(0,T;L^2(\Omega))}^2+||\rho_n||_{L^\infty(0,T;L^\beta(\Omega))}^\beta+||\mathbf{u}_n||_{L^2(0,T;H_0^1(\Omega))}^2\nonumber\\
&\hspace{60mm}\leq E(0) + C(r)||\varepsilon^3 \Delta\rho_n||_{L^r(\Omega\times(0,T))}.\label{bootstrap1}
\end{align}

Regarding the right hand side of this inequality, we are going to use $L^p-L^q$ estimates on the parabolic equation \eqref{rhotermsofu} in order to bound appropriately the $L^r(\Omega\times(0,T))$-norm of $\Delta \rho_n$ (for any fixed $T\leq T^N$). Said $L^p-L^q$ estimates read
\begin{align}
&||\rho_{t}||_{L^p(0,T;L^q(\Omega))}+||\varepsilon\Delta\rho||_{L^p(0,T;L^{q}(\Omega))}\nonumber\\
&\hspace{30mm}\leq c(p,q)(||\rho_0||_{W^{2,q}(\Omega)}+||\text{div}(\rho \mathbf{u})||_{L^p(0,T;L^q(\Omega))}).\label{LpLq}
\end{align}
for any $1<p,q<\infty$. Taking $p=q:=r$ in \eqref{LpLq} and applying it to $\rho_n$ we have
\begin{align}
&||\varepsilon\Delta\rho_n||_{L^r(\Omega\times(0,T))}\nonumber\\
&\hspace{5mm}\leq c(r)(||\rho_0||_{W^{2,r}(\Omega)}+||\text{div}(\rho_n \mathbf{u}_n)||_{L^r(\Omega\times(0,T))})\nonumber\\
  &\hspace{5mm}\leq c(r)(||\rho_0||_{W^{2,r}(\Omega)}+||\mathbf{u}_n\cdot\nabla\rho_n ||_{L^r(\Omega\times(0,T))}+||\rho_n \text{div}\mathbf{u}_n||_{L^r(\Omega\times(0,T))})\label{Lr}
\end{align}

On the one hand,
\begin{equation*}
||\rho_n \text{div}\mathbf{u}_n||_{L^{2\beta/(\beta+2)}(\Omega)}\leq ||\rho_n||_{L^\beta(\Omega)}||\mathbf{u}_n||_{H_0^1(\Omega)},
\end{equation*}
and therefore
\begin{equation}
||\rho_n \text{div}\mathbf{u}_n||_{L^2(0,T;L^{2\beta/(\beta+2)}(\Omega))}\leq ||\rho_n||_{L^\infty(0,T;L^\beta(\Omega))}||\mathbf{u}_n||_{L^2(0,t;H_0^1(\Omega))},\label{rhodivu}
\end{equation}

On the other hand, we need to estimate $||\nabla\rho_n\cdot \mathbf{u}_n||_{L^r(\Omega\times(0,T))}$, and for this we need a good estimate on $\nabla \rho_n$. Such an estimate is provided by the following $L^p-L^q$ estimate on equation \eqref{rhotermsofu}, analogue to \eqref{LpLq}
\begin{equation}
||\varepsilon\nabla\rho||_{L^p(0,T;L^{q}(\Omega))}\leq c(p,q)(||\rho_0||_{W^{1,q}(\Omega)}+||\text{div}(\rho \mathbf{u})||_{L^p(0,T;W^{-1,q}(\Omega))}).\label{LpLqfrac}
\end{equation}
At this point we choose $q=2$ and leave $p$ to be chosen conveniently. In connection with \eqref{LpLqfrac} we have that
 \begin{equation}
||\varepsilon\nabla\rho_n||_{L^p(0,T;L^2(\Omega))}\leq c(p)(||\rho_0||_{H^1(\Omega)}+||\rho_n \mathbf{u}_n||_{L^p(0,T;L^2(\Omega))}).\label{epsnablarho}
\end{equation}

By Sobolev's embedding for any $p'\in[1,\infty)$ we have, since $\Omega\subseteq \mathbb{R}^2$, that
\[
||\mathbf{u}_n||_{L^{p'}(\Omega)}\leq c(p')||\mathbf{u}_n||_{H_0^1(\Omega)}.
\]

This implies that 
\begin{equation}
||\rho_n\mathbf{u}_n||_{L^2(0,T;L^{p'}(\Omega))}\leq c(p')||\rho_n||_{L^\infty(0,T;L^\beta(\Omega))}||\mathbf{u}_n||_{L^2(0,T;H_0^1(\Omega))},
\end{equation}
for any $p'<\beta$. Furthermore, we have that
\[
||\rho_n\mathbf{u}_n||_{L^\infty(0,T;L^{2\beta/(\beta+1)}(\Omega))}\leq ||\rho_n||_{L^\infty(0,T;L^\beta(\Omega))}||\sqrt{\rho_n}\mathbf{u}_n||_{L^\infty(0,T;L^2(\Omega))}.
\]
Now, for $2<p'<\beta$ we have
\begin{equation}
||\rho_n\mathbf{u}_n||_{L^2(\Omega)}\leq ||\rho_n\mathbf{u}_n||_{L^{2\beta/(\beta+1)}(\Omega)}^{1-\sigma}||\rho_n\mathbf{u}_n||_{L^{p'}(\Omega)}^\sigma
\end{equation}
where, $\tfrac{1}{2}=(1-\sigma)\tfrac{\beta+1}{2\beta}+\sigma\tfrac{1}{p'}$ and $\sigma\in(0,1)$. Consequently, taking $p=\frac{2}{\sigma}>2$ we obtain
\begin{align*}
||\rho \mathbf{u}||_{L^p(0,T;L^2(\Omega))}&\leq ||\rho_n\mathbf{u}_n||_{L^\infty(0,T;L^{2\beta/(\beta+1)}(\Omega))}^{1-\sigma}||\rho_n\mathbf{u}_n||_{L^2(0,T;L^{p'}(\Omega))}^\sigma\\
&\leq ||\rho_n||_{L^\infty(0,T;L^\beta(\Omega))}||\sqrt{\rho_n}\mathbf{u}_n||_{L^\infty(0,T;L^2(\Omega))}^{1-\sigma}||\mathbf{u}_n||_{L^2(0,T;H_0^1(\Omega))}^\sigma.
\end{align*}

In connection with \eqref{epsnablarho} we have that
\begin{align*}
&||\varepsilon\nabla\rho||_{L^p(0,T;L^2(\Omega))}\\
&\hspace{2mm}\leq c(p)(||\rho_0||_{H^1(\Omega)}+||\rho_n||_{L^\infty(0,T;L^\beta(\Omega))}||\sqrt{\rho_n}\mathbf{u}_n||_{L^\infty(0,T;L^2(\Omega))}^{1-\sigma}||\mathbf{u}_n||_{L^2(0,T;H_0^1(\Omega))}^\sigma).
\end{align*}

Finally, we see that we can choose $p'$ so that $r=p/2$ and we have
\begin{flalign*}
||\varepsilon\nabla\rho_n\cdot\mathbf{u}_n||_{L^r(\Omega\times(0,T))}^r&\leq \int_0^t||\varepsilon\rho_n||_{L^2(\Omega)}^r||\mathbf{u}_n||_{L^{2r/(2+r)}(\Omega)}ds\\
  &\leq C\int_0^t||\varepsilon\rho_n||_{L^2(\Omega)}^r||\mathbf{u}_n||_{H_0^1(\Omega)}^rds\\
  &\leq C\left(\int_0^t||\varepsilon\rho_n||_{L^2(\Omega)}^p ds\right)^{r/p}\left(\int_0^t||\mathbf{u}_n||_{H_0^1(\Omega)}^2 ds\right)^{1/2}.
\end{flalign*}
In this way we have
\begin{flalign}
&||\varepsilon\nabla\rho_n\cdot\mathbf{u}_n||_{L^r(\Omega\times(0,T))}\leq C||\varepsilon \nabla\rho_n||_{L^p(0,T;L^2(\Omega))}||\mathbf{u}_n||_{L^2(0,T;H_0^1(\Omega))}^{1/r}\nonumber\\
&\leq C(||\rho_0||_{H^1(\Omega)}+||\rho_n||_{L^\infty(0,T;L^\beta(\Omega))}||\sqrt{\rho_n}\mathbf{u}_n||_{L^\infty(0,T;L^2(\Omega))}^{1-\sigma}||\mathbf{u}_n||_{L^2(0,T;H_0^1(\Omega))}^\sigma)\times\nonumber\\
&\hspace{50mm}\times||\mathbf{u}_n||_{L^2(0,T;H_0^1(\Omega))}^{1/r}.&&\label{nablarhou}
\end{flalign}

Then, for $\beta$ large enough so that $\tfrac{2\beta}{2+\beta}>r$ (which is equivalent to $\beta>\tfrac{2r}{2-r}$) we have that
\begin{equation}
||\rho_n\text{div}\mathbf{u}_n||_{L^r(\Omega\times(0,T))}\leq C||\rho_n\text{div}\mathbf{u}_n||_{L^2(0,T;L^{2\beta/(2+\beta)})}.
\end{equation}

Putting this together with \eqref{bootstrap1}, \eqref{Lr}, \eqref{rhodivu} and \eqref{nablarhou} we have that
\begin{align*}
&||\varepsilon^3 \Delta\rho_n||_{L^r(\Omega\times(0,T))}\leq C\varepsilon^2||\rho_0||_{W^{2,r}(\Omega)}+C\varepsilon ||\rho_0||_{H^1(\Omega)}^2 \\
&\hspace{40mm}+ C\varepsilon (E(0) + ||\varepsilon^3\Delta\rho_n||_{L^r(\Omega\times(0,T))})^{\tfrac{1}{\beta}+\tfrac{1}{2}+\tfrac{1}{2r}},
\end{align*} 
and consequently, if $\beta$ is large enough so that $\tfrac{1}{\beta}+\tfrac{1}{2}+\tfrac{1}{2r}\leq 1$ (in other words if $\beta\geq 2r/(1-r)$) and $\varepsilon$ is small we have
\begin{align*}
||\varepsilon^3 \Delta\rho_n||_{L^r(\Omega\times(0,T))}&\leq C\varepsilon(\varepsilon||\rho_0||_{W^{2,r}(\Omega)} + ||\rho_0||_{H^1(\Omega)}^2 + E(0)+1).
\end{align*}
In order to conclude, we observe that this last inequality together with \eqref{bootstrap2} and \eqref{bootstrap1} reconfirms our bootstrap assumption \eqref{bootstrap}, and implies \eqref{ineqreg}.
\end{proof}

\subsection{Convergence of the Faedo-Galerkin approximations}

The uniform estimates from Lemma \ref{apriorigalerkin} permit us to iterate the fixed point argument a finite number of times to extend the local approximate solutions to the interval $[0,T]$ (provided that $T\leq T^N$). The next step in the proof of Theorem \ref{regsys} consists in passing to the limit as $n\to\infty$. We point out that the convergence in the terms concerning $\rho_n$ and $\mathbf{u}_n$ can be justified similarly as in \cite[Section 7.3.6]{Fe} and the terms involving $\mathbf{H}_n$ may be treated as in \cite[Section 4]{HW}. Regarding the terms involving $\psi_n$ a direct application of Aubin-Lions Lemma (Lemma \ref{AubinLions}) yields the desired result. The details are as follows.

Let $N$, $\varepsilon$, $\alpha$ and $\delta$ be fixed, $0<T<T^N$ and $\{(\rho_n,\mathbf{u}_n,\mathbf{H}_n,\psi_n)\}_{n=1}^\infty$ be the approximate solution to the regularized system, defined in the time interval $[0,T]$, given by the Faedo-Galerkin method described above.

First, as $\rho_n$ satisfies \eqref{rhotermsofu}, we have that
\[
||\nabla \rho_n||_{L^2(\Omega\times(0,T))}\leq C(\varepsilon),
\]
for some constant that depends on $\varepsilon$, but is independent of $n$. This can be easily deduced by multiplying \eqref{rhotermsofu} by $\rho_n$ and integrating by parts. Using \eqref{rhotDeltarho} and \eqref{ineqreg}, Aubin-Lions Lemma \ref{AubinLions} implies that $\rho_n$ has a subsequence (not relabelled) such that
\begin{equation}
\rho_n\to \rho \text{ in } L^\beta(\Omega\times(0,T)).\label{rhontorho}
\end{equation}

Furthermore, by \eqref{ineqreg} we can assume that
\begin{equation}
\mathbf{u}_n\to \mathbf{u} \text{ weakly in } L^2(0,T;H_0^1(\Omega)).\label{untou}
\end{equation}

Next, we see that $\mathbf{H}_n$ satisfies the following equation, equivalent to \eqref{Htermsofu}, 
\begin{equation}
\begin{cases}
\mathbf{H}_t - \nabla \times(\mathbf{u}\times\mathbf{H})= \nu\Delta \mathbf{H}, &\text{ on } \Omega\times(0,T)\\
\text{div}\hspace{1mm}\mathbf{H}=0,&\text{ on } \Omega\times(0,T)\\
H=0, &\text{ on }\partial \Omega\\
H=H_0, &\text{ on }\{ t=0\}\times\Omega.
\end{cases}\label{Htermsofu'}
\end{equation}

Consequently, by \eqref{ineqreg} we can also use Aubin-Lions Lemma in order to conclude that (selecting a subsequence if necessary)
\begin{equation}
\mathbf{H}_n\to \mathbf{H} 
\end{equation}
strongly in $L^2(\Omega\times(0,T))$ and weakly(-*) in $L^2(0,T;H^1(\Omega))\cap L^\infty(0,T;L^2(\Omega))$. Furthermore, $\mathbf{H}$ satisfies
\[
\text{div}\mathbf{H}=0.
\] 

Now, from \eqref{ineqreg} and using the embedding we see that $\rho_n\mathbf{u}_n$ is uniformly bounded in $L^\infty(0,T;L^{m_\infty}(\Omega))$, where $m_\infty=\frac{2\gamma}{\gamma+1}$. Indeed,
\[
\int_\Omega |\rho_n\mathbf{u}_n|^{m_\infty} d\mathbf{x}\leq \left(\int_\Omega \rho_n |\mathbf{u}_n|^2d\mathbf{x}\right)^{1/2} \left(\int_\Omega \rho_n^\gamma d\mathbf{x}\right)^{1/\gamma}\leq C.
\]

Thus, as the convergence in \eqref{rhontorho} is strong we may assume that
\begin{equation}
\rho_n \mathbf{u}_n \to \rho \mathbf{u} \text{ weakly-* in } L^\infty(0,T;L^{m_\infty}(\Omega)).
\end{equation}

By the same token, we have that
\begin{equation}
(\nabla\times\mathbf{H}_n)\times\mathbf{H}_n\to (\nabla\times\mathbf{H})\times\mathbf{H},\label{HntoH}
\end{equation}
weakly in $L^1(\Omega\times(0,T))$, and
\begin{equation}
\nabla(\mathbf{u}_n\times\mathbf{H}_n)\to \nabla(\mathbf{u}\times\mathbf{H}),
\end{equation}
in the sense of distributions.

Next, in view of \eqref{Schotermsofu} Aubin-Lions lemma also yields
\begin{equation}
\psi_n\to \psi
\end{equation}
strongly in $C(0,T;L^2(\Omega))$ and weakly-* in $L^\infty(0,T;H_0^1(\Omega))$. 

Let us state (without proof) the following result, which is a consequence of the Ascoli-Arzel\`{a} theorem (see \cite[Corollary 2.1]{Fe}).
\begin{lemma}\label{coro2.1Fe}
Let $\overline{O}\subseteq \mathbb{R}^M$ be compact and let $X$ be a separable Banach space. Assume that $v_n:\overline{O}\to X^*$, $n=1,2,...$ is a sequence of measurable functions such that 
\[
ess \sup_{y\in\overline{O}}||v_n(y)||_{X^*}\leq C \hspace{5mm}\text{ uniformly in }n=1,2,...
\]
Moreover, let the family of (real) functions
\[
\langle v_n,\Phi\rangle :y\to\langle v_n(y),\Phi\rangle, \hspace{10mm}y\in\overline{O},n=1,2...
\]
be equi-continuous for any fixed $\Phi$ belonging to a dense subset in the space $X$.

Then, $v_n\in C(\overline{O};X_{weak}^*)$ for any $n=1,2,...$ and there exist $v\in C(\overline{O};X_{weak}^*)$ such that
\[
v_n\to v \text{ in } C(\overline{O};X_{weak}^*) \text{ as } n\to\infty,
\]
passing to a subsequence as the case may be.
\end{lemma}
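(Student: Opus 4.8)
The plan is to recognize the statement as a weak-$*$ analogue of the Arzel\`{a}-Ascoli theorem and to reduce it to the classical scalar version. The structural facts I would exploit are that the separability of $X$ makes the weak-$*$ topology metrizable on the closed ball $B_C:=\{\xi\in X^*:\|\xi\|_{X^*}\le C\}$, which is weak-$*$ compact by Banach-Alaoglu, and that by hypothesis every value $v_n(y)$ lies in $B_C$. I would fix once and for all a countable dense set $D=\{\Phi_k\}_{k\ge1}\subseteq X$ with $\|\Phi_k\|_X\le 1$, and equip $B_C$ with the metric $d(\xi,\eta):=\sum_k 2^{-k}\frac{|\langle\xi-\eta,\Phi_k\rangle|}{1+|\langle\xi-\eta,\Phi_k\rangle|}$, which induces the weak-$*$ topology there; convergence in $C(\overline O;X^*_{weak})$ then means $\sup_{y}d(v_n(y),v(y))\to0$.

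First I would establish the weak-$*$ continuity of each $v_n$, i.e. $v_n\in C(\overline O;X^*_{weak})$. For $\Phi\in D$ the scalar map $y\mapsto\langle v_n(y),\Phi\rangle$ is continuous, since equicontinuity of the family over $n$ forces continuity of each individual member. For a general $\Phi\in X$ I would run a three-$\varepsilon$ argument, approximating $\Phi$ by some $\Phi'\in D$ and using the uniform bound $\|v_n(y)\|_{X^*}\le C$ to absorb the errors $\langle v_n(y),\Phi-\Phi'\rangle$ and $\langle v_n(y'),\Phi'-\Phi\rangle$, thereby reducing to the continuity of $y\mapsto\langle v_n(y),\Phi'\rangle$. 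Implicitly one works with the weak-$*$ continuous representative of the merely measurable $v_n$, which exists precisely because of this equicontinuity.

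Next I would extract the limit. For each fixed $k$ the sequence $(\langle v_n(\cdot),\Phi_k\rangle)_n$ is a family of real continuous functions on the compact set $\overline O$ that is uniformly bounded by $C\|\Phi_k\|_X$ and equicontinuous by assumption, so the classical Arzel\`{a}-Ascoli theorem yields a uniformly convergent subsequence; a Cantor diagonal argument then produces a single subsequence (not relabeled) along which $\langle v_n(\cdot),\Phi_k\rangle\to g_k(\cdot)$ uniformly on $\overline O$, simultaneously for every $k$. For each fixed $y$ the sequence $(v_n(y))_n$ is then weak-$*$ Cauchy in the weak-$*$ complete set $B_C$: convergence of $\langle v_n(y),\Phi_k\rangle$ on the dense set $D$ together with the uniform bound controls $\langle v_n(y)-v_m(y),\Phi\rangle$ for every $\Phi\in X$. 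Hence $v_n(y)$ converges weak-$*$ to a unique $v(y)\in B_C$ with $\langle v(y),\Phi_k\rangle=g_k(y)$, and the same density-plus-bound reasoning shows $v\in C(\overline O;X^*_{weak})$.

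Finally, to upgrade pointwise weak-$*$ convergence to convergence in $C(\overline O;X^*_{weak})$, I would feed the uniform-in-$y$ limits $\sup_{y}|\langle v_n(y)-v(y),\Phi_k\rangle|\to0$ into the metric $d$: splitting its defining series at a large index $K$, the tail is bounded by $\sum_{k>K}2^{-k}$ uniformly in $n$ and $y$, while the finite head tends to zero uniformly in $y$, giving $\sup_{y\in\overline O}d(v_n(y),v(y))\to0$. The main obstacle, though essentially routine, is the repeated careful interplay between the density of $D$ and the uniform norm bound $C$: it is exactly this that lets one pass from control on the countable test set $D$ to control against all of $X$, and it is used in three distinct places---the weak-$*$ continuity of each $v_n$, the identification of the limit $v$, and the final uniform convergence.
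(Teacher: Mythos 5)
Your proof is correct, but there is nothing in the paper to compare it against line by line: the authors state this lemma explicitly \emph{without proof}, as a consequence of the Ascoli--Arzel\`a theorem, citing Corollary 2.1 of Feireisl's book \cite{Fe}. What you wrote is essentially the standard argument behind that citation: metrize the weak-$*$ topology on the ball $B_C$ (Banach--Alaoglu plus separability of $X$), obtain weak-$*$ continuity of each $v_n$ from the equicontinuity hypothesis on a dense set of test elements together with the uniform norm bound, apply the scalar Arzel\`a--Ascoli theorem to $\langle v_n(\cdot),\Phi_k\rangle$ for each $k$, diagonalize, identify the pointwise weak-$*$ limit $v(y)\in B_C$, and upgrade to uniform convergence via the truncated series defining the metric. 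So your route coincides with the intended one; it simply supplies the details the paper delegates to the reference. Two small wrinkles should be repaired in a final write-up. First, a countable set that is dense in $X$ cannot consist of elements with $\|\Phi_k\|_X\le 1$; either drop the normalization (the metric $d$ works without it, as each summand is already bounded by $1$), or take $\{\Phi_k\}$ dense in the unit ball of $X$, which still determines both the weak-$*$ topology on $B_C$ and the norms $\|v_n(y)\|_{X^*}$. Second, the $\Phi_k$ must be chosen from (or as scalar multiples of elements of) the dense subset appearing in the hypothesis, since that is the only place where equicontinuity is assumed; this is possible because every subset of the separable space $X$ is itself separable, and equicontinuity is preserved under scaling. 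The same remark applies to your three-$\varepsilon$ arguments: the approximating element $\Phi'$ must be taken from the hypothesis' dense set, not from an arbitrary dense set. With these adjustments the argument is complete, including the point you handle implicitly, namely that the essential supremum bound upgrades to an everywhere bound $\|v_n(y)\|_{X^*}\le C$ once the maps $y\mapsto\langle v_n(y),\Phi\rangle$ are known to be continuous for a dense (hence norm-determining) family of $\Phi$.
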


In view of \eqref{galerkin} and using \eqref{ineqreg} we see that the functions
\[
t\to\int_\Omega \rho_n\mathbf{u}_n \eta^j d\mathbf{x}
\] 
form a precompact system in $C([0,T])$ for any fixed $j$. This implies, by Lemma \ref{coro2.1Fe} that in fact
\begin{equation}
\rho_n \mathbf{u}_n\to\rho\mathbf{u} \text{ in }C([0,T];L_{weak}^{2\gamma/(\gamma+1)}(\Omega)).\label{rhountorhou}
\end{equation}

A similar argument shows that the mapping
\[
t\to \int_\Omega\mathbf{H}\varphi d\mathbf{x}
\]
is continuous for any test function $\varphi$.

Now, as $\gamma>1$, $L_{weak}^{2\gamma/(\gamma+1)}(\Omega)$ is compactly embedded into $H^{-1}(\Omega)$ and, consequently,
\begin{equation}
\rho_n\mathbf{u}_n\otimes \mathbf{u}_n \to \rho \mathbf{u}\otimes\mathbf{u}\label{rhouxuntorhouxu}
\end{equation}
weakly in $L^2(0,T;L^{c_2}(\Omega))$, where $c_2=2\gamma/(\gamma+1)>1$.

Next, as $\rho_n$ and $\rho$ are strong solutions of \eqref{rhotermsofu}, we have that 
\[
||\rho_n(t)||_{L^2(\Omega)}^2+2\varepsilon \int_0^t||\nabla \rho_n||_{L^2(\Omega)}^2ds = -\int_0^t\int_\Omega \rho_n^2\text{div}\mathbf{u}_n d\mathbf{x}ds + ||\rho_0||_{L^2(\Omega)}^2,
\]
and 
\[
||\rho(t)||_{L^2(\Omega)}^2+2\varepsilon \int_0^t||\nabla \rho||_{L^2(\Omega)}^2ds = -\int_0^t\int_\Omega \rho^2\text{div}\mathbf{u} d\mathbf{x}ds + ||\rho_0||_{L^2(\Omega)}^2
\]

Using \eqref{rhontorho} and \eqref{untou} we see that the right hands side of the former converges to its counterpart in the latter and thus, 
\[
||\nabla \rho_n||_{L^2(\Omega\times(0,T))}^2\to ||\nabla \rho||_{L^2(\Omega\times(0,T))}^2,
\]
and
\[
||\rho_n(t)||_{L^2(\Omega)}^2\to||\rho(t)||_{L^2(\Omega)}^2
\]
for any $t\in[0,T]$, which implies the strong convergence 
\[
\nabla\rho_n \to \nabla \rho \textbf{ in }L^2(\Omega\times (0,T)).
\]

With this we conclude that 
\[
\nabla \mathbf{u}_n\cdot\nabla\rho_n\to \nabla\mathbf{u}\cdot\nabla \rho
\]
in the sense of distributions.

Finally, recalling the definition of $\mathbf{u}_n^N$ through \eqref{uN}, we note that the weak convergence in \eqref{untou} implies the strong convergence
\[
\mathbf{u}_n^N\to \mathbf{u}^N
\]
which implies that the sequence Jacobians of the Lagrangian transformation $J_{\mathbf{y}n}$ defined through $\mathbf{u}_n^N$ converge strongly to the corresponding one related to $\mathbf{u}^N$.

With this we have shown that equations \eqref{appE2rho}-\eqref{appE2Scho} are satisfied in the sense  of distributions (equation \eqref{galerkin} can be verified by taking test functions of the form $\psi(t)\eta_j(x)$, where $\psi\in C_0^\infty(0,T)$) by the limit function $(\rho,\mathbf{u},\mathbf{H},\psi)$ as each term appearing on those equations is the limit in the sense of distributions of the respective terms corresponding to the Faedo-Galerkin approximation $(\rho_n,\mathbf{u}_n,\mathbf{H}_n,\psi_n)$. We have also shown that the initial and boundary conditions \eqref{appE20}, \eqref{appE2bound} are satisfied in the sense of distributions. 

Lastly, inequality \eqref{regnergy} is a consequence of \eqref{ineqreg} and this completes the proof of Theorem \ref{regsys}.

\section{Vanishing artificial viscosity and interaction coefficients}\label{limit1}

Theorem \ref{regsys} guarantees the existence of solutions to the Short Wave-Long Wave Interactions regularized system \eqref{appE2rho}-\eqref{appE2Scho}. Our next goal is to show that the sequence (or a subsequence) of solutions to this system converge to a global solution of the of the decoupled limit system when $(\varepsilon,\alpha,N,\delta)\to (0,0,\infty,0)$. In this Section we analyze the limit as $(\varepsilon,\alpha,N)\to (0,0,\infty)$, leaving $\delta>0$ fixed. As pointed out before, we can do all of of this as long as 
\begin{equation}
\left(\frac{\varepsilon^2}{\alpha}\right)^{1/C_N}\to \infty.\label{relepsalphN}
\end{equation}

In order to achieve this, we essentially adapt the arguments in \cite[Section 7.4]{Fe} and in \cite{HW}. 

The key point in the argument is to show that the sequence of densities converges strongly, in order to account for the nonlinearites from the pressure terms in the momentum equation \eqref{app2u}. This is not straightforward, as it was in the previous section, since we loose regularity of the density as $\varepsilon\to 0$. In particular, an argument like that of Aubin-Lions lemma does not apply. In this direction, we can exploit the weak continuity properties of the effective viscous flux $p(\rho)-(\lambda+2\mu)\text{div}\mathbf{u}$, originally discovered by D.~Hoff (\cite{Ho}) and  P.-L. Lions (\cite{Li}).

Let us point out that the terms involving the velocity field, the magnetic field and the wave function can be treated essentially as in the previous Section. Regarding the strong convergence of densities, the proof of weak continuity of the effective viscous flux found in \cite{HW} (cf. \cite{Fe}) can be adapted with no major difficulties once we realize that \eqref{regnergy}, \eqref{JY}, \eqref{TN} and \eqref{gh} imply that the extra term, due to the SW-LW interactions, appearing in the momentum equation
\[
\alpha \nabla(\frac{J_{\mathbf{y}}}{\rho}g'(1/\rho)h(\psi|^2))
\]
tends to zero in the sense of distributions as $(\varepsilon,\alpha,N)\to(0,0,\infty)$ satisfying \eqref{relepsalphN}. Accordingly, and to avoid the overload of notation, we may assume that $N$ and $\alpha$ tend to $\infty$ and $0$ respectively as functions of $\varepsilon$ and denote by $(\rho_\varepsilon,\mathbf{u}_\varepsilon,\mathbf{H}_\varepsilon,\psi_\varepsilon)$ the solution of the regularized system provided by Theorem \ref{regsys}.

The plan is as follows. First we show that $\rho_\varepsilon$ is uniformly (in $\varepsilon$) bounded  in $L_{loc}^{\beta+1}(\Omega\times(0,T))$ so that we can ensure that $\delta\rho^{\beta}$ and $a\rho^\gamma$ have (weakly) convergent subsequences. We know from Theorem \ref{regsys} that $\rho_\varepsilon\in L^{\beta+1}(\Omega\times(0,T))$ for each $\varepsilon$, but we have not yet shown that they are uniformly bounded in this space. 

Second, we prove the continuity of the effective viscous flux. And finally, we use this last result in order to show that $\overline{\rho \log\rho}=\overline{\rho}\log \overline{\rho}$ where the over line stands for a weak limit of the sequence indexed by $\varepsilon$. This last bit of information is enough to conclude the strong convergence of the densities due to the following result, which we state without proof (see \cite[Theorem 2.11]{Fe}).
\begin{lemma}\label{teo2.11}
Let $O\subseteq \mathbb{R}^N$ be a measurable set and $\{ \mathbf{v}_n\}_{n=1}^\infty$ a sequence of functions in $L^1(O;\mathbb{R}^M)$ such that
\[
\mathbf{v}_n\to\mathbf{v} \text{ weakly in }  L^1(O;\mathbb{R}^M).
\]
Let $\Phi:\mathbb{R}^M\to (-\infty,\infty]$ be a lower semi-continuous convex function such that $\Phi(\mathbf{v}_n)\in L^1(O)$ for any $n$ and
\[
\Phi(\mathbf{v}_n)\to\overline{\Phi(\mathbf{v})} \text{ weakly in } L^1(O).
\]

Then,
\[
\Phi(\mathbf{v})\leq \overline{\Phi(\mathbf{v})} \text{ a.a. on } O.
\]

If, moreover, $\Phi$ is strictly convex on an open convex set $U\subseteq \mathbb{R}^M$, and
\[
\Phi(\mathbf{v})=\overline{\Phi(\mathbf{v})} \text{ a.a. on }O,, 
\]
then,
\[
\mathbf{v}_n(\mathbf{y})\to\mathbf{v}(\mathbf{y}) \text{ for a.e. } \mathbf{y}\in\{ \mathbf{y}\in O:\mathbf{v}(\mathbf{y})\in U\},
\]
extracting a subsequence as the case may be.
\end{lemma}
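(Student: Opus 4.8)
The plan is to treat the two assertions separately: the inequality $\Phi(\mathbf{v})\le\overline{\Phi(\mathbf{v})}$ is the weak lower semicontinuity of the convex integrand, while the rigidity statement is a Jensen-equality argument powered by strict convexity. Throughout I would exploit that a weakly convergent sequence in $L^1$ is, by the Dunford--Pettis theorem, equi-integrable; in particular $\{\Phi(\mathbf{v}_n)\}$ is equi-integrable, which is what makes its weak limit amenable to a Young-measure representation.

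For the first part I would represent the convex lower semicontinuous $\Phi$ as a countable supremum of affine minorants, $\Phi(\mathbf{z})=\sup_{k}\ell_k(\mathbf{z})$ with $\ell_k(\mathbf{z})=a_k\cdot\mathbf{z}+b_k$; countably many suffice because $\mathbb{R}^M$ is separable. Fix $k$ and a measurable $A\subseteq O$ of finite measure. Since $\Phi(\mathbf{v}_n)\ge\ell_k(\mathbf{v}_n)$ pointwise,
\begin{equation*}
\int_A\Phi(\mathbf{v}_n)\,dy\ \ge\ \int_A\ell_k(\mathbf{v}_n)\,dy,
\end{equation*}
and letting $n\to\infty$ the left side tends to $\int_A\overline{\Phi(\mathbf{v})}\,dy$ (weak $L^1$ convergence tested against $\chi_A$), while the right side tends to $\int_A\ell_k(\mathbf{v})\,dy$ (weak convergence of $\mathbf{v}_n$ against the affine, bounded-on-$A$ function). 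As $A$ is arbitrary this yields $\overline{\Phi(\mathbf{v})}\ge\ell_k(\mathbf{v})$ a.e.\ on $O$; discarding a countable union of null sets and taking the supremum over $k$ gives $\overline{\Phi(\mathbf{v})}\ge\Phi(\mathbf{v})$ a.e.

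For the rigidity part I would pass to a subsequence generating a Young measure $\{\nu_y\}_{y\in O}$. The barycenter identity reads $\mathbf{v}(y)=\int_{\mathbb{R}^M}\lambda\,d\nu_y(\lambda)$, and by equi-integrability of $\{\Phi(\mathbf{v}_n)\}$ one identifies $\overline{\Phi(\mathbf{v})}(y)=\int_{\mathbb{R}^M}\Phi(\lambda)\,d\nu_y(\lambda)$. On the set $\{\mathbf{v}(y)\in U\}$ the hypothesis $\Phi(\mathbf{v})=\overline{\Phi(\mathbf{v})}$ is precisely equality in Jensen's inequality. Choosing an affine support $\ell$ of $\Phi$ at $\bar\lambda:=\mathbf{v}(y)$, so that $\Phi\ge\ell$ with $\Phi(\bar\lambda)=\ell(\bar\lambda)$, equality forces $\int(\Phi-\ell)\,d\nu_y=0$; since $\Phi-\ell\ge0$, the measure $\nu_y$ is supported on the convex sublevel set $K:=\{\Phi=\ell\}=\{\Phi-\ell\le 0\}$. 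Now $\bar\lambda\in U\cap K$, with $U$ open and $K$ convex, and $\Phi$ is strictly convex on $U$, so $\Phi|_{U\cap K}$ is simultaneously affine and strictly convex, whence $U\cap K=\{\bar\lambda\}$; convexity of $K$ together with openness of $U$ then forces $K=\{\bar\lambda\}$. Thus $\nu_y=\delta_{\mathbf{v}(y)}$ for a.e.\ $y$ with $\mathbf{v}(y)\in U$. A Young measure that is a.e.\ a Dirac mass on a set encodes convergence in measure there, so a further subsequence of $\mathbf{v}_n$ converges to $\mathbf{v}$ pointwise a.e.\ on $\{\mathbf{v}\in U\}$, which is the claim.

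The main obstacle I anticipate is not the convexity bookkeeping but controlling the \emph{escape of mass to infinity} in the Young-measure representation: one must know that $\nu_y$ is a genuine probability measure carrying the full mass and that $\int\Phi\,d\nu_y$ really equals $\overline{\Phi(\mathbf{v})}(y)$ rather than undershooting it. Both are consequences of the equi-integrability furnished by Dunford--Pettis (for $\mathbf{v}_n$ and for $\Phi(\mathbf{v}_n)$), so the crux is to invoke this tightness with care. An alternative, Young-measure-free route via the nonnegative Bregman functional $g_n=\Phi(\mathbf{v}_n)-\Phi(\mathbf{v})-\partial\Phi(\mathbf{v})\cdot(\mathbf{v}_n-\mathbf{v})$ meets the same difficulty, now in the guise of integrability of the subgradient $\partial\Phi(\mathbf{v})$ and the need to preclude $|\mathbf{v}_n|\to\infty$.
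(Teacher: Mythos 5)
You should first note that the paper contains no proof of this lemma at all: it is quoted verbatim from Feireisl's book and explicitly "stated without proof" with the citation \cite[Theorem 2.11]{Fe}, so the only proof to compare against is Feireisl's. Your argument is essentially correct, and its second half is genuinely different from that proof. The first part (countable family of affine minorants $\ell_k$, tested against characteristic functions of finite-measure sets) coincides with the standard argument. For the rigidity part, the cited proof does not use Young measures: it exhausts $\{\mathbf{v}\in U\}$ by the sets $O_k=\{y:\ |\mathbf{v}(y)|\le k,\ \mathrm{dist}(\mathbf{v}(y),\partial U)\ge 1/k\}$, on which $\Phi$ (finite, hence locally Lipschitz, on $U$) admits a bounded measurable selection $\xi(y)\in\partial\Phi(\mathbf{v}(y))$; then the nonnegative Bregman quantity $g_n=\Phi(\mathbf{v}_n)-\Phi(\mathbf{v})-\xi\cdot(\mathbf{v}_n-\mathbf{v})$ satisfies $\int_{O_k}g_n\to 0$, because $\Phi(\mathbf{v}_n)\rightharpoonup\overline{\Phi(\mathbf{v})}=\Phi(\mathbf{v})$ and $\mathbf{v}_n\rightharpoonup\mathbf{v}$ can be tested against $\xi\chi_{O_k}\in L^\infty$; hence $g_n\to 0$ a.e.\ on $O_k$ along a subsequence, and strict convexity upgrades this to $\mathbf{v}_n\to\mathbf{v}$ a.e.\ (the supporting plane at $\mathbf{v}(y)$ touches the graph only at $\mathbf{v}(y)$, by the same segment argument you use to show $K\cap U=\{\bar\lambda\}$). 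So the "alternative, Young-measure-free route" you dismiss at the end is precisely the proof behind the paper's citation, and the obstacle you attach to it — integrability of the subgradient, escape of $|\mathbf{v}_n|$ to infinity — is exactly what the localization to $O_k$ removes. Your Young-measure route costs heavier machinery but buys the cleaner structural statement (equality in Jensen forces $\nu_y=\delta_{\mathbf{v}(y)}$), and your geometric observation that a convex set $K$ with $K\cap U=\{\bar\lambda\}$ and $\bar\lambda\in U$ open must reduce to $\{\bar\lambda\}$ is correct and is the key point in either approach.

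One genuine, though repairable, inaccuracy: the identity $\overline{\Phi(\mathbf{v})}(y)=\int\Phi\,d\nu_y$ that you "identify" is not available for merely lower semicontinuous $\Phi$. The fundamental theorem of Young measures gives equality only for \emph{continuous} integrands whose compositions are equi-integrable; since $\Phi$ here may take the value $+\infty$ and need not be continuous, what you get (after subtracting an affine minorant to make the integrand nonnegative) is only the Fatou-type inequality $\overline{\Phi(\mathbf{v})}(y)\ge\int\Phi\,d\nu_y$ a.e. Fortunately this one-sided bound is all your argument needs: combined with Jensen's inequality $\int\Phi\,d\nu_y\ge\Phi(\mathbf{v}(y))$ and the hypothesis $\overline{\Phi(\mathbf{v})}=\Phi(\mathbf{v})$ a.e., it collapses to equality in both steps, and the support argument proceeds unchanged; but the claim should be stated as an inequality. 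You should also make explicit the (true, but not immediate) fact that strict convexity of $\Phi$ on the open convex set $U$ forces $\Phi$ to be finite, hence continuous and subdifferentiable, on $U$ — this is what guarantees the supporting affine function $\ell$ at $\bar\lambda=\mathbf{v}(y)$ exists.
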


From this point on, $T>0$ will denote an arbitrary prefixed time and $C>0$ will be a constant that may change from line to line being independent of $\varepsilon$, $\alpha$ and $N$. We also assume that $\delta>0$ is fixed and that $(\varepsilon,\alpha,N)\to(0,0,\infty)$ satisfying \eqref{relepsalphN}. Accordingly, we can also assume that $(\rho^\varepsilon,\mathbf{u}^\varepsilon,\mathbf{H}^\varepsilon,\psi^\varepsilon)$ are all defined in the time interval $[0,T]$ and satisfy \eqref{regnergy}.

\subsection{Higher integrability of the density}

This subsection is devoted to the proof of the following estimate.

\begin{lemma}\label{rhobeta+1}
For any compact $O\subseteq (\Omega\times(0,T))$ there is a constant $c=c(O)$ independent of $\varepsilon$ (and $\alpha$ and $N$) such that
\begin{equation}
\delta\int_O \rho^{\beta+1}d\mathbf{x}\leq c(O).\label{estrhobeta+1}
\end{equation}
\end{lemma}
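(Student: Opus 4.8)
The plan is to adapt the Bogovskii (inverse-divergence) test-function technique of Lions and Feireisl, as used in \cite{HW, Fe}, to the regularized momentum equation \eqref{appE2u}, while carefully accounting for the genuinely new contributions coming from the magnetic field, the interaction force, and the artificial viscosity. Let $\mathcal{B}$ denote the Bogovskii operator on $\Omega$, the bounded right inverse of the divergence with $\text{div}\,\mathcal{B}[f] = f - \frac{1}{|\Omega|}\int_\Omega f$ and the estimates $\|\mathcal{B}[f]\|_{W_0^{1,p}(\Omega)} \leq C\|f\|_{L^p(\Omega)}$ and $\|\mathcal{B}[\text{div}\,\mathbf{g}]\|_{L^p(\Omega)} \leq C\|\mathbf{g}\|_{L^p(\Omega)}$ for $\mathbf{g}\cdot\mathbf{n}|_{\partial\Omega}=0$. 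Since $O$ is compact in $\Omega\times(0,T)$, I fix cutoffs $\phi\in C_c^\infty(\Omega)$ and $\eta\in C_c^\infty(0,T)$ with $0\leq\phi,\eta\leq1$ and $\phi\,\eta\equiv1$ on a neighborhood of $O$, and test \eqref{appE2u} with
\[
\varphi(t,\mathbf{x}) = \eta(t)\,\mathcal{B}\big[\phi\rho - \langle\phi\rho\rangle\big],\qquad \langle f\rangle := \tfrac{1}{|\Omega|}\textstyle\int_\Omega f\,d\mathbf{x}.
\]
Then $\text{div}\,\varphi = \eta(\phi\rho - \langle\phi\rho\rangle)$, so the pressure contribution $\int (a\rho^\gamma + \delta\rho^\beta)\,\text{div}\,\varphi$ produces exactly the target quantity $\delta\int \eta\phi\,\rho^{\beta+1}\geq \delta\int_O\rho^{\beta+1}$, up to the manifestly bounded correction $-\langle\phi\rho\rangle\int\eta(a\rho^\gamma+\delta\rho^\beta)$, which is controlled by the conserved mass and the uniform bound $\rho\in L^\infty(0,T;L^\beta(\Omega))$ from \eqref{regnergy}. (Choosing the exponent $1$ inside $\mathcal{B}$, rather than a small power $\rho^\theta$, is what pins the gain to $\rho^{\beta+1}$.)

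The core of the argument is then to show that \emph{every} other term generated by testing \eqref{appE2u} against $\varphi$ is bounded uniformly in $\varepsilon,\alpha,N$. From \eqref{regnergy} I have the uniform bounds $\sqrt{\rho}\,\mathbf{u}\in L^\infty_tL^2$, $\mathbf{u}\in L^2_tH_0^1$, $\mathbf{H}\in L^\infty_tL^2\cap L^2_tH^1$ and $\rho\in L^\infty_tL^\beta$, together with the two-dimensional embedding $H^1\hookrightarrow L^q$ for every $q<\infty$. The key $\mathcal{B}$-estimate $\|\nabla\varphi\|_{L^\infty_tL^p}\leq C\|\rho\|_{L^\infty_tL^p}$ is admissible for $p\leq\beta$. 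The convective term, rewritten as $-\int\rho\mathbf{u}\otimes\mathbf{u}:\nabla\varphi$, and the magnetic term, rewritten using $\text{div}\,\mathbf{H}=0$ as $(\nabla\times\mathbf{H})\times\mathbf{H}=\text{div}(\mathbf{H}\otimes\mathbf{H})-\tfrac12\nabla|\mathbf{H}|^2$ and hence as $-\int(\mathbf{H}\otimes\mathbf{H}-\tfrac12|\mathbf{H}|^2\,\text{Id}):\nabla\varphi$, are both controlled by Hölder (pairing $\rho|\mathbf{u}|^2$, resp. $|\mathbf{H}|^2$, in $L^1_tL^{\beta/(\beta-1)}$ against $\nabla\varphi\in L^\infty_tL^\beta$) and the Sobolev embedding; the hypothesis that $\beta$ be large is precisely what keeps these exponents admissible. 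The viscous term $-\int\mathbb{S}:\nabla\varphi$ is bounded by $\|\nabla\mathbf{u}\|_{L^2_tL^2}\|\nabla\varphi\|_{L^2_tL^2}$. The term $\int\varepsilon\nabla\mathbf{u}\cdot\nabla\rho\cdot\varphi$ carries the factor $\varepsilon^{1/2}\nabla\rho\in L^2$ from \eqref{rhotDeltarho} and so tends to zero, while the interaction term $\alpha\nabla(\tfrac{J_\mathbf{y}}{\rho}g'(1/\rho)h(|\psi\circ\mathbf{Y}|^2))$ tends to zero in the sense of distributions, as already observed at the opening of this section under \eqref{relepsalphN}.

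The step I expect to be the main obstacle is the time-derivative term $\int_0^T\int_\Omega(\rho\mathbf{u})_t\cdot\varphi$. Integrating by parts in time (the endpoint contributions vanish because $\eta$ is compactly supported) transfers the derivative onto $\varphi$, and the delicate piece is $-\int_0^T\int_\Omega\eta\,\rho\mathbf{u}\cdot\mathcal{B}\big[\phi\rho_t - \langle\phi\rho_t\rangle\big]$. To treat it I would insert the continuity equation \eqref{appE2rho} and write $\phi\rho_t = -\text{div}(\phi\rho\mathbf{u}) + \rho\mathbf{u}\cdot\nabla\phi + \varepsilon\phi\Delta\rho$: the first term is absorbed by the bound $\|\mathcal{B}[\text{div}\,\mathbf{g}]\|_{L^p}\leq C\|\mathbf{g}\|_{L^p}$ applied to $\mathbf{g}=\phi\rho\mathbf{u}$ (which is compactly supported, hence has vanishing normal trace), the second term is lower order, and the third inherits the $\varepsilon$-smallness of \eqref{rhotDeltarho}; the resulting factor then pairs with $\rho\mathbf{u}\in L^\infty(0,T;L^{2\gamma/(\gamma+1)}(\Omega))$ through Hölder and the planar Sobolev embeddings. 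Collecting all these bounds on the right-hand side, and absorbing the single genuinely $\rho^{\beta+1}$-order quantity into the left by Young's inequality (once more using that $\beta$ is large enough to keep the competing powers strictly subordinate), yields \eqref{estrhobeta+1} with a constant $c(O)$ depending only on the cutoffs, on $E(0)$, and on the structural constants, but not on $\varepsilon,\alpha,N$.
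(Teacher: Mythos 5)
Your overall strategy---testing the momentum equation with an inverse divergence of the localized density so that the pressure term produces $\delta\rho^{\beta+1}$, and controlling everything else by the energy bound \eqref{regnergy}---is exactly the paper's strategy; the paper implements it with the operator $\mathcal{A}=\nabla\Delta^{-1}$ applied to a cut-off, mollified density $\xi B_\omega$ (letting $\omega\to0$ at the end), whereas you use the Bogovskii operator $\mathcal{B}$ on $\Omega$. That substitution is legitimate, and your bounds for the convective, magnetic, viscous, interaction and $\varepsilon\nabla\mathbf{u}\cdot\nabla\rho$ terms correspond term by term to the paper's estimates of $I_1$--$I_9$.

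There is, however, one step that fails as written: the claim that the piece $\varepsilon\phi\Delta\rho$ of $\phi\rho_t$ ``inherits the $\varepsilon$-smallness of \eqref{rhotDeltarho}''. The only control \eqref{rhotDeltarho} gives on the Laplacian is $\|\varepsilon^{3}\Delta\rho\|_{L^r(\Omega\times(0,T))}\le C$, so $\|\varepsilon\Delta\rho\|_{L^r}$ may be as large as $C\varepsilon^{-2}$ (likewise the middle bound only gives $\|\rho_t\|_{L^r}=O(\varepsilon^{-2})$); feeding $\varepsilon\phi\Delta\rho$ (or $\phi\rho_t$ itself) into $\mathcal{B}$ and estimating in a Lebesgue norm therefore produces a divergent factor, not a vanishing one. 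The step must preserve the divergence structure: write $\varepsilon\phi\Delta\rho=\text{div}(\varepsilon\phi\nabla\rho)-\varepsilon\nabla\phi\cdot\nabla\rho$, apply your estimate $\|\mathcal{B}[\text{div}\,\mathbf{g}]\|_{L^2(\Omega)}\le C\|\mathbf{g}\|_{L^2(\Omega)}$ to $\mathbf{g}=\varepsilon\phi\nabla\rho$ (compactly supported, hence admissible), and invoke the \emph{first} bound of \eqref{rhotDeltarho}, $\|\varepsilon^{1/2}\nabla\rho\|_{L^2(\Omega\times(0,T))}\le C$, which makes both pieces $O(\varepsilon^{1/2})$ in $L^2(\Omega\times(0,T))$; they then pair harmlessly with $\rho\mathbf{u}\in L^2(\Omega\times(0,T))$. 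This is precisely how the paper sidesteps the issue: the corresponding term is kept as $\varepsilon\,\text{div}[\mathbbm{1}_\Omega\nabla\rho_\varepsilon]_{\mathbf{x}}^{\omega}$ inside $f_\omega=B_{\omega t}$, one observes that $f_\omega$ is uniformly bounded in $L^2(0,T;H^{-1}(\Omega))$ using only the $\varepsilon^{1/2}\nabla\rho_\varepsilon$ bound, and the boundedness of the zero-order operator $\mathcal{A}\circ\text{div}$ on $L^2$ controls $\mathcal{A}[\xi f_\omega]$; at no point is a pointwise bound on $\Delta\rho_\varepsilon$ used. A second, minor point: since your test function depends on $\varepsilon$, you cannot literally invoke convergence of the interaction term ``in the sense of distributions''; argue instead, as the paper does for $I_9$, from the quantitative bound $\alpha|J_\mathbf{y}|\le\varepsilon^2$ (a consequence of \eqref{TN}, \eqref{JY}, \eqref{relepsalphN}) and \eqref{gh}, paired against $\text{div}\,\varphi=\eta(\phi\rho-\langle\phi\rho\rangle)$, which is bounded in $L^\infty(0,T;L^\beta(\Omega))$. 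With these repairs your proof goes through.
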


Before going through the proof, let us introduce some preliminaries.

As in \cite{FeNP,Fe,HW} we consider the operator $\mathcal{A}$ by its coordinates
\begin{equation}
\mathcal{A}_j[v]:=\Delta^{-1}[\partial_{x_j}v],\hspace{10mm}j=1,2,\label{operatorA}
\end{equation}
where $\Delta^{-1}$ stands for the inverse of the Laplacian in $\mathbb{R}^2$. Equivalently, $\mathcal{A}_j$  can be defined through its Fourier symbol as
\[
\mathcal{A}_j[v]=\mathcal{F}^{-1}\left[ \frac{-i\hspace{.3mm}\xi_j}{|\xi|^2}\mathcal{F}[v] \right], \hspace{10mm}j=1,2.
\]

As shown in \cite{Fe} the operator $\mathcal{A}$ has the following properties:
\begin{align}
&||\mathcal{A}_jv||_{W^{1,s}(\Omega)}\leq c(s,\Omega)||v||_{L^s(\mathbb{R}^2)}, &1<s<\infty,\label{regA1}
\end{align}
and consequently, by Sobolev's embeddings
\begin{align}
&||\mathcal{A}_jv||_{L^q(\Omega)}\leq c(s,\Omega)||v||_{L^s(\mathbb{R}^2)}, &&q\text{ finite, provided }\frac{1}{q}\geq \frac{1}{s}-\frac{1}{2},\label{regA2}\\
&||\mathcal{A}_jv||_{L^\infty(\Omega)}\leq c(s,\Omega)||v||_{L^s(\Omega)}, &&\text{if }s>2.\label{regA3}
\end{align}

Let us also introduce the following standard smoothing operator
\begin{equation}
[v]_\mathbf{x}^\omega(\mathbf{z}):=(\vartheta_\omega*v)(\mathbf{z})=\int_{\mathbb{R}^2}\vartheta_\omega(\xi-\mathbf{z})v(\xi)d\xi,\label{mollifier}
\end{equation}
where, for each $\omega>0$,
\[
\vartheta_\omega(\mathbf{z}):=\frac{1}{\omega^2}\vartheta\left( \frac{|\mathbf{z}|}{\omega} \right),\hspace{10mm}\mathbf{z}\in\mathbb{R}^2,
\]
and $\vartheta\in C_0^\infty((-1,1))$ with
\[
\vartheta(-\tau)=\vartheta(\tau),\hspace{10mm}\int_{\mathbb{R}^2}\vartheta(|\mathbf{z}|)d\mathbf{z} = 1,\hspace{10mm}\vartheta \text{ nonincreasing on } [0,\infty).
\]

Let us also observe that from \eqref{regnergy} we have, in particular, that
\begin{equation}
\rho_\varepsilon \text{ is bounded in }L^\infty(0,T;L^\beta(\Omega)),\label{rhobetaeps},
\end{equation}
\begin{equation}
\mathbf{u}_\varepsilon \text{ is bounded in }L^2(0,T;H_0^1(\Omega)).\label{uH01eps},
\end{equation} 
\begin{equation}
\mathbf{H}_\varepsilon \text{ is bounded in }L^\infty(0,T;L^2(\Omega))\cap L^2(0,T;H^1(\Omega)).\label{HH01eps}
\end{equation}
\begin{equation}
\psi_\varepsilon \text{ is bounded in }L^\infty(0,T;L^4(\Omega)\cap H_0^1(\Omega)).\label{psiH01eps}
\end{equation}

\begin{proof}[Proof of Lemma \ref{rhobeta+1}]
For $\omega>0$, set
\[
B_\omega=[\rho_\varepsilon]_{\mathbf{x}}^\omega.
\]
Let us recall that $\rho_\varepsilon$ and $\mathbf{u}_\varepsilon$ satisfy \eqref{appE2rho} a.a. on $\Omega\times(0,T)$, along with the boundary condition $(\nabla\rho_\varepsilon\cdot\mathbf{n})|_{\partial \Omega} =0$. Then, extending $\rho_\varepsilon$ and $\mathbf{u}_\varepsilon$ to be zero outside of $\Omega$ we have that
\begin{equation}
\rho_{\varepsilon t}+\text{div}(\rho_\varepsilon \mathbf{u}_\varepsilon)=\varepsilon\text{div}(\mathbbm{1}_{\Omega}\nabla\rho_\varepsilon)\label{rhoR2}
\end{equation}
in the sense of distributions in $\mathbb{R}^2\times(0,T)$, where $\mathbbm{1}_\Omega$ is the characteristic function of $\Omega$. 

Applying the smoothing operator $[\cdot]_\mathbf{x}^\omega$ to equation \eqref{rhoR2} we have
\begin{equation}
B_{\omega t}=f_\omega,\label{Bomegat}
\end{equation}
with
\[
f_\omega =  - \text{div}([\rho_\varepsilon \mathbf{u}_\varepsilon]_\mathbf{x}^\omega) + \varepsilon\text{div}[\mathbbm{1}_\Omega\nabla\rho_\varepsilon]_\mathbf{x}^\omega.
\]

Note that $h_\omega$ is uniformly bounded in $L^2(0,T;H^{-1}(\Omega))$. 

As in \cite{Fe} we choose the test function\footnote{Let us recall that our two dimensional model can be regarded as the three dimensional one under the assumption that the involved functions are independent of the third variable. In particular, the velocity field takes values in $\mathbb{R}^3$. Accordingly, in order to use $\varphi$ as a test function we define its third component as being identically equal to zero.}
\[
\varphi(\mathbf{x},t)=\zeta(t)\eta(x)\mathcal{A}[\xi(\cdot)B_\omega(\cdot,t)](\mathbf{x},t),
\]
where $\eta,\xi\in C_0^\infty(\Omega)$ and $\zeta\in C_0^\infty((0,T))$,  and use it in the momentum equation \eqref{appE2u} to obtain
\begin{equation}
\int_0^T\int_\Omega \zeta\eta\xi(a\rho_\varepsilon^\gamma + \delta\rho_\varepsilon^\beta)B_\omega d\mathbf{x}ds =\int_0^T\int_\Omega \zeta\eta \mathbb{S}_\varepsilon:(\nabla\Delta^{-1}\nabla)[\xi B_\omega]d\mathbf{x}ds +  \sum_{j=1}^{9} I_j ,\label{sumIj}
\end{equation}
where, 
\[
\mathbb{S}_\varepsilon = \lambda(\text{div}\mathbf{u}_\varepsilon)\text{Id} + \mu(\nabla\mathbf{u}_\varepsilon+(\nabla \mathbf{u}_\varepsilon)^\top)
\]
is the viscous stress tensor, and
\begin{align*}
&I_1=\int_0^T\int_\Omega \zeta\mathbb{S}_\varepsilon\nabla \eta\cdot \mathcal{A}[\xi B_\omega]d\mathbf{x}ds,\\
&I_2=-\int_0^T\int_\Omega \zeta(a\rho_\varepsilon^\gamma+\delta\rho_\varepsilon^\beta)\nabla \eta\cdot\mathcal{A}[\xi B_\omega]d\mathbf{x}ds,\\
&I_3=-\int_0^T\int_\Omega\zeta(\rho_\varepsilon\mathbf{u}_\varepsilon\otimes \mathbf{u}_\varepsilon)\nabla\eta\cdot\mathcal{A}[\xi B_\omega]d\mathbf{x}ds\\
&I_4=- \int_0^T\int_\Omega \zeta \mathbf{u}_\varepsilon\cdot (\nabla\Delta^{-1}\nabla)[\xi B_\omega]\eta\rho_\varepsilon\mathbf{u}_\varepsilon d\mathbf{x}ds\\
&I_5=-\int_0^T\int_\Omega\zeta\eta (\nabla\times\mathbf{H}_\varepsilon)\times\mathbf{H}_\varepsilon\cdot\mathcal{A}[\xi B_\omega]d\mathbf{x}ds\\
&I_6=-\int_0^T\int_\Omega \zeta_t\hspace{.5mm}\eta\rho_\varepsilon\mathbf{u}_\varepsilon\cdot\mathcal{A}[\xi B_\omega]d\mathbf{x}ds\\
&I_7=-\int_0^T\int_\Omega\zeta\eta \rho_\varepsilon\mathbf{u}_\varepsilon\cdot\mathcal{A}[\xi f_\omega]d\mathbf{x}ds\\
&I_8=\varepsilon\int_0^T\int_\Omega\zeta\eta\nabla\mathbf{u}_\varepsilon\nabla\rho_\varepsilon\cdot\mathcal{A}[\xi B_\omega]d\mathbf{x}ds\\
&I_9=-\int_0^T\int_\Omega\zeta\alpha\frac{J_\mathbf{y}}{\rho_\varepsilon}g'(1/\rho_\varepsilon)h(|\psi_\varepsilon|^2)\big( \eta\xi B_\omega + \nabla\eta\cdot\mathcal{A}[\xi B_\omega] \big)d\mathbf{x}ds
\end{align*}

Note that by \eqref{regA3}, we have that
\begin{equation}
\mathcal{A}[\xi B_\omega] \text{ are bounded in }L^\infty(\Omega\times(0,T)),\label{AxiBinfty}
\end{equation}
provided that $\beta>2$. This together with \eqref{rhobetaeps} and \eqref{uH01eps} implies that the integrals $I_1$, $I_2$, $I_3$ and $I_7$ are bounded by a constant independent of $\varepsilon$ and $\omega$. Next, by \eqref{regA1} combined with \eqref{rhobetaeps} and \eqref{uH01eps} we have that $I_4$ is also bounded. Now, by the fact that $T\leq T_N$ combined with \eqref{TN}, \eqref{JY}, \eqref{regnergy} and \eqref{relepsalphN} we see that
\[
\alpha |J_\mathbf{y}|\leq \varepsilon^2,
\]
and thus, by \eqref{gh}, $I_9\to 0$ as $\varepsilon \to 0$. In particular, $I_9$ is also bounded by a constant independent of $\varepsilon$ and $\omega$. 

Regarding $I_7$, we see that $\rho_\varepsilon$, being a solution of equation \eqref{rhotermsofu}, satisfies the identity
\[
||\rho_\varepsilon(t)||_{L^2(\Omega)}^2+2\varepsilon \int_0^t||\nabla \rho_\varepsilon||_{L^2(\Omega)}^2ds = -\int_0^t\int_\Omega \rho_\varepsilon^2\text{div}\mathbf{u}_\varepsilon d\mathbf{x}ds + ||\rho_0||_{L^2(\Omega)}^2,
\]
and therefore we see that
\[
\varepsilon^{1/2}\nabla\rho_\varepsilon \text{ are uniformly bounded in }L^2(0,T;L^2(\Omega)).
\]

In particular, by \eqref{regA1}
\[
\mathcal{A}[\xi f_\varepsilon] \text{ are uniformly bounded in }L^2(\Omega\times(0,T)),
\]
Thus, we conclude that $I_7$ is bounded by a constant independent of $\varepsilon$ and $\omega$. By the same token we see that $I_8$ is uniformly bounded as well. In fact, we have that $I_8\to 0$ as $\varepsilon \to 0$.

Next, we see that \eqref{HH01eps} and \eqref{AxiBinfty} imply that $I_5$ is also bounded by a constant independent of $\varepsilon$ and $\omega$.

Finally, \eqref{regA1} and \eqref{uH01eps} also yield a uniform bound for the integral
\[
\int_0^T\int_\Omega \zeta\eta \mathbb{S}_\varepsilon:(\nabla\Delta^{-1}\nabla)[\xi B_\omega]d\mathbf{x}ds.
\]

Gathering all this information in \eqref{sumIj} and letting $\omega\to 0$ we arrive at \eqref{estrhobeta+1}. Of course, the bounds obtained for the integrals above depend on $\zeta$, $\eta$ and $\xi$, which is why the result is local.
\end{proof}

%----------------------------------------------------------------------------------------------------

\subsection{The effective viscous flux}

This section concerns the proof of the weak continuity of the effective viscous flux. However, before we get to it we have to make a few observations.

By \eqref{rhobetaeps}, \eqref{uH01eps}, \eqref{HH01eps} and \eqref{psiH01eps} we can assume \eqref{rhoepstorho}--\eqref{psiepstopsi}, where the strong convergence in \eqref{HepstoH} and in \eqref{psiepstopsi} is due to Aubin-Lions Lemma (Lemma \ref{AubinLions}).

Then, by the same arguments used to obtain \eqref{HntoH}, \eqref{rhountorhou} and \eqref{rhouxuntorhouxu} we see that
\begin{align}
&(\nabla\times \mathbf{H}_\varepsilon)\times \mathbf{H}_\varepsilon\to (\nabla\times \mathbf{H})\times\mathbf{H},\text{ in the sense of distributions},\label{derivHepstoH}\\
&\nabla\times (\mathbf{u}_\varepsilon\times\mathbf{H}_\varepsilon)\to\nabla\times (\mathbf{u}\times\mathbf{H})\text{ in the sense of distributions},\\
&\rho_\varepsilon \mathbf{u}_\varepsilon \to \rho\mathbf{u} \text{ in }C([0,T];L_{weak}^{2\beta/(\beta+1)}(\Omega)),\label{rhouepstorhou}\\
&\rho_\varepsilon \mathbf{u}_\varepsilon\otimes \mathbf{u}_\varepsilon\to\rho\mathbf{u}\otimes\mathbf{u} \text{ weakly in }L^2(0,T;L^{c_2}(\Omega)),\label{rhouuepstorhouu}
\end{align}
where, $c_2=2\gamma/(1+\gamma)>1$. 

As pointed out before we have that
\begin{equation}
\varepsilon \nabla\mathbf{u}_\varepsilon\cdot\nabla \rho_\varepsilon \to 0
\end{equation}
and
\begin{equation}
\alpha\nabla\left( \frac{J_\mathbf{y}}{\rho_\varepsilon}g'(1/\rho_\varepsilon)h(|\psi_\varepsilon|^2) \right) \to 0\label{alphacoupl}
\end{equation}
in the sense of distributions.  

Moreover, by \eqref{estrhobeta+1} we can assume that
\begin{equation}
a\rho^\gamma + \delta\rho^\beta \to \overline{p} \text{ weakly in }L^{(\beta+1)/\beta}(\Omega\times(0,T)).\label{pepstop}
\end{equation}

All of this information implies that the limit functions satisfy the equations
\begin{flalign}
&\rho_t + \text{div}(\rho\mathbf{u}) = 0&\label{rhoeps0}\\
&(\rho \mathbf{u})_t + \text{div}(\rho\mathbf{u}\otimes\mathbf{u}) + \nabla \overline{p} = \text{div}\mathbb{S} + \text{curl}\left(\mathbf{H} \right)\times\mathbf{H}.&\label{ueps0}
\end{flalign}
in the sense of distributions.

With this, we can state the result on the weak continuity of the effective viscous flux, originally discovered by P.-L. Lions (see \cite{Li}), as (cf. \cite{Fe,FeNP,HW})
\begin{lemma}\label{lemmaeffvisc}
Let $(\rho_\varepsilon,\mathbf{u}_\varepsilon,\mathbf{H}_\varepsilon,\psi_\varepsilon)$ be the solution of the regularized system provided by Theorem \ref{regsys}. Then,
\begin{align}
\lim_{\varepsilon\to 0}&\int_0^T\int_\Omega \zeta\eta (a\rho_\varepsilon^\gamma+\delta\rho_\varepsilon^\beta-(\lambda+2\mu)\text{div}\mathbf{u}_\varepsilon)\rho_\varepsilon d\mathbf{x}ds\nonumber\\
&=\int_0^T\int_\Omega \zeta\eta (a\overline{\rho^\gamma}+\delta\overline{\rho^\beta}-(\lambda+2\mu)\text{div}\mathbf{u})\rho d\mathbf{x}ds,\label{effviscflux}
\end{align}
for any $\zeta\in C_0^\infty((0,T))$, and $\eta\in C_0^\infty(\Omega)$.
\end{lemma}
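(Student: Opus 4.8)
The plan is to reproduce the classical Lions--Feireisl argument (see \cite{Li,Fe,FeNP,HW}), testing both momentum equations against a quantity built from the inverse-divergence operator $\mathcal{A}=\nabla\Delta^{-1}$ of \eqref{operatorA}, and comparing the two resulting identities. Concretely, I would insert
\[
\varphi_\varepsilon(\mathbf{x},t)=\zeta(t)\eta(\mathbf{x})\,\mathcal{A}[\rho_\varepsilon](\mathbf{x},t)
\]
(with $\rho_\varepsilon$ extended by zero outside $\Omega$ and, to be fully rigorous, regularized by the mollification $[\cdot]_\mathbf{x}^\omega$ of \eqref{mollifier} and a cutoff, both removed at the end exactly as in the proof of Lemma \ref{rhobeta+1}) into the approximate equation \eqref{appE2u}, and $\varphi=\zeta\eta\,\mathcal{A}[\rho]$ into the limit equation \eqref{ueps0}. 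Since $\operatorname{div}\mathcal{A}[w]=w$ and $\nabla\mathcal{A}[w]=(\nabla\Delta^{-1}\nabla)[w]=:\mathcal{R}[w]$ is the symmetric double Riesz transform with $\operatorname{tr}\mathcal{R}[w]=w$, the pressure term reproduces exactly the principal quantity $\int\!\!\int\zeta\eta(a\rho_\varepsilon^\gamma+\delta\rho_\varepsilon^\beta)\rho_\varepsilon$ appearing on the left of \eqref{effviscflux}, while the viscous term $\int\!\!\int\zeta\eta\,\mathbb{S}_\varepsilon:\mathcal{R}[\rho_\varepsilon]$ splits into the principal contribution $(\lambda+2\mu)\int\!\!\int\zeta\eta\,\rho_\varepsilon\operatorname{div}\mathbf{u}_\varepsilon$ plus off-diagonal Riesz contributions.

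Next I would dispose of the routine terms, which are of the same type as $I_1$--$I_9$ in \eqref{sumIj}. The terms carrying a factor $\nabla\eta\cdot\mathcal{A}[\rho_\varepsilon]$ are of lower order: by \eqref{regA2}--\eqref{regA3} the functions $\mathcal{A}[\rho_\varepsilon]$ are bounded in $L^\infty(\Omega\times(0,T))$ (here $\beta>2$) and converge to $\mathcal{A}[\rho]$, so these pass to the limit using \eqref{rhoepstorho}, \eqref{uepstou}, \eqref{rhouepstorhou} and \eqref{rhouuepstorhouu}. The magnetic term is handled by \eqref{derivHepstoH} together with the $L^\infty$ bound on $\mathcal{A}[\rho_\varepsilon]$; the artificial-viscosity term and the SW--LW coupling term vanish in the limit exactly as $I_8$ and $I_9$ did in Lemma \ref{rhobeta+1}, using that $\varepsilon^{1/2}\nabla\rho_\varepsilon$ is bounded in $L^2$ and using \eqref{alphacoupl}. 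The genuinely structured contribution comes from the time-derivative term $\langle(\rho_\varepsilon\mathbf{u}_\varepsilon)_t,\zeta\eta\mathcal{A}[\rho_\varepsilon]\rangle$: integrating by parts in $t$ and invoking the continuity equation \eqref{appE2rho} to rewrite $\partial_t\mathcal{A}[\rho_\varepsilon]=-\mathcal{A}[\operatorname{div}(\rho_\varepsilon\mathbf{u}_\varepsilon)]+\varepsilon\mathcal{A}[\Delta\rho_\varepsilon]$, this combines with the convective term into a single bilinear expression, and it is in this expression that the off-diagonal Riesz contributions of the viscous term must cancel---the cancellation being precisely what singles out the coefficient $(\lambda+2\mu)$.

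The main obstacle, as always, is concentrated in passing to the limit in this remaining bilinear expression, namely in
\[
\int_0^T\!\!\int_\Omega \zeta\eta\,\mathbf{u}_\varepsilon\cdot\Big(\rho_\varepsilon\,\mathcal{R}[\rho_\varepsilon\mathbf{u}_\varepsilon]-(\rho_\varepsilon\mathbf{u}_\varepsilon)\,\mathcal{R}[\rho_\varepsilon]\Big)\,d\mathbf{x}\,ds
\]
and its counterpart with $(\rho,\mathbf{u})$ in place of $(\rho_\varepsilon,\mathbf{u}_\varepsilon)$. Since $\mathbf{u}_\varepsilon$ converges only weakly in $L^2(0,T;H_0^1(\Omega))$ (no strong compactness of the velocity is available), one cannot merely multiply weak limits. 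The decisive point is the commutator lemma of Lions and Feireisl: under the uniform bounds \eqref{rhobetaeps}--\eqref{uH01eps} together with the strong convergence \eqref{rhouepstorhou} of $\rho_\varepsilon\mathbf{u}_\varepsilon$ in $C([0,T];L^{2\beta/(\beta+1)}_{weak}(\Omega))$, the commutator $\rho_\varepsilon\,\mathcal{R}[\rho_\varepsilon\mathbf{u}_\varepsilon]-(\rho_\varepsilon\mathbf{u}_\varepsilon)\,\mathcal{R}[\rho_\varepsilon]$ is \emph{compact}, i.e.\ it converges strongly in some $L^p(0,T;W^{-s,q}(\Omega))$ with $s>0$ small; this is the manifestation of the div-curl structure ($\mathcal{A}[\rho_\varepsilon]$ is a gradient, hence curl-free, while the divergence of $\rho_\varepsilon\mathbf{u}_\varepsilon$ is controlled through the continuity equation). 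The strong convergence of the commutator then pairs with the weakly convergent $\mathbf{u}_\varepsilon$ to yield the correct limit. I would establish this by adapting verbatim the corresponding argument in \cite{HW} (cf.\ \cite{Fe,FeNP}); the magnetic field, the artificial viscosity and the vanishing coupling do not interfere, as they enter only through the already-controlled terms treated above.

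Collecting the principal terms from the two identities and letting $\varepsilon\to0$ (and finally $\omega\to0$ and removing the cutoff, as in Lemma \ref{rhobeta+1}) leaves precisely \eqref{effviscflux}, the weak-limit pressure $\overline{p}=a\overline{\rho^\gamma}+\delta\overline{\rho^\beta}$ being available through \eqref{pepstop} and the bound \eqref{estrhobeta+1}.
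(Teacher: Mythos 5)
Your proposal is correct and follows essentially the same route as the paper: testing the approximate and limit momentum equations with $\zeta\eta\,\mathcal{A}$ applied to the (mollified, cut-off) density, extracting the pressure and viscous-flux principal terms, disposing of the lower-order, magnetic, artificial-viscosity and coupling terms exactly as in Lemma \ref{rhobeta+1}, and resolving the critical bilinear term via the Lions--Feireisl div-curl commutator lemma (the paper's Lemma \ref{coro6.1Fe}), whose compactness in a negative Sobolev space pairs with the weak convergence of $\mathbf{u}_\varepsilon$. The only cosmetic differences are notational (your $\mathcal{R}$ versus the paper's $\nabla\Delta^{-1}\nabla$, and strong convergence stated in $L^p(0,T;W^{-s,q})$ versus the paper's $L^2(0,T;H^{-1}(\Omega))$), and the paper makes explicit, via the computation ending in \eqref{lastvisceps}--\eqref{lastvisc}, the cancellation of off-diagonal Riesz contributions that you describe qualitatively.
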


\begin{proof}
First, noting that
\[
\xi\text{div}([\rho_\varepsilon\mathbf{u}_\varepsilon]_\mathbf{x}^\omega)=\text{div}(\xi[\rho_\varepsilon\mathbf{u}_\varepsilon]_\mathbf{x}^\omega) - \nabla\xi\cdot [\rho_\varepsilon\mathbf{u}_\varepsilon]_\mathbf{x}^\omega,
\]
we see that $I_7$ in \eqref{sumIj} may be rewritten as
\[
I_7=I_7^1+I_7^2+I_7^3,
\]
where
\begin{align*}
&I_7^1=\int_0^T\int_\Omega \zeta \xi [\rho_\varepsilon\mathbf{u}_\varepsilon]_\mathbf{x}^\omega (\nabla\Delta^{-1}\text{div})[\eta\rho_\varepsilon\mathbf{u}_\varepsilon]d\mathbf{x}ds\\
&I_7^2=-\int_0^T\int_\Omega \zeta\eta \rho_\varepsilon\mathbf{u}_\varepsilon\mathcal{A}\big[\nabla\xi\cdot[\rho_\varepsilon\mathbf{u}_\varepsilon]_\mathbf{x}^\omega\big] d\mathbf{x}ds\\
&I_7^3=-\varepsilon\int_0^T\int_\Omega \zeta \eta\rho_\varepsilon\mathbf{u}_\varepsilon\mathcal{A}[\xi\text{div}(\mathbbm{1}_\Omega \nabla\rho_\varepsilon)]d\mathbf{x}ds.
\end{align*}

Therefore, passing to the limit as $\omega\to 0$ in \eqref{sumIj} we obtain
\begin{align}
\int_0^T\int_\Omega \zeta\eta\big(\xi(a\rho_\varepsilon^\gamma + \delta\rho_\varepsilon^\beta)\rho_\varepsilon - \mathbb{S}_\varepsilon:(\nabla\Delta^{-1}\nabla)[\xi \rho_\varepsilon]\big)d\mathbf{x}ds =  \sum_{j=1}^{9} J_j^\varepsilon \nonumber\\
+\int_0^T\int_\Omega \zeta\mathbf{u}_\varepsilon\big( \xi\rho_\varepsilon(\nabla\Delta^{-1}\text{div})[\eta\rho_\varepsilon\mathbf{u}_\varepsilon]-(\nabla\Delta^{-1}\nabla)[\xi\rho_\varepsilon]\eta\rho_\varepsilon\mathbf{u}_\varepsilon \big)d\mathbf{x}ds,\label{sumJeps}
\end{align}
where, 
\begin{align*}
&J_1^\varepsilon=\int_0^T\int_\Omega \zeta\mathbb{S}_\varepsilon\nabla \eta\cdot \mathcal{A}[\xi \rho_\varepsilon]d\mathbf{x}ds,\\
&J_2^\varepsilon=-\int_0^T\int_\Omega \zeta(a\rho_\varepsilon^\gamma+\delta\rho_\varepsilon^\beta)\nabla \eta\cdot\mathcal{A}[\xi \rho_\varepsilon]d\mathbf{x}ds,\\
&J_3^\varepsilon=-\int_0^T\int_\Omega\zeta(\rho_\varepsilon\mathbf{u}_\varepsilon\otimes \mathbf{u}_\varepsilon)\nabla\eta\cdot\mathcal{A}[\xi \rho_\varepsilon]d\mathbf{x}ds\\
&J_4^\varepsilon=-\int_0^T\int_\Omega\zeta\eta (\nabla\times\mathbf{H}_\varepsilon)\times\mathbf{H}_\varepsilon\cdot\mathcal{A}[\xi \rho_\varepsilon]d\mathbf{x}ds\\
&J_5^\varepsilon=-\int_0^T\int_\Omega \zeta_t\hspace{.5mm}\eta\rho_\varepsilon\mathbf{u}_\varepsilon\cdot\mathcal{A}[\xi \rho_\varepsilon]d\mathbf{x}ds\\
&J_6^\varepsilon=-\int_0^T\int_\Omega \zeta\eta \rho_\varepsilon\mathbf{u}_\varepsilon\mathcal{A}[\nabla\xi\cdot\rho_\varepsilon\mathbf{u}_\varepsilon] d\mathbf{x}ds\\
&J_7^\varepsilon=-\varepsilon\int_0^T\int_\Omega \zeta \eta\rho_\varepsilon\mathbf{u}_\varepsilon\mathcal{A}[\xi\text{div}(\mathbbm{1}_\Omega \nabla\rho_\varepsilon)]d\mathbf{x}ds\\
&J_8^\varepsilon=\varepsilon\int_0^T\int_\Omega\zeta\eta\nabla\mathbf{u}_\varepsilon\nabla\rho_\varepsilon\cdot\mathcal{A}[\xi \rho_\varepsilon]d\mathbf{x}ds\\
&J_9^\varepsilon=-\int_0^T\int_\Omega\zeta\alpha\frac{J_\mathbf{y}}{\rho_\varepsilon}g'(1/\rho_\varepsilon)h(|\psi_\varepsilon|^2)\big( \eta\xi \rho_\varepsilon + \nabla\eta\cdot\mathcal{A}[\xi \rho_\varepsilon] \big)d\mathbf{x}ds
\end{align*}

Now, using equations \eqref{rhoeps0} and \eqref{ueps0}, a similar procedure yields
\begin{align}
\int_0^T\int_\Omega \zeta\eta\big(\xi(a\rho^\gamma + \delta\rho^\beta)\rho - \mathbb{S}:(\nabla\Delta^{-1}\nabla)[\xi \rho]\big)d\mathbf{x}ds =  \sum_{j=1}^{6} J_j \nonumber\\
+\int_0^T\int_\Omega \zeta\mathbf{u}\big( \xi\rho(\nabla\Delta^{-1}\nabla)[\eta\rho\mathbf{u}]-(\nabla\Delta^{-1}\text{div})[\xi\rho]\eta\rho\mathbf{u} \big)d\mathbf{x}ds,\label{sumJ}
\end{align}
where, 
\begin{align*}
&J_1=\int_0^T\int_\Omega \zeta\mathbb{S}\nabla \eta\cdot \mathcal{A}[\xi \rho]d\mathbf{x}ds,\\
&J_2=-\int_0^T\int_\Omega \zeta\overline{p}\nabla \eta\cdot\mathcal{A}[\xi \rho]d\mathbf{x}ds,\\
&J_3=-\int_0^T\int_\Omega\zeta(\rho\mathbf{u}\otimes \mathbf{u})\nabla\eta\cdot\mathcal{A}[\xi \rho]d\mathbf{x}ds\\
&J_4=-\int_0^T\int_\Omega\zeta\eta (\nabla\times\mathbf{H})\times\mathbf{H}\cdot\mathcal{A}[\xi \rho]d\mathbf{x}ds\\
&J_5=-\int_0^T\int_\Omega \zeta_t\hspace{.5mm}\eta\rho\mathbf{u}\cdot\mathcal{A}[\xi \rho]d\mathbf{x}ds\\
&J_6=-\int_0^T\int_\Omega \zeta\eta \rho\mathbf{u}\mathcal{A}[\nabla\xi\cdot\rho\mathbf{u}] d\mathbf{x}ds
\end{align*}

Following \cite{Fe,HW}, we now proceed to show that all the integrals in the right hand side of \eqref{sumJeps} converge to their counterparts in \eqref{sumJ}.

As $\rho_\varepsilon$ satisfies equation \eqref{appE2rho}, Lemma \ref{coro2.1Fe} yields
\begin{equation}
\rho_\varepsilon\to\rho \text{ in }C([0,T];L_{weak}^\beta(\Omega)),\label{rhoepstohocont}
\end{equation}
and consequently, by \eqref{regA1} and the compactness of the embedding $W^{1,\beta}(\Omega)\to C(\overline{\Omega})$ (recall that $\beta>2$) we have that
\[
\mathcal{A}[\xi\rho_\varepsilon]\to\mathcal{\xi\rho} \text{ in }C(\Omega\times(0,T)),
\]
Thus, in light of \eqref{uepstou}, \eqref{pepstop}, \eqref{derivHepstoH}, \eqref{rhouepstorhou} and \eqref{rhouuepstorhouu}, we have that 
\[
J_k^\varepsilon\to J_k, \text{ for }k=1,2,3,4,5.
\]

Similarly, by \eqref{rhoepstorho} and \eqref{uepstou} we have, in particular, that
\begin{equation}
\rho_\varepsilon \mathbf{u}_\varepsilon \text{ is bounded in }L^2(\Omega\times(0,T)),\label{rhouL2}
\end{equation}
and this together with \eqref{regA1} and \eqref{rhoepstohocont} implies that
\[
\mathcal{\nabla\xi\cdot\rho_\varepsilon\mathbf{u}_\varepsilon}\to\mathcal{\nabla\xi\cdot\rho\mathbf{u}} \text{ weakly in }L^2(0,T;H^1(\Omega)).
\]

Consequently, taking \eqref{rhouepstorhou} into account we have that
\[
J_6^\varepsilon\to J_6.
\]

As was already mentioned we have that
\[
J_k^\varepsilon\to 0, \text{ for }j=7,8,9.
\]

In order to deal with the last term on the right hand side of \eqref{sumJeps} we state the following result (see \cite[Corollary 6.1]{Fe}, also \cite[Lemma 3.4]{FeNP}).
\begin{lemma}\label{coro6.1Fe}
Let $O\subseteq \mathbb{R}^N$ be an arbitrary domain.
\begin{enumerate}
\item[(i)] Let 
\[
\mathbf{v}_n\to\mathbf{v} \text{ weaky in }L^p(O;\mathbb{R}^N),\hspace{5mm}\mathbf{w}_n\to\mathbf{w} \text{ weaky in }L^q(O;\mathbb{R}^N),
\]
with
\[
1<p,\hspace{5mm}, q<\infty,\hspace{5mm}\frac{1}{p}+\frac{1}{q}\leq 1.
\]
Then
\begin{align*}
&\mathbf{v}_n\cdot(\nabla\Delta^{-1}\text{div})[\mathbf{w}_n]-\mathbf{w}_n\cdot(\nabla\Delta^{-1}\text{div})[\mathbf{v}_n]\\
&\hspace{20mm}\to \mathbf{v}\cdot(\nabla\Delta^{-1}\text{div})[\mathbf{w}]-\mathbf{w}\cdot(\nabla\Delta^{-1}\text{div})[\mathbf{v}]
\end{align*}
in the sense of distributions.
\item[(ii)] Under the same hypotheses, if
\[
B_n\to B \text{ weakly in }L^p(O),\hspace{5mm}\mathbf{v}_n\to\mathbf{v}\text{ weakly in }L^q(O;\mathbb{R}^n),
\]
then
\[
(\nabla\Delta^{-1}\nabla)[B_n]\mathbf{v}_n - (\nabla\Delta^{-1}\text{div})[\mathbf{v}_n]B_n\to(\nabla\Delta^{-1}\nabla)[B]\mathbf{v} - (\nabla\Delta^{-1}\text{div})[\mathbf{v}]B
\]
\end{enumerate}
\end{lemma}
The proof of this result consists in applying a particular case of the celebrated Div-Curl Lemma (\cite{Mu,T,T'}). We refer to \cite{Fe} for the proof.

Now, by \eqref{rhoepstorho} and \eqref{rhouepstorhou}, a direct application of the above Lemma implies
\begin{align*}
&(\nabla\Delta^{-1}\nabla)[\xi\rho_\varepsilon(t)]\eta\rho_\varepsilon\mathbf{u}_\varepsilon(t)-\xi\rho_\varepsilon(t)(\nabla\Delta^{-1}\text{div})[\eta\rho_\varepsilon\mathbf{u}_\varepsilon(t)]\\
&\hspace{30mm}\to(\nabla\Delta^{-1}\nabla)[\xi\rho(t)]\eta\rho\mathbf{u}(t)-\xi\rho(t)(\nabla\Delta^{-1}\text{div})[\eta\rho\mathbf{u}(t)],
\end{align*}
weakly in $L^{2\beta/(\beta+3)}(\Omega)$ , for each fixed $t$.

As we know $L^q(\Omega)$ is compactly embedded in $H^{-1}(\Omega)$ for each $q>1$ (remember that our spatial domain is a bounded open subset of $\mathbb{R}^2$). In particular,
\begin{align*}
&(\nabla\Delta^{-1}\nabla)[\xi\rho_\varepsilon]\eta\rho_\varepsilon\mathbf{u}_\varepsilon-\xi\rho_\varepsilon(\nabla\Delta^{-1}\text{div})[\eta\rho_\varepsilon\mathbf{u}_\varepsilon]\\
&\hspace{30mm}\to(\nabla\Delta^{-1}\nabla)[\xi\rho]\eta\rho\mathbf{u}-\xi\rho(\nabla\Delta^{-1}\text{div})[\eta\rho\mathbf{u}],
\end{align*}
strongly in $L^2(0,T;H^{-1}(\Omega))$. As a consequence, keeping in mind \eqref{uepstou}, we see that
\begin{align*}
&\int_0^T\int_\Omega \zeta\mathbf{u}_\varepsilon\big( \xi\rho_\varepsilon(\nabla\Delta^{-1}\text{div})[\eta\rho_\varepsilon\mathbf{u}_\varepsilon]-(\nabla\Delta^{-1}\nabla)[\xi\rho_\varepsilon]\eta\rho_\varepsilon\mathbf{u}_\varepsilon \big)d\mathbf{x}ds\\
&\hspace{5mm}\to \int_0^T\int_\Omega \zeta\mathbf{u}\big( \xi\rho(\nabla\Delta^{-1}\nabla)[\eta\rho\mathbf{u}]-(\nabla\Delta^{-1}\text{div})[\xi\rho]\eta\rho\mathbf{u} \big)d\mathbf{x}ds.
\end{align*}

All of this information put together with \eqref{sumJeps} and \eqref{sumJ} yields
\begin{align}
&\lim_{\varepsilon\to 0}\int_0^T\int_\Omega \zeta\eta\big(\xi(a\rho_\varepsilon^\gamma + \delta\rho_\varepsilon^\beta)\rho_\varepsilon - \mathbb{S}_\varepsilon:(\nabla\Delta^{-1}\nabla)[\xi \rho_\varepsilon]\big)d\mathbf{x}ds\nonumber\\
&\hspace{15mm}=\int_0^T\int_\Omega \zeta\eta\big(\xi(a\rho^\gamma + \delta\rho^\beta)\rho - \mathbb{S}:(\nabla\Delta^{-1}\nabla)[\xi \rho]\big)d\mathbf{x}ds,\label{lastvisclim}
\end{align}
for any $\zeta\in C_0^\infty((0,T))$ and $\eta,\xi\in C_0^\infty(\Omega)$.

In order to conclude, as in \cite{Fe}, we compute
\begin{align}
&\int_0^T\int_\Omega\zeta \eta \mathbb{S}_\varepsilon:(\nabla\Delta^{-1}\nabla)[\xi \rho_\varepsilon]d\mathbf{x}ds\nonumber\\
&\hspace{15mm}=\int_0^T\int_\Omega\zeta \xi (\nabla\Delta^{-1}\nabla):(\eta\mathbb{S}_\varepsilon) \rho_\varepsilon d\mathbf{x}ds\nonumber\\
&\hspace{15mm}=\int_0^T\int_\Omega\zeta \xi (2\mu+\lambda)\text{div}(\eta\mathbf{u}_\varepsilon)\rho_\varepsilon d\mathbf{x}ds\nonumber\\
&\hspace{25mm}-\int_0^T\int_\Omega \zeta\xi \rho_\varepsilon[2\mu(\nabla\Delta^{-2}\nabla):(\mathbf{u}_\varepsilon\otimes\nabla\eta)+\lambda\mathbf{u}_\varepsilon\cdot\nabla\eta]d\mathbf{x}ds\nonumber\\
&\hspace{15mm}=\int_0^T\int_\Omega\zeta \xi \eta(2\mu+\lambda)\text{div}\mathbf{u}_\varepsilon\hspace{1mm}\rho_\varepsilon d\mathbf{x}ds\nonumber\\
&\hspace{25mm}-\int_0^T\int_\Omega 2\mu \zeta\xi \rho_\varepsilon[(\nabla\Delta^{-2}\nabla):(\mathbf{u}_\varepsilon\otimes\nabla\eta)-\mathbf{u}_\varepsilon\cdot\nabla\eta]d\mathbf{x}ds\label{lastvisceps}
\end{align}
and similarly
\begin{align}
&\int_0^T\int_\Omega\zeta \eta \mathbb{S}:(\nabla\Delta^{-1}\nabla)[\xi \rho]d\mathbf{x}ds\nonumber\\
&\hspace{15mm}=\int_0^T\int_\Omega\zeta \xi \eta(2\mu+\lambda)\text{div}\mathbf{u}\hspace{1mm}\rho d\mathbf{x}ds\nonumber\\
&\hspace{25mm}-\int_0^T\int_\Omega 2\mu \zeta\xi \rho[(\nabla\Delta^{-2}\nabla):(\mathbf{u}\otimes\nabla\eta)-\mathbf{u}\cdot\nabla\eta]d\mathbf{x}ds\label{lastvisc}
\end{align}

Taking \eqref{rhoepstohocont} into account, we see that the last integral on the right hand side of \eqref{lastvisceps} converges to the last integral in the right hand side of \eqref{lastvisc}. This and \eqref{lastvisclim} imply \eqref{effviscflux}, which concludes the proof.
\end{proof}

\subsection{Strong convergence of densities, renormalized solutions}\label{subsecstrongconv}

Using the results above we can show strong convergence of densities, essentially, in the same way as in \cite[Section 7.4.3]{Fe}. For this, we need to show first that the limit functions $\rho$ and $\mathbf{u}$ solve the continuity equation in the sense of renormalized solutions, that is, they satisfy \eqref{rhoeps0} in the sense of distributions, and more generally, \eqref{renormalized}, \eqref{renormB} and \eqref{renormBb}.

\begin{remark}\label{bnotbound}
The function $b$ in the definition of renormalized solutions does not have to be bounded. Indeed, provided that $\rho\in L^\infty(0,T;L^\gamma(\Omega))$ and $\mathbf{u}\in L^2(0,T;H_0^1(\Omega))$, by Lebesgue's dominated convergence theorem it can be shown that \eqref{renormalized} also holds for $b\in C[0,\infty)$ satisfying
\begin{equation}
|b'(z)z|\leq cz^{\gamma/2}, \text{ for } z \text{ larger than some positive constant }z_0.
\end{equation}
\end{remark}

Now, the fact that $\rho$ and $\mathbf{u}$ solve \eqref{rhoeps0} in the sense of renormalized solutions is a direct consequence of the following general result (cf. \cite[Corollary 4.1]{Fe})

\begin{lemma}\label{lemmarenormrhoL2}
Let $\Omega \subseteq \mathbb{R}^N$ be an arbitrary domain. Let,
\[
\rho\in L^2(\Omega\times(0,T))
\]
solve the continuity equation \eqref{rhoeps0} in the sense of distributions with
\[
\mathbf{u}\in L^2(0,T;H_0^1(\Omega)).
\]

Then, $\rho$ is a renormalized solution of \eqref{rhoeps0} on $\Omega\times (0,T)$.
\end{lemma}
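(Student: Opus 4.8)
The plan is to regularize the continuity equation in the spatial variable and to exploit the Friedrichs-type commutator estimate of DiPerna and Lions. First I would apply the mollifier $[\,\cdot\,]_\mathbf{x}^\omega$ of \eqref{mollifier} to \eqref{rhoeps0}, which holds in the sense of distributions on $\Omega\times(0,T)$. Writing $\rho_\omega:=[\rho]_\mathbf{x}^\omega$ and arguing on compact subsets of $\Omega$ (so that no boundary term appears, consistent with the fact that the renormalized identity is only asserted distributionally), we obtain
\begin{equation*}
\partial_t\rho_\omega+\text{div}(\rho_\omega\mathbf{u})=r_\omega,\qquad r_\omega:=\text{div}(\rho_\omega\mathbf{u})-[\text{div}(\rho\mathbf{u})]_\mathbf{x}^\omega .
\end{equation*}
Since $\rho_\omega$ is smooth in $\mathbf{x}$ and $\partial_t\rho_\omega=r_\omega-\text{div}(\rho_\omega\mathbf{u})\in L^1_{loc}$, this identity is classical in $\mathbf{x}$ and admits an absolutely continuous representative in $t$, which legitimizes the chain-rule manipulations below.

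The crucial step, and the main obstacle, is the commutator lemma: $r_\omega\to0$ in $L^1_{loc}(\Omega\times(0,T))$ as $\omega\to0$. A direct integration by parts in $\mathbf{z}$, using $\nabla_\mathbf{z}\vartheta_\omega(\mathbf{x}-\mathbf{z})=-\nabla_\mathbf{x}\vartheta_\omega(\mathbf{x}-\mathbf{z})$, yields the explicit representation
\begin{equation*}
r_\omega(\mathbf{x},t)=\int_{\mathbb{R}^2}\rho(\mathbf{z},t)\big(\mathbf{u}(\mathbf{x},t)-\mathbf{u}(\mathbf{z},t)\big)\cdot\nabla_\mathbf{x}\vartheta_\omega(\mathbf{x}-\mathbf{z})\,d\mathbf{z}+\rho_\omega(\mathbf{x},t)\,\text{div}\,\mathbf{u}(\mathbf{x},t).
\end{equation*}
Here one uses precisely that $\rho\in L^2(\Omega\times(0,T))$ and $\mathbf{u}\in L^2(0,T;H_0^1(\Omega))$, so that with $p=q=2$ one has $\tfrac1p+\tfrac1q=1$ and the difference-quotient bound $\|\mathbf{u}(\cdot+\mathbf{h})-\mathbf{u}\|_{L^2}\le |\mathbf{h}|\,\|\nabla\mathbf{u}\|_{L^2}$ valid for $H^1$ fields. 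A rescaling $\mathbf{z}=\mathbf{x}-\omega\mathbf{w}$ together with the normalization $\int\vartheta=1$ shows that the first integral converges to $-\rho\,\text{div}\,\mathbf{u}$ in $L^1_{loc}$, cancelling the second term; the uniform $L^1_{loc}$ bound and convergence are obtained by density, approximating $\mathbf{u}$ by smooth fields. This is the standard DiPerna--Lions argument adapted to the planar bounded domain.

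Granting the commutator lemma, I would multiply the mollified equation by $B'(\rho_\omega)$, which is permissible since $\rho_\omega$ is spatially smooth. With $b$ given by \eqref{renormBb} this produces
\begin{equation*}
\partial_t B(\rho_\omega)+\text{div}(B(\rho_\omega)\mathbf{u})+b(\rho_\omega)\,\text{div}\,\mathbf{u}=B'(\rho_\omega)\,r_\omega .
\end{equation*}
Passing to the limit $\omega\to0$: along a subsequence $\rho_\omega\to\rho$ a.e. and in $L^2_{loc}$, so by continuity and the admissible growth of $B$ and $b$ the three terms on the left converge to their analogues with $\rho$ in the sense of distributions; and when $B'$ is bounded, $B'(\rho_\omega)\,r_\omega\to0$ in $L^1_{loc}$ because $r_\omega\to0$ there. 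This establishes \eqref{renormalized} for every $B$ with bounded derivative.

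Finally, to reach the general class \eqref{renormB}--\eqref{renormBb} (and the extension recorded in Remark \ref{bnotbound}), I would truncate. Given an admissible $B$, define $B_k$ with $B_k'=B'$ on $[0,k]$ and $B_k'$ bounded, apply the previous step to each $B_k$, and let $k\to\infty$, using $\rho\in L^\infty(0,T;L^\gamma(\Omega))$, $\mathbf{u}\in L^2(0,T;H_0^1(\Omega))$, and Lebesgue dominated convergence to control $B_k(\rho)$, $B_k(\rho)\mathbf{u}$ and $b_k(\rho)\,\text{div}\,\mathbf{u}$. The last term is the delicate one: its convergence requires the growth restriction $|b'(z)z|\le cz^{\gamma/2}$ of Remark \ref{bnotbound}, which guarantees $b(\rho)\,\text{div}\,\mathbf{u}\in L^1$ via Hölder. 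This yields the renormalized identity for the full admissible class and completes the proof.
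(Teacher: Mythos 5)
Your proposal is correct and is essentially the paper's own argument: the paper's proof of Lemma \ref{lemmarenormrhoL2} is precisely this mollification scheme --- apply $[\cdot]_\mathbf{x}^\omega$ to \eqref{rhoeps0}, multiply by $B'$, and let $\omega\to 0$ --- with the details (the DiPerna--Lions/Friedrichs commutator estimate with $p=q=2$, and the truncation needed to reach the full class \eqref{renormB} and Remark \ref{bnotbound}) delegated to Feireisl's Corollary 4.1, which you have simply written out. One caveat worth recording: by localizing to compact subsets of $\Omega$ you prove the lemma exactly as stated but never use the zero-trace hypothesis $\mathbf{u}\in L^2(0,T;H_0^1(\Omega))$; that hypothesis is what permits prolonging $\rho$ and $\mathbf{u}$ by zero so that the renormalized equation holds in the sense of distributions on all of $\mathbb{R}^2\times(0,T)$, which is the form the paper actually uses afterwards (it tests \eqref{rhologrho} against $\eta$ with $\eta|_\Omega=1$), so for that downstream application your argument should be supplemented by the standard extension-by-zero step.
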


This result follows by applying the the regularizing operator $v\to [v]_\mathbf{x}^\omega$ given by \eqref{mollifier} (that is, taking the functions $\vartheta_\omega$ as test functions) to equation \eqref{rhoeps0}, multiplying by $B'(\rho)$ and taking the limit as $\omega\to 0$, wherein the convergence is justified by the integrability properties of $\rho$ and $\mathbf{u}$ assumed as hypotheses. We omit the details.

Coming back to our present situation, as $\beta>2$ and by virtue of \eqref{rhoepstorho} and \eqref{uepstou} we can apply directly this result in order to conclude that $\rho$ and $\mathbf{u}$ indeed satisfy \eqref{renormalized}.

In particular, in view of Remark \ref{bnotbound} and using the fact that $\rho \in L^\infty(0,T;L^\beta(\Omega))$ we can choose $B(z)=z\log(z)$ in \eqref{renormalized} to conclude that the following equation is satisfied in the sense of distributions on $\mathbb{R}^2\times\Omega$:
\begin{equation}
(\rho\log(\rho))_t + \text{div}(\rho\log(\rho)\mathbf{u})+\rho\text{div}\mathbf{u}=0.\label{rhologrho}
\end{equation}

On the other hand, as $\rho_\varepsilon$ satisfies \eqref{appE2rho} a.e. on $\Omega\times(0,T)$, we can multiply \eqref{appE2rho} by $B'(\rho_\varepsilon)$ to obtain
\begin{align}
&B(\rho_\varepsilon)_t + \text{div}(B(\rho_\varepsilon)\mathbf{u}_\varepsilon)+\big( B'(\rho_\varepsilon)\rho_\varepsilon - B(\rho_\varepsilon) \big)\text{div}\mathbf{u}_\varepsilon \nonumber\\
&\hspace{30mm}= \varepsilon\text{div}(\mathbbm{1}_\Omega\nabla B(\rho_\varepsilon))-\varepsilon\mathbbm{1}_\Omega B''(\rho_\varepsilon)|\nabla\rho_\varepsilon|^2,
\end{align}
for any function $B\in C^2(\Omega)$ such that $B(0)=0$ with $B'$ and $B''$ uniformly bounded.

Accordingly, if $B$ is convex, and taking into account the boundary conditions \eqref{appE2bound}, we have
\[
\int_0^T \int_\Omega \zeta\big(B'(\rho_\varepsilon)\rho_\varepsilon-B(\rho_\varepsilon) \big) \text{div}\mathbf{u}d\mathbf{x}ds\leq \int_\Omega B(\rho_0)d\mathbf{x}+\int_0^T\int_\Omega \zeta_t B(\rho_\varepsilon)d\mathbf{x}ds,
\]
for any $\zeta\in C^\infty[0,T]$ with $\zeta(0)=1$ and $\zeta(T)=0$. 

Approximating the function $z\to z\log(z)$ by a sequence of convex functions $B$ as above we conclude that
\[
\int_0^T \int_\Omega \zeta\rho_\varepsilon \text{div}\mathbf{u}d\mathbf{x}ds\leq \int_\Omega \rho_0\log(\rho_0)d\mathbf{x}+\int_0^T\int_\Omega \zeta_t \rho_\varepsilon\log(\rho_\varepsilon) d\mathbf{x}ds.
\]

Taking the limit as $\varepsilon\to 0$ we obtain
\[
\int_0^T \int_\Omega \zeta\overline{\rho \text{div}\mathbf{u}}d\mathbf{x}ds\leq \int_\Omega \rho_0\log(\rho_0)d\mathbf{x}+\int_0^T\int_\Omega \zeta_t \overline{\rho\log(\rho)} d\mathbf{x}ds,
\]
where, as before, the over line stands for a weak limit of the sequence indexed by $\varepsilon$. In particular, by \eqref{rhoepstorho}, we can assume that $\rho_\varepsilon\log(\rho_\varepsilon)\to\overline{\rho\log(\rho)}$ weakly in $L^\infty(0,T;L^q(\Omega))$ for any $q<\beta$. As a consequence, 
\begin{equation}
\int_0^t\int_\Omega\overline{\rho \text{div}\mathbf{u}}d\mathbf{x}ds\leq \int_\Omega \rho_0\log(\rho_0)d\mathbf{x}+\int_\Omega  \overline{\rho\log(\rho)}(t) d\mathbf{x},\label{srong1}
\end{equation}
for any Lebesgue point $t$ of the function $\overline{\rho\log(\rho)}$.

Similarly, using a test function $\varphi(\mathbf{x},t)=\zeta(t)\eta(\mathbf{x})$ in \eqref{rhologrho}, where $\zeta$ and $\eta$ are smooth and $\zeta\geq 0$, $\eta\geq 0$, $\eta|_\Omega=1$, we obtain
\begin{equation}
\int_0^t\int_\Omega \rho \text{div}\mathbf{u} d\mathbf{x}ds = \int_\Omega \rho_0\log(\rho_0) d\mathbf{x} - \int_\Omega \rho\log(\rho)(t)d\mathbf{x},\label{strong2}
\end{equation}
for $t\in [0,T]$. Thus, from \eqref{srong1} and \eqref{strong2} we find the inequality
\begin{equation}
\int_\Omega \big(\overline{\rho\log(\rho)} - \rho\log(\rho)\big)(t) d\mathbf{x} \leq \int_0^t\int_\Omega \big( \rho \text{div}\mathbf{u} - \overline{\rho \text{div}\mathbf{u}}\big)  d\mathbf{x}ds,\label{rhologrhoesp}
\end{equation}
for a.e. $t\in[0,T]$.

Using Lemma \ref{lemmaeffvisc} we see that
\[
\int_O \big( \overline{\rho \text{div}\mathbf{u}} - \rho \text{div}\mathbf{u}\big)  d\mathbf{x}ds\geq \frac{1}{\lambda+2\mu}\liminf_{\varepsilon\to 0}\int_O \big( (a\rho_\varepsilon^{\gamma+1}+\delta\rho_\varepsilon^{\beta+1}) -\overline{p}\big)\rho d\mathbf{x}ds,
\]
for any compact $O\subseteq\Omega\times(0,T)$. Recall that 
\[
\overline{p}=a\overline{\rho^{\gamma}}+\delta\overline{\rho^{\beta}}.
\]

Now, as the function $z\to z^\beta$ is increasing we have 
\begin{align*}
\rho_\varepsilon^{\beta+1}-\overline{\rho^\beta}\hspace{1mm}\rho&=(\rho_\varepsilon^\beta-\rho^\beta)(\rho_\varepsilon-\rho)+\rho^\beta(\rho_\varepsilon-\rho)+(\rho_\varepsilon^\beta-\overline{\rho^\beta})\rho \\
&\geq \rho^\beta(\rho_\varepsilon-\rho) + (\rho_\varepsilon^\beta-\overline{\rho^\beta})\rho.
\end{align*}

Moreover, by virtue of Lemma \ref{rhobeta+1} we have that
\begin{align*}
&\rho_\varepsilon\to \rho \text{ weakly in } L^{\beta+1}(O),
&\rho_\varepsilon^\beta \to \overline{\rho^\beta} \text{ weakly in }L^{(\beta+1)/\beta}, 
\end{align*}
as $\varepsilon \to 0$. Thus, we conclude that
\begin{equation}
\liminf_{\varepsilon\to 0}\int_O \big(\delta\rho_\varepsilon^{\beta+1} -\delta\overline{\rho^\beta}\hspace{1mm}\rho\big) d\mathbf{x}ds \geq 0.
\end{equation}

By the same token, we have that
\begin{equation}
\liminf_{\varepsilon\to 0}\int_O \big(a\rho_\varepsilon^{\gamma+1} -a\overline{\rho^\gamma}\hspace{1mm}\rho\big) d\mathbf{x}ds \geq 0,
\end{equation}
and consequently, from \eqref{rhologrhoesp} we get
\begin{equation}
\int_\Omega \big(\overline{\rho\log(\rho)} - \rho\log(\rho)\big)(t) d\mathbf{x} \leq 0,\label{stongepsrhologrho}
\end{equation}
for a.e. $t$.

Finally, using Lemma \ref{teo2.11} we conclude that
\[
\overline{\rho\log(\rho)}=\rho\log(\rho),
\]
which, is equivalent to the strong convergence
\begin{equation}
\rho_\varepsilon \to \rho \text{ in }L^1(\Omega\times(0,T)) \text{ and a.e.}.
\end{equation}

In fact, by applying Lemma \ref{coro2.1Fe} we have \eqref{rhoepstorhoCL1}. 

In particular, we have that \eqref{rhoepsstrongtorho} in the sense of distributions.

\subsection{Conclusion}

With the strong convergence of the densities we have that all the nonlinearities present in the continuity and in the momentum equations are accounted for. Taking into account \eqref{rhoepstorho}-\eqref{alphacoupl} and also \eqref{rhoepstorhoCL1} and \eqref{rhoepsstrongtorho} we conclude that the limit functions $\rho$, $\mathbf{u}$, $\mathbf{H}$ and $\psi$ solve the  decoupled limit system \eqref{deltaE2rho}--\eqref{deltaE2Scho} with initial and boundary conditions \eqref{appE20} and \eqref{deltabound}, respectively, and we have proved Theorem~\ref{teoeps0}.

Let us recall that the regularized system \eqref{appE2rho}-\eqref{appE2Scho} was proposed as a regularized Short Wave-Long Wave interaction between the MHD System and the nonlinear Schr\"{o}dinger equation. Due to the lack of regularity of solutions, and in particular, due to the possible occurrence of vacuum in finite time, the Short Wave-Long Wave interactions could not be made in a straightforward way, as the Lagrangian transformation becomes singular in the presence of vacuum. To work around these difficulties we defined the Lagrangian coordinate through a smooth approximation $\mathbf{u}_N$ of the velocity field of the fluid, given by \eqref{uN}, and accordingly, by considering the limit as $N\to \infty$ satisfying \eqref{epsalphN}, Theorem \ref{teoeps0} serves the purpose to legitimize the coordinates of the limiting Schr\"{o}dinger equation to be considered as the Lagrangian coordinate in a generalized sense.

In short, we  have produced a finite-energy renormalized weak solution of the two dimensional MHD equations as a limit of solutions of the regularized Short Wave-Long Wave interactions.

Of course, there is one step left to complete the analysis, which consists in analysing the limit as $\delta\to 0$. Although the techniques are similar to those contained in this Section, there are a lot of limitations that have to be dealt with as we loose uniform boundedness of the sequence of densities in the space $L^\infty(0,T;L^\beta(\Omega))$. In particular, Lemma~\ref{lemmarenormrhoL2} can no longer be applied as we do not know, a priori, whether $\rho\in L^2(\Omega\times(0,T))$. Let us recall that $\beta$ was chosen conveniently large in order to justify the analysis developed.

Fortunately, we are now dealing with the {\em decoupled} system involving the two dimensional MHD equations and the nonlinear Schr\"{o}dinger equation, and the arguments in Section 5 of \cite{HW} can be followed literally line by line in order to justify the passing to the limit as $\delta\to 0$ in equations \eqref{deltaE2rho}-\eqref{deltaE2H'}. Finally, a simple application of Aubin-Lions Lemma (Lemma~\ref{AubinLions}) yields compactness of the sequence of solutions of \eqref{deltaE2Scho} as $\delta\to 0$.

In order to conclude we dedicate the following Section to quickly describe the passage to the limit as $\delta\to 0$ as in \cite[Section 5]{HW}.

\section{Vanishing artificial pressure}\label{limit2}

In the interest of analyzing the limit as $\delta\to 0$ we consider the limit problem \eqref{finalrho}--\eqref{finalSch} subject to initial and boundary conditions \eqref{final0} and \eqref{finalbound}.

Recall that we assume the initial data to be smooth in order to carry out the Faedo-Galerkin method from Section \ref{firstapp}. This constraint may be removed and we can consider more general initial data by means of approximation by smooth functions.

For system \eqref{finalrho}-\eqref{finalSch} above we consider initial data in \eqref{final0} satisfying \eqref{condsfinal0}.

Accordingly, we consider a sequence of approximate initial data denoted by $(\rho_{0\delta},\mathbf{u}_{0\delta},\mathbf{H}_{0\delta},\psi_{0\delta})$ such that
\begin{enumerate}
\item[(i)] \begin{align}
&\rho_{0\delta} \text{ is smooth and satisfies }\nabla\rho_{0\delta}\cdot\mathbf{n},\hspace{5mm} 0<\delta\leq \rho_{0\delta}\leq \delta^{-1/2\beta},\label{rhodelta-2beta}\\
&\rho_{0\delta}\to \rho_0 \text{ in }L^\gamma(\Omega), \hspace{5mm}|\{ x\in\Omega:\rho_{0\delta}<\rho_0 \}|\to 0,
\end{align}
as $\delta\to 0$.
\item[(ii)]\begin{equation}
\mathbf{m}_{0\delta}(\mathbf{x})=\begin{cases}
\mathbf{m}_0(\mathbf{x}), &\text{if }\rho_{0\delta}(\mathbf{x})\geq\rho_0(\mathbf{x}),\\
0, &\text{if }\rho_{0\delta}(\mathbf{x})<\rho_0(\mathbf{x}),
\end{cases}
\end{equation}
\item[(iii)] $\mathbf{H}_{0\delta}\to\mathbf{H}_0$ in $L^2(\Omega)$, and
\item[(iv)]$\psi_{0\delta}\to\psi_0$ in $H_0^1(\Omega)$.
\end{enumerate}

As aforementioned, once we have Theorem \ref{teoeps0}, the proof of Theorem \ref{teofinal} follows by repeating line by line the arguments in \cite[Section 5]{HW}. We refer to reader to \cite{HW} for the details.  The convergence of $\psi_\delta$ to a  solution of the cubic NLS follows trivially by Aubin-Lions lemma  (see \cite{JLi, Au}).


\begin{thebibliography}{99}

%\bibitem{AM} G.~ Alberti and S.~ M\"uller. {\sl A new approach to variational problems with multiple scales}. Comm.\ Pure Appl.\ Math.\ {\bf54} (2001), 761--825.

%\bibitem{AKM} S.N.~Antontsev, A.V.~Kazhikhov and V.M.~Monakhov. ``Boundary Value Problems in Mechanics of Nonhomogeneous Fluids''. Studies in Mathematics and Its Applications, Vol.~22, North-Holland, Amsterdam, 1990. 

\bibitem{Au}  J.-P. Aubin. {\sl Un théorème de compacité}. (French) C. R. Acad. Sci. Paris {\bf 256} (1963) 5042–5044.

%\bibitem{BOP1} D.~Bekiranov, T.~Ogawa and G.~Ponce. {\sl Weak solvability and well-posedness of a coupled Schrödinger-Korteweg De Vries equation for capillary-gravity %wave interactions}. Proceedings of the American Mathematical Society {\bf125}, No.~10  (1997), 2907--2919.

%\bibitem{BOP2} D.~Bekiranov, T.~Ogawa and G.~Ponce. {\sl Interaction equations for short and long dispersive waves}. 
%Journal of Functional Analysis {\bf 158} (1998), 357--388. 

%\bibitem{B} J.~Ball. {\sl A version of the fundamental theorem of Young measures}. In: PDE's and Continuum Models of Phase Transitions, pp.\ 207--215, Eds.\ M.~ Rascle, D.~Serre and M.~Slemrod. Lecture Notes of  Physics {\bf344}, Springer-Verlag, 1989.     

\bibitem{Be} D.J.~Benney. {\sl A general theory for interactions between short and long waves}. Studies in Applied Mathematics {\bf 56} (1977), 81--94. 

\bibitem{BrGa} H.~Brezis and T.~Gallouet. {\sl Nonlinear Schr\"{o}dinger evolution equations}, Nonlinear Analysis, TMS {\bf 4} (1980), 667-–681.

\bibitem{Ca} T.~Cazenave. ``Semilinear Schr\"odinger equations''. Courant Lecture Notes in Mathematics, {\bf 10}, New York, 2003.

\bibitem{CaW} T.~Cazenave and F.~B.~Weissler. {\sl The Cauchy problem for the nonlinear Schr\"{o}dinger equation in $H^1$}. Manuscripta Math.\ {\bf61} (1988), 477--494. 

%\bibitem{C} G.-Q.~ Chen. {\sl The compensated compactness method and the system of isentropic gas dynamics}. Lecture Notes, Preprint MSRI-00527-91, Berkeley, October 1990.

%\bibitem{C'} G.-Q.~Chen. {\sl Remarks on DiPerna's paper: ``Convergence of the viscosity method for the isentropic gas dynamics.''} [Comm. Math. Phys. 91 (1983), no. 1, 1–30]. Proc. Amer. Math. Soc. 125 (1997), no. 10, 2981–2986. 

%\bibitem{C''} G.-Q.~Chen. {\sl Convergence of the Lax-Friedrichs scheme for isentropic gas dynamics.} III. Acta Math. Sci. 6 (1986), no. 1, 75–120. Chinese translation: Convergence of the Lax-Friedrichs scheme for the system of equations of isentropic gas dynamics. III. Acta Math. Sci. (Chinese) 8 (1988), no. 3, 243–276.

%\bibitem{CG} G.-Q.~Chen and J.~Glimm. {\sl Global solutions to the compressible Euler equations with geometrical structures}. Comm.\ Math.\ Phys.\ {\bf180} (1) (1996), 153--193.

%\bibitem{CL} G.-Q.~Chen and Ph.~G.~LeFloch. {\sl Existence theory for the isentropic Euler equations}. Archive for Rational Mechanics and Analysis {\bf 166} (2003), 81--98.  

%\bibitem{CP} G.-Q.~Chen and M.~Perepelitsa. {\sl Vanishing viscosity limit of the Navier-Stokes equations to the Euler equations for compressible fluid flow.} Communications on Pure and Applied Mathematics, Vol. LXIII, 1469 --1504 (2010).

%\bibitem{CW} G.-Q.~Chen and D.~Wang {\sl Global Solutions of Nonlinear Magnetohydrodynamics with Large Initial Data}, Journal of Differential Equations 182, 344–376 (2002).

%\bibitem{CW'} G.-Q.~Chen and D.~Wang {\sl Existence and continuous dependence of large solutions for the magnetohydrodynamic equations}, Z. Angew. Math. Phys. 54, 608–632 (2003).

\bibitem{DFr} J.P~Dias and H.~Frid. {\sl Short wave-long wave interactions for compressible Navier–Stokes equations}, SIAM J. Math. Anal., 43 (2011), pp. 764–787.

%\bibitem{DF} J.P.~Dias and M.~Figueira. {\sl Existence of weak solutions 
%for a quasilinear version of Benney equations}.  J.\ Hyperbolic Diff. Eq., {\bf 4} (2007), 555-563.

\bibitem{DFF}  J.P.~Dias, M.~Figueira and H.~Frid. {\sl Vanishing viscosity with short wave long wave interactions for systems of conservation laws}.   Arch.\ Ration.\ Mech.\ Anal.\  {\bf196}  (2010),  no. 3, 981--1010. 

%\bibitem{DC} X.X.~Ding, G.-Q.~Chen, P.Z.~Luo {\sl Convergence of the Lax-Friedrichs scheme for isentropic gas dynamics} I, II. Acta Math. Sci. 5 (1985), no. 4, 415–432, 433–472. Chinese translations: Convergence of the Lax-Friedrichs scheme for the system of equations of isentropic gas dynamics. I. Acta Math. Sci. (Chinese) 7 (1987), no. 4, 467–480. Convergence of the Lax-Friedrichs scheme for the system of equations of isentropic gas dynamics. II. Acta Math. Sci. (Chinese) 8 (1988), no. 1, 61–94. Convergence of the fractional step Lax-Friedrichs scheme and Godunov scheme for the isentropic system of gas dynamics. Comm. Math. Phys. 121 (1989), no. 1, 63–84.

%\bibitem{DP} R.J.~DiPerna. {\sl Convergence of the vanishing viscosity method for isentropic gas dynamics}. Communications in Mathematical Physics {\bf91} (1983), 1--30.

%\bibitem{DP1} R.J.~DiPerna. {\sl Convergence of approximate solutions to conservation laws}. Arch.\ Rational Mech.\ Anal.\ {\bf82} (1983), 27--70.

\bibitem{Ev} L.C.~Evans. ``Partial Differential Equations'', 2nd edition. Graduate Studies in Mathematics, 19. American Mathematical Society, Providence, RI, 2010.

\bibitem{Fe} E.~Feireisl. ``Dynamics of Viscous Compressible Fluids'', Oxford Lecture Series in Mathematics and its Applications, vol 26. Oxford University Press, Oxford, 2004.

\bibitem{FeNP} E.~Feireisl, A.~Novotny and H.~Petzeltov\'{a}. {\sl On the existence of weak solutions to the Navier-Stokes equations}. J. Math. Fluid. Mech. {\bf2} (2001), 358--392.

\bibitem{FeP} E.~Feireisl, and H.~Petzeltov\'{a}. {\sl Large time behaviour of solutions to the Navier Stokes equations of compressible flow}. Arch.\ Rational Mech.\ Anal.\ {\bf150} (1999), 77--96.

%\bibitem{Fr} H. Frid. ``Compacidade Compensada e Aplicações \`{A}s Leis de Conservação'', Lecture Notes for the 19\textsuperscript{th} Brazilian Colloquium of Mathematics (in Portuguese), IMPA (1993).

\bibitem{FrPZ} H.~Frid, R.~Pan and W.~Zhang. {\sl Global smooth solutions in R3 to short wave-long wave interactions systems for viscous compressible fluids}, SIAM J. Math. Anal., Vol. 46, No. 3 (2014), pp. 1946–1968.

\bibitem{FrJP}  H.~Frid, J.~Jia, R.~Pan. {\sl Global smooth solutions in R3 to short wave–long wave interactions in magnetohydrodynamics}, J. Differential Equations 262 (2017), no. 7, 4129–4173.

%\bibitem{GV} J.~Ginibre and G.~Velo. {\sl The global Cauchy problem for nonlinear Schrödinger equation revisited.} Ann.~Inst.~H.~Poincaré Anal.~Non Linéaire {\bf2} (1985),
%309--327.   

%\bibitem{Ho} D.~Hoff. {\sl Global solutions of the equations of one-dimensional compressible flow with large data and forces, and with differing end states}. Z.~angew.~Math.~Phys.~{\bf49} (1998), 774--785.

\bibitem{Ho} D.~Hoff. {\sl Discontinuous solutions of the Navier-Stokes equations for multidimensional flows of heat-conducting fluids}. Arch.\ Rational Mech.\ Anal.\ {\bf139} (1997), no.~4, 303--354.

\bibitem{HW} X.~Hu, D.~Wang. {\sl Global existence and large time behaviour of solutions to the three-dimensional equations of compressible magnetohydrodynamic flows}. Arch. Rational. Mech. Anal. {\bf 197} (2010), 203--238.

%\bibitem{Ka} Ya.~Kanel. {\sl On a model system of equations of one-dimensional gas motion}. Diff.~Urav.\ {\bf4} (1968), 721--734.

\bibitem{K} T.~Kato. {\sl On nonlinear Schr\"odinger equations},  Ann.\  Inst.\ H.\ Poincar\'e Phys.\ Th\'eor., {\bf 46} (1987), 113-129.

%\bibitem{Kz} V.~Kazhikhov, {sl Cauchy problem for viscous gas equations}, Siberian Math. J. {\bf 23} (1982), 44--49.

%\bibitem{KzSh} V.~Kazhikhov, V.~Shelukhin {sl Unique global solution with respect to time of initial-boundary-value problems for one dimensional equations of a viscous gas}, J. Appl. Math. Mech. {\bf 41} (1977), 273--282.

%\bibitem{KT} M.~Keel and T.~Tao. {\sl Endpoint Strichartz inequalities}. Amer.~J.~Math.~{\bf120} (1998), 955--980.

%\bibitem{Kr} S.N.~Kruzhkov. {\sl First order quasilinear equations in several independent variables}.
 %Math.\ USSR-Sb.\ {\bf10} (1970), 217--243.
 
\bibitem{LL} L.D.~ Landau and E.M.~ Lifschitz. ``Electrodynamics of Continuous Media'', 2nd edition. Pergamon Press, New York 1983.
 
\bibitem{LL1} L.D.~ Landau and E.M.~ Lifschitz. ``Fluid Mechanics''. 2nd Edition. Course of Theoretical Physics, Volume 6. Pergamon Press Ltd, Oxford, 1983.
 
%\bibitem{LW} Ph.~G.~LeFloch and M.~ Westdickenberg. {\sl Finite energy solution to the isentropic Euler equations with geometric effects}. J.~Math.\  Pures  Appl.\  {\bf 88} (2007), 389--429. 

%\bibitem{L} J.-L.~Lions. ``Quelques Méthodes de Résolution des Problèmes aux Limites Non Linéaires''. (French) Dunod; Gauthier-Villars, Paris 1969 xx+554 pp. 

%\bibitem{LM} J.-L.~Lions and E.~Magenes. ``Probl\`emes aux Limites Non Homog\`enes et Applications''. Vol.~1. Dunod, Paris, 1968. 

\bibitem{JLi} J.-L.~Lions, {\sl Quelques m\'{e}thodes de r\'{e}solution de pr\`{o}blemes aux limites non lin\'{e}aires}, Dunod, Gautier-Villars, Paris, 1969.

\bibitem{Li1} P.-L.~Lions. ``Mathematical Topics in Fluid Mechanics: Volume 1, Incompressible Models''. Oxford Lecture Series in Mathematics and its Applications {\bf3}. Oxford Science Publications, Clarendon Press, Oxford University Press, New York, 1996.

\bibitem{Li} P.-L.~Lions. ``Mathematical Topics in Fluid Mechanics: Volume 2, Compressible Models''. Oxford Lecture Series in Mathematics and its Applications {\bf10}. Oxford Science Publications, The Clarendon Press, Oxford University Press, New York, 1998.

%\bibitem{LPS} P.-L.~Lions, B.~ Perthame and P.E.~ Souganidis. {\sl Existence and stability of entropy solutions for the hyperbolic systems of isentropic gas dynamics in Eulerian and Lagrangian coordinates}. Comm.~Pure Appl.~ Math.\ {\bf49} (1996), 599--638.  

%\bibitem{LPT} P.~L.~Lions, P.~Perthame and E.~Tadmor. {\sl Kinetic formulation of the isentropic gas dynamics and $p$-systems.} Commun. Math. Physics {\bf 63} (1994), 415--431.

\bibitem{Mu} F.~ Murat. {\sl Compacit\'e par compensation}. Ann.\ Scuola Norm.\ Sup.\ Pisa Sci.\ Fis.\ Mat.\ {\bf5} (1978), 489--507. 

%\bibitem{Mu'}  F.~Murat. {\sl L'injection du c\^{o}ne positif de $H^{-1}$ dans $W^{-1,q}$ est compacte pour tout $q<2$}. J. Math. Pures Appl. (9) 60 (1981), no. 3, 309–322.
 
%\bibitem{Na} J.~Nash. {\sl Le probl\` eme de Cauchy pour les \' equations diff\' erentielles d'un fluide g\' en\' eral}. Bulletin de la S.\ M.\ F.\  {\bf90} (1962), 487--497. 
 
%\bibitem{PWe} M. H. Protter and H. F. Weinberger, ``Maximum Principles in Differential Equations". Prentice–Hall, Englewood Cliffs, NJ, 1967. 
 
%\bibitem{R} J. Rauch. ``Partial Differential Equations''. Graduate Texts in Mathematics, {\bf 128}, Springer-Verlag, New York, 1991.

\bibitem{SSIMOO} K.~ Sakaguchi, K.~Shiokawa, A.~Ieda, Y.~Miyoshi, Y.~Otsuka, T.~Ogawa, M.~Connors, E.F.~Donovan, F.J.~Rich. {\sl Simultaneous ground and satellite observations of an isolated proton arc at subauroral latitudes}. Journal of Geophysical Research, Vol.~112, A04202, 2007.

\bibitem{Se} I.~Segal. {\sl Nonlinear semigroups} Ann. Math. 78 (1963), 339-364.  

%\bibitem{SS} D.~Serre and J.~Shearer. {\sl Convergence with physical viscosity for nonlinear elasticity}. Preprint, 1994 (unpublished).

%\bibitem{Sh} V.~Shelukhin. {\sl A shear flow problem for the compressible Navier-Stokes equations}. Int.~J.~Non-Linear Mechanics {\bf33}(2) (1998), 247--257.

\bibitem{Si} J.~Simon. {\sl Compact sets in the space $L^p(0,T;B)$}. Ann. Mat. Pura Appl. (4) 146 (1987), 65–96.

%\bibitem{St} M.~Strichartz. {\sl Restrictions of Fourier transforms to quadratic surfaces and decay of solutions of wave equations.} Duke Math.~J.\ {\bf44} (1977), 705--714.


%\bibitem{Ya} K.~Yajima. {\sl Existence of solutions for Schrödinger evolution equations.} Comm.~Math.~Phys.~{\bf110} (1987), 415--426.

\bibitem{T} L.~Tartar. {\sl Compensated compactness and applications to partial differential equations.} Research Notes in Mathematics, Nonlinear Analysis and Mechanics, ed.~R.~J. Knops, vol.\ {\bf 4}, Pitman Press, New York, 1979, 136--211.

\bibitem{T'} L.~Tartar. {\sl The compensated compactness method applied to systems of conservation laws} Systems of nonlinear partial differential equations (Oxford, 1982), 263–285, NATO Adv. Sci. Inst. Ser. C Math. Phys. Sci., 111, Reidel, Dordrecht, 1983.

%\bibitem{TH1} M.~Tsutsumi and S.~Hatano. {\sl Well-posedness of the Cauchy problem for the long wave-short wave resonance equations}. Nonlinear Analysis, Theory, %Methods \& Applications, {\bf 22} (1994), No.~2, 155--171.


%\bibitem{TH2} M.~Tsutsumi and S.~Hatano. {\sl Well-posedness of the Cauchy problem for Benney's first equations of long wave short wave interactions}. Funkcialaj Ekvacioj, 
%{\bf37} (1994), 289--316. 
  
%\bibitem{W} D.~Wang. {\sl Large solutions to the initial-boundary value problem for planar magnetohydrodynamics}. SIAM J. Appl. Math. {\bf 63} (2003), 1424--1441. 
  
\end{thebibliography}
\end{document}